\documentclass[11pt]{article}
\usepackage{amssymb,amsmath,amsthm,amsfonts,mathrsfs}
\usepackage{subfigure,graphicx}
\usepackage{ulem}
\graphicspath{{pictureSimulation/}}
\usepackage{color}
\usepackage{verbatim}
\usepackage{float, tikz, array, caption}
\usetikzlibrary{shapes,arrows}

%\makeatletter\renewcommand{\@biblabel}[1]{}\makeatother
%去掉参考文献前面的标号

\textwidth=16.5cm \textheight=24cm
\def\disp{\displaystyle}

\oddsidemargin 0cm \headsep=-1.3cm \raggedbottom
\usepackage[CJKbookmarks, colorlinks, bookmarksnumbered=true,pdfstartview=FitH,linkcolor=black]{hyperref}
\theoremstyle{plain}
\newtheorem{theorem}{Theorem}[section]
\newtheorem{lemma}{Lemma}[section]

\newtheorem{corollary}{Corollary}[section]

\numberwithin{equation}{section}
\theoremstyle{definition}
%[section]

\newtheorem{remark}{Remark}[section]

\def\crr{\cr\noalign{\vskip2mm}}

\def\dref#1{(\ref{#1})}
\newcommand{\R}{{\mathbb R}}

\def\H{{\cal H}}

\begin{document}
\title{{\bf
%Output Feedback Stabilization   of Unstable  Heat Equation with   Unknown   Input Gain
  A  New Adaptive    Control Scheme    for  Unstable  Heat Equation with   Unknown   Control  Coefficient\footnote{\small This work is supported by the National Natural Science Foundation of China (No. 62273217, 12131008, 62373231) and the Fundamental Research Program of Shanxi Province (202203021223002).} }}
% A New Nonlinear Dynamic Feedback Approach to Disturbance Treatment for Infinite-Dimensional Systems.
\author{ Hongyinping Feng\footnote{\small  Corresponding author.
Email: fhyp@sxu.edu.cn.}  \ and\  \  Hai-Li Du
\\
{\it \small School of Mathematical Sciences}
{\it \small Shanxi University,  Taiyuan, Shanxi, 030006, P.R.
China}\\
 }
%\date{}

\maketitle
\begin{abstract}% Abstract of not more than 200 words.
In this paper, we develop a novel  and simple
 adaptive control scheme
  for  a one-dimensional   unstable  heat equation with unknown control coefficient.
 A   new state observer  is designed to estimate the system state, while    a new    update law  is devised to   estimate the
   reciprocal of the
 control coefficient.
In contrast with the conventional state observer   which is usually available for  all   controllers, the newly designed state observer
  depends on a special controller  factorization.   Very importantly,
  both  the unknown control coefficient and its corresponding  estimate do not appear in the state observer anymore.
Consequently, the   stabilization of
  the control plant   comes down to  the stabilization of
  the state observer.
 In this way, the obstacles caused  by the unknown control coefficient can be overcome thoroughly.
As an application,
the performance  output
  tracking
 % for the unstable heat equation with unknown control coefficient
  is also considered by the newly developed   approach. When the reference signal is persistently exciting,
  the reciprocal of the control
coefficient can be estimated effectively by the  designed       update law.
   All the aforementioned  results are  proved mathematically, although the resulting closed-loop system may be nonlinear. Some of them are  validated visually by numerical simulations.

\end{abstract}
\vspace{0.3cm}
\noindent {\bf Keywords:}~Adaptive control, unknown control coefficient, unstable heat equation,  observer.
\vspace{0.3cm}

 \section{Introduction}

Systems    with unknown control coefficient
are   ubiquitous in many fields of engineering,
 even for the infinite-dimensional system.
However,  compared with the   works  on the finite-dimensional system   in literature,  the problem caused by unknown  control coefficient   is rarely considered
for the   infinite-dimensional system.
% although it is very important and common in
%engineering applications.
To the best of our knowledge, the   estimation of unknown control coefficient  in unstable infinite-dimensional systems is still  a challenging problem until now.  It is quite difficult  to
 achieve the desired system performance when the
  control coefficient is unknown.
 In this  paper, we    present an adaptive approach to address  the  problem caused by unknown control coefficient
through the  following  benchmark unstable heat system:
 \begin{equation}\label{20237122035}
\left\{\begin{array}{l}
w_{t}(x,t)=  w_{xx}(x,t)  , \; x\in (0,1),
\; t> 0,\crr
 w_{x} (0,t)=-qw(0,t) ,\ \
w_{x}(1,t)=bu(t) ,  \   \  t\ge 0, \ \ q>0,\crr
 y (t)=  (w(0,t),w(1,t))   ,\ \ t\ge 0,
 \end{array}\right.
\end{equation}
where  $w $ is the system state,
%$c>0$ is represented the heat conductivity,
 $u$ is the  control input,
   $y $ is the measurement  output and   $b\neq0$ is the unknown control coefficient.
 The model \dref{20237122035} is a general one-dimensional  heat equation with boundary
convection. It depicts the flow of heat in a rod that is insulated
everywhere except the two ends.
   The convection at the left end is  proportional   to the temperature $w(0,t)$. It may destroy  the system stability.
   The   actuator with  control coefficient $b$ is installed on the right end $x=1$.
 The control  coefficient $b$      depends  usually  on
many factors such as   the specific heat capacity,   the area of cross-section,  the density as well as the thermal conductivity of   the rod.
   It may be unknown  in the practical applications.
  The   more details of physical modeling of heat equation can be found in \cite{Hahn2012}.

System \dref{20237122035}  is unstable provided $q>1$ \cite{Andrey4}.
%  has infinitely many poles  and one of the poles is unstable provided $q>1$.
 %  and is approximately equal to $q2$ as $q \to+\infty$.
The unstable  heat systems    have been extensively studied in recent years.
Some representative works can be found in \cite{Liuw}, \cite{backsteppingheat2004}, \cite{backsteppingheat2005},
\cite{Deutscher2},  \cite{Fengheat}  and the monograph \cite{Backsteppingbook}, where
 the  partial differential equation backstepping  method   has been used to cope with the
  unstable term.
    Although the method of backstepping is powerful,
  it relies   on the precise information
 on the control coefficient  which
may  be unknown in practical applications.
 Therefore, how to stabilize the infinite-dimensional systems,  including the unstable heat system \dref{20237122035},   with
  unknown control  coefficient  is still an unsolved  issue.

 In this paper, we will address the problems that is resulted in  the unknown control coefficient
 by the adaptive control approach.
 A new update law is  devised to estimate   the reciprocal of $b$
rather than   the control coefficient $b$  itself.
Meanwhile, a new  state observer,  that does not rely on the
  control coefficient and its estimation,      is designed to estimate the system state effectively.  We emphasize  that the
   state estimation    does not depend  on the convergence of
 control coefficient estimation.
%Thanks to our the state observer,
 As a consequence of this good characteristic of the state observer,
  %By designing an observer where the control coefficient dose not appear anymore, the
 a  new adaptive control scheme is  developed to stabilize the  unstable heat
equation \dref{20237122035} even if
  the  control coefficient $b$  is  unknown.
Additionally, if the controller meets the
so called persistent
excitation condition, the unknown control coefficient
can be estimated effectively   by
the newly designed  update law.
We   point out that the introduction of adaptive control approach  in the controller design for the  infinite-dimensional unstable system is   not our original creation.
In \cite{Krs2}, \cite{Andrey3}, \cite{Andrey4}     and the  references therein,
the adaptive control approach,
  together with the  backstepping technique, has been used successfully to
  deal with the heat equation with  unknown  unstable coefficient.

% One of the major obstacles in developing adaptive
%schemes for PDEs with unknown   control coefficient      is
%
%To overcome this obstacle,

%
%the absence of parametrized families of
%stabilizing controllers

%
%Note that the above works mainly relied on the precise information
%on the control coefficients

%  When the
% control coefficient is unknown in  a control system,
%the conventional approaches such as backstepping can not be used directly.
%
%1 the importance of input gain $b$
%
%2, Adaptive in PDE.

%Note that the above works mainly relied on the precise information
%on the control coefficients and the growth rate of nonlinearity
%during the control scheme. However, these information
%may always be unknown in practical applications, therefore how to
%cope with the global stabilization of nonlinear systems with these
%unknowns is a hot issue. To overcome this obstacle,

%We consider the system \dref{20237122035} in the state space $\H=L^2(0,1)$
%equipped with an inner product
%\begin{equation}\label{2023921954}
%\langle f,g\rangle_{\H}=\int_{0}^{1}f(x)g(x)dx,\ \ \forall\ f,g\in \H.
%\end{equation}

We proceed as follows. In Section  \ref{Obs}, we present the update law for the      reciprocal of $b$ and present the state observer for system \dref{20237122035}.
 An observer-based output  feedback stabilizer  is designed   in Section \ref{Obserstabilizer}.
As an application of the results obtained in  Sections \ref{Obs} and \ref{Obserstabilizer}, we consider the performance output tracking for system \dref{20237122035} in  Section \ref{Tracking}.
 The main results in
  Sections \ref{Obs}, \ref{Obserstabilizer} and \ref{Tracking} are
  proved strictly  in    Sections \ref{PfTh1},  \ref{PfTh2} and \ref{PfTh3}, respectively.
 The   mathematical proofs arranged in these sections
can be skipped if the reader is interested only in the design
procedure.
 Some numerical simulations are presented  in Section \ref{NumSim} to validate the theoretical results,
followed by some conclusions in Section \ref{Concluding}.
A  mathematical result  that
is less relevant to the feedback or observer design is  arranged
in Appendix.

For the sake of
simplicity, we drop the obvious temporal  and spatial domains in the
rest of the paper.
 Throughout the article,  $\H$  denotes  the Hilbert space
   $ L^2(0,1)$  which is equipped with an inner product
  \begin{equation}\label{2023962248}
\langle f,g\rangle_{\H}=\int_{0}^{1}f(x)g(x)dx,\ \ \forall\ f,g\in \H.
\end{equation}
  % For simplicity, we denote  $\|\cdot\|_{\infty}=\|\cdot\|_{L^{\infty}(0,\infty)}$.
We denote  the  set  of functions  $W$ as
  \begin{equation}\label{202473949}
W =\{f+g\ |\ f\in L^{\infty}(0,\infty), g\in L^{2}(0,\infty)\}.
  \end{equation}

\section{Observer  design}\label{Obs}
When $b$ is known,  the following controller can stabilize  system \dref{20237122035} exponentially
 by using   the method of  partial differential equation backstepping \cite{Backsteppingbook}:
 \begin{equation}\label{2023813835}
 \left.\begin{array}{l}
 \disp u(t)=-\frac{q+c_0}{b}\left[  w(1,t)+q \int_{0}^{1}e^{q(1-x)}w(x,t)dx\right],
 % \hat{w}_{t}(x,t)=\hat{w}_{xx}(x,t)   , \crr
% \hat{w}_{x} (0,t)=-q {w}(0,t),\  \
% \hat{w}_{x}(1,t)= u (t)+c_1 [w(1,t)-\hat{w}(1,t)] ,
 \end{array}
 \right.
 \end{equation}
  where $c_0$ is a positive  tuning parameter.
However, when $b$ is unknown, the situation becomes completely different.
  The unknown control coefficient  $b$ may  give  rise to  great difficulties  in
  %  both  the observer and
     the  controller   design.
For example, the conventional approach to
the controller design may be infeasible
 even if we have known the estimation
 \begin{equation}\label{2023719933}
 \hat{b}(t)\to b\ \ \mbox{as}\ \ t\to+\infty.
 \end{equation}
Actually,
 in terms of   \dref{2023719933} and \dref{2023813835},  the controller is designed naturally as
   \begin{equation}\label{2023719934}
 u(t)=-\frac{q+c_0}{\hat{b}(t)}\left[ w(1,t)+q \int_{0}^{1}e^{q(1-x)}w(x,t)dx\right].
 \end{equation}
  However,  the controller \dref{2023719934} is meaningless at time $t$  if  $\hat{b}(t)=0$ which may take place and is hard to avoid  in the convergence \dref{2023719933}.
  Moreover, the unknown  control coefficient  can also  affect the estimation/cancellation of the disturbance   in the well-known Active Disturbance Rejection Control \cite{FengAnnual} and the  solvability of
the regulator equations    in the Internal Model
Principle  \cite{Natarajan2016TAC}, \cite{PauLassi2016TAC} and \cite{PauLassi2017SIAM}.
In summary, we will encounter    great  difficulties
  in the presence of
    unknown control coefficient.

  In order to  overcome the obstacles  caused by  the  unknown control coefficient $b$, we  propose a novel strategy that   estimates     the reciprocal     of  $ b$ rather than $b$ itself.
  Additionally,
    we  factorize the controller  by
  \begin{equation}\label{2023719937}
 u(t)=\zeta(t)u_0(t),
 \end{equation}
where  $\zeta $ is used to compensate for the unknown $b$  and
$u_0 $ is a new controller.
 Both of them  will  be determined later.
  Thanks to the new controller type  \dref{2023719937}, the observer of system \dref{20237122035} can be designed   as
 \begin{equation}\label{20237122036}
\left\{\begin{array}{l}
% u(t)=\zeta(t)u_0(t),\crr
\hat{w}_{t}(x,t)=\hat{w}_{xx}(x,t)   ,
 \crr
 \hat{w}_{x} (0,t)=-q {w}(0,t),\  \
 %-c_0[w(0,t)-\hat{w}(0,t)] ,\ \ c_0>0,\crr
\hat{w}_{x}(1,t)= u_0(t)+c_1 [w(1,t)-\hat{w}(1,t)] ,   \crr
 \dot{\zeta}(t)= -\mbox{sgn}(b)[w(1,t)-\hat{w}(1,t)]u_0(t) ,
 \end{array}\right.
\end{equation}
 where $  c_1  $ is a positive      tuning gain. The observer \dref{20237122036} consists of two parts: the state observer, i.e., $\hat{w}$-subsystem,   and the update law for  $\zeta$.
  Please note that both  the unknown control coefficient $b$ and the newly designed  $\zeta$  do not   appear in the state observer   anymore.
 This  is
   precisely the subtlety of controller factorization  \dref{2023719937} and is
    very important for  the controller design.
   Let the observation error be
 \begin{equation}\label{20237122039frac1b}
 \tilde{w} (x,t)=w(x,t)-\hat{w}(x,t),\ \ \tilde{\zeta}(t)=\frac{1}{b}-  \zeta(t).
 \end{equation}
Then it is governed by
 \begin{equation}\label{20237122040frac1b}
\left\{\begin{array}{l}
\tilde{w}_{t}(x,t)=\tilde{w} _{xx}(x,t)   ,
 \crr
 \tilde{w} _{x} (0,t)= 0 ,\ \ \tilde{w} _{x}(1,t)=-b\tilde{\zeta}(t)u_0(t) -c_1\tilde{w} (1,t),    \crr
\dot{\tilde{\zeta}}(t)=  \mbox{sgn}(b) u_0(t)\tilde{w} (1,t).
 \end{array}\right.
\end{equation}
By   Lemmas \ref{Lm2023828938} and \ref{Lm2023811150} in Section \ref{PfTh1} below, we can  prove that the error system \dref{20237122040frac1b}
  is   asymptotically
   stable   for any $u_0\in  W$ and any initial state
  $(\tilde{w}(\cdot,0),\tilde{\zeta}(0))\in \H\times \R$.   Actually,
if we choose the  Lyapunov          function of error system \dref{20237122040frac1b}
as
\begin{equation}\label{202310142044}
 V(t)=\frac12\int_{0}^{1}\tilde{w}^2(x,t)dx+\frac{|b|}{2}\tilde{\zeta}^2(t),
\end{equation}
a       formal       computation will show  that $$\dot{V}(t)=-\int_{0}^{1}\tilde{w}^2_x(x,t)dx-c_1\tilde{w}^2(1,t)\leq0.$$
 The new design of  the update law   for  $\zeta$ in observer \dref{20237122036} is based on the  Lyapunov          function \dref{202310142044}.

  %We consider the observer \dref{20237122036} in the Hilbert space $\H=L^2(0,1)\times \R$.
  \begin{theorem}\label{Th20238281955}
Let $q>0$, $b\neq 0$,  $c_1>0$ and  $u_0\in W $. Suppose that the sign of $b$ is known. Then the observer \dref{20237122036} of system \dref{20237122035}
is well-posed: for any initial state $(w(\cdot,0),\hat{w}(\cdot,0),\zeta(0))\in \H^2\times \R$,
the observer  \dref{20237122036} admits a unique solution
$(\hat{w}(\cdot,t), {\zeta}(t))\in C(0,\infty;\H\times\R)$
such that
 \begin{equation}\label{20238282000}
 |{\zeta} (t) -\zeta_0|+\|w(\cdot,t)-\hat{w}(\cdot,t)\|_{\H} \to 0 \ \ \mbox{as}\ \ t\to+\infty,
 \end{equation}
%and
%\begin{equation}\label{20238282001}
% {\zeta} (t)  \to \zeta_0 \ \ \mbox{as}\ \ t\to+\infty ,
% \end{equation}
where $\zeta_0$ is a constant that may not be $\frac1b$.
If we suppose further  that the controller $u_0$ meets the following  persistent excitation condition:
 there exists a time $\tau>0$ such that
\begin{equation}\label{202381809*}
      \lim_{t\to+\infty}\int_{t}^{t+\tau}
       u_0 (s) ds \neq0,
\end{equation}
 then
\begin{equation}\label{20238282005}
   \zeta (t)  \to  \frac1b \ \ \mbox{as}\ \ t\to+\infty .
 \end{equation}

\end{theorem}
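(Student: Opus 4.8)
The plan is to route everything through the change of variables \dref{20237122039frac1b}, which decouples the closed loop: the pair $(\tilde w,\tilde\zeta)$ obeys the autonomous system \dref{20237122040frac1b}, seeing $w$ and $\hat w$ only via the prescribed signal $u_0$. First I would apply Lemma \ref{Lm2023828938} to get a unique global solution $(\tilde w(\cdot,t),\tilde\zeta(t))\in C(0,\infty;\H\times\R)$ of \dref{20237122040frac1b}, and Lemma \ref{Lm2023811150} to conclude $\|\tilde w(\cdot,t)\|_{\H}\to0$ and $\tilde\zeta(t)\to\tilde\zeta_\infty$ for some constant $\tilde\zeta_\infty$. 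Given $(\tilde w,\tilde\zeta)$, the function $\zeta(t)=\frac1b-\tilde\zeta(t)$ is continuous and bounded, so $u(t)=\zeta(t)u_0(t)\in L^2_{\mathrm{loc}}(0,\infty)$ and the plant \dref{20237122035} — a heat equation with a homogeneous Robin condition at $x=0$ and an admissible Neumann boundary input at $x=1$ — has a unique solution $w\in C(0,\infty;\H)$; then $\hat w:=w-\tilde w\in C(0,\infty;\H)$ and $\zeta\in C(0,\infty;\R)$ solve \dref{20237122036} by the very construction of \dref{20237122040frac1b}, and uniqueness of this triple follows from uniqueness for \dref{20237122040frac1b} and for the plant. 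This is the asserted well-posedness, and since $\|w(\cdot,t)-\hat w(\cdot,t)\|_{\H}=\|\tilde w(\cdot,t)\|_{\H}\to0$ while $|\zeta(t)-\zeta_0|=|\tilde\zeta(t)-\tilde\zeta_\infty|\to0$ with $\zeta_0:=\frac1b-\tilde\zeta_\infty$, the convergence \dref{20238282000} holds; note $\zeta_0=\frac1b$ exactly when $\tilde\zeta_\infty=0$, so in general $\zeta_0$ need not equal $\frac1b$.

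For the persistent-excitation claim the idea is that \dref{202381809*} forces $\tilde\zeta_\infty=0$. Testing $\tilde w_t=\tilde w_{xx}$ against $1$ and using $\tilde w_x(0,t)=0$ together with the boundary condition in \dref{20237122040frac1b} yields
$$\frac{d}{dt}\int_0^1\tilde w(x,t)\,dx=\tilde w_x(1,t)=-b\tilde\zeta(t)u_0(t)-c_1\tilde w(1,t).$$
Integrating over $[t,t+\tau]$ and letting $t\to+\infty$: the left-hand side tends to $0$ because $\int_0^1\tilde w(x,t)\,dx=\langle\tilde w(\cdot,t),1\rangle_{\H}\to0$, and $\int_t^{t+\tau}\tilde w(1,s)\,ds\to0$ because integrating the identity $\dot V(t)=-\int_0^1\tilde w_x^2(x,t)\,dx-c_1\tilde w^2(1,t)\le0$ attached to \dref{202310142044} gives $\int_0^\infty\tilde w^2(1,s)\,ds\le V(0)/c_1<\infty$, whence $\int_t^{t+\tau}|\tilde w(1,s)|\,ds\le\sqrt\tau\,\big(\int_t^\infty\tilde w^2(1,s)\,ds\big)^{1/2}\to0$. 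Hence $\int_t^{t+\tau}\tilde\zeta(s)u_0(s)\,ds\to0$. Splitting $\tilde\zeta(s)=\tilde\zeta_\infty+(\tilde\zeta(s)-\tilde\zeta_\infty)$ and writing $u_0=f+g$ with $f\in L^\infty(0,\infty)$, $g\in L^2(0,\infty)$, the remainder obeys $\big|\int_t^{t+\tau}(\tilde\zeta(s)-\tilde\zeta_\infty)u_0(s)\,ds\big|\le\sup_{s\ge t}|\tilde\zeta(s)-\tilde\zeta_\infty|\,\big(\tau\|f\|_{L^\infty(0,\infty)}+\sqrt\tau\,\|g\|_{L^2(0,\infty)}\big)\to0$, so $\tilde\zeta_\infty\int_t^{t+\tau}u_0(s)\,ds\to0$. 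As \dref{202381809*} makes this limit nonzero, $\tilde\zeta_\infty=0$, i.e. $\zeta(t)=\frac1b-\tilde\zeta(t)\to\frac1b$, which is \dref{20238282005}.

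The real work is concentrated in the two cited lemmas rather than in the argument above: establishing global well-posedness and, above all, the asymptotic stability of the nonlinearly coupled parabolic-ODE system \dref{20237122040frac1b} for an arbitrary $u_0\in W$. The computation $\dot V\le0$ by itself delivers only boundedness of $(\tilde w,\tilde\zeta)$ together with square-integrability of $\tilde w_x$ and of $\tilde w(1,\cdot)$; upgrading this to $\|\tilde w(\cdot,t)\|_{\H}\to0$ and to convergence (not mere boundedness) of $\tilde\zeta$ will need a Barbalat/precompactness argument exploiting the smoothing of the heat semigroup, and this is the step I expect to be the main obstacle. Granting the lemmas, the only remaining care points are checking that $\zeta u_0$ is a genuine admissible boundary input for \dref{20237122035} and that the spatial mean $\int_0^1\tilde w(x,t)\,dx$ inherits the $\H$-convergence of $\tilde w$, both of which are routine.
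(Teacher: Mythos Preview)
Your proposal is correct and follows essentially the same route as the paper: pass to the error variables \dref{20237122039frac1b}, invoke Lemmas \ref{Lm2023828938} and \ref{Lm2023811150} for the autonomous system \dref{20237122040frac1b}, then reconstruct $(\hat w,\zeta)$ from $(\tilde w,\tilde\zeta)$ together with the plant solution driven by $u=\zeta u_0\in L^2_{\mathrm{loc}}$. The only noteworthy difference is in the persistent-excitation step: you test $\tilde w_t=\tilde w_{xx}$ against the constant function $1$, obtaining the moment identity $\frac{d}{dt}\int_0^1\tilde w\,dx=-b\tilde\zeta u_0-c_1\tilde w(1,\cdot)$, whereas the paper (inside the proof of Lemma \ref{Lm2023811150}) uses the multiplier $x$ via $\rho(t)=\int_0^1 x\,\tilde w(x,t)\,dx$, which produces an extra boundary term $\tilde w(0,t)$ that must then be controlled through \dref{20238291544}. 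Your choice is marginally cleaner and avoids that detour; in any event the PE conclusion is already packaged in Lemma \ref{Lm2023811150}, so you could also simply cite it a second time rather than rederive it.
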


%\begin{remark}\label{Re202310141756}
%The design of   update law  for the    reciprocal of control coefficient $b$ in the observer \dref{20237122036} is inspired by the
%standard idea of adaptive control for  finite-dimensional system.
%\end{remark}

\begin{remark}\label{Re20238282005}
By \dref{20237122035} and \dref{2023719937},  we have $w_x(1,t)=bu(t)=b\zeta(t)u_0(t)$.  Although
the observer \dref{20237122036} is well-posed,
the new control coefficient $b\zeta(t)$ of $u_0(t)$ remains unknown until estimation \dref{20238282005}.
Consequently, the design of $u_0$ may still remain challenging.
 This implies that the separation principle based on the observer   \dref{20237122036} cannot be directly applied to the output feedback design of system \dref{20237122035}, even if full state estimation \dref{20238282000} is available.
On the other hand, according to Theorem \ref{Th20238281955}, the state estimation in \dref{20238282000} holds true as long as $u_0 \in W $.  Consequently, we can stabilize the control plant  \dref{20237122035}
via stabilizing the state observer \dref{20237122036}.
   Given that the coefficient of controller $u_0$ in the state observer in \dref{20237122036}  always equals 1, stabilizing the state observer becomes straightforward and feasible. This feature is particularly significant for observer-based controller designs when the control coefficient $b$ is unknown.

\end{remark}

\begin{remark}\label{Re2023914}
By the  conventional definition  \cite{PE1987}, the signal $u_0$ satisfies the  persistent
excitation condition    means that
there exist  constants  $\tau $, $t_0 $ and $c_0$ such that
   \begin{equation}\label{202381917}
 \int_{t }^{t+\tau}|u_0(s)|^2ds
 \geq  c_0>0,\ \ \forall\ t> t_0\geq 0,\ \ \tau>0.
\end{equation}
Clearly, the  function $u_0$ that meets    \dref{202381917}    always meets
the  persistent
excitation condition \dref{202381809*}.
% On the other hand, since $L^2(0,\infty) \subset\!\!\!\!\!\!\!/ \  L^1(0,\infty)$, we can choose $\gamma\in L^2(0,\infty)$ such that $\gamma\notin L^1(0,\infty)$.
%Therefore,
%for any $\tau>0$, we can conclude that
% \begin{equation}\label{2023914929}
%\lim_{t\to+\infty} \int_{t }^{t+\tau}|\gamma(s)|^2ds=0.
%\end{equation}
%%which implies that $\gamma$ is not persistent excitation in the sense \dref{202381917}.
% However,
%owing to $\gamma\notin L^1(0,\infty)$, it follows from the Cauchy convergence criteria  that  there exists a $\tau_0>0$ such that
% \begin{equation}\label{2023914937}
%\lim_{t\to+\infty} \int_{t }^{t+\tau_0}|\gamma(s)| ds\neq0.
%\end{equation}
%If we set $u_0(t)=|\gamma(t)|$, then the function $u_0$ meets the our assumption \dref{202381809*} but does not meet    \dref{202381917}.
Therefore, our  assumption \dref{202381809*} is not stronger      than the conventional persistent
excitation condition \dref{202381917}.
\end{remark}

\section{Observer-based stabilizer}\label{Obserstabilizer}

This section is devoted to the observer-based stabilizer design for the system \dref{20237122035}.
Taking  the observation error \dref{20237122039frac1b} into account, the state observer in \dref{20237122036} can be rewritten as
   \begin{equation}\label{20238312133}
\left\{\begin{array}{l}
% u(t)=\zeta(t)u_0(t),\crr
\hat{w}_{t}(x,t)=\hat{w}_{xx}(x,t)   ,
 \crr
 \hat{w}_{x} (0,t)=-q \hat{w}(0,t)-q \tilde{w}(0,t),\crr
\hat{w}_{x}(1,t)= u_0(t)+c_1  \tilde{w}(1,t)  .
% \dot{\zeta}(t)= -\mbox{sgn}(b) \tilde{w}(1,t) u_0(t) ,
 \end{array}\right.
\end{equation}
 %where $  c_1>0$ is  the    tuning gain.
 By Theorem \ref{Th20238281955}, $  \|w(\cdot,t)-\hat{w}(\cdot,t)\|_{\H}  \to 0$  as $t\to+\infty$ provided $u_0\in W$.  In this case,  the
 stabilization of  system \dref{20237122035}  can come down to  stabilization of  system
 \dref{20238312133}.  Since  $\| \tilde{w}(\cdot,t)\|_{\H}  \to 0$   and the unknown control coefficient $b$ does not appear in the observer \dref{20238312133},
we can stabilize the observer \dref{20238312133} easily
  by   ignoring the
 $\tilde{w}$-parts. Indeed, in view of the feedback \dref{2023813835}, the  controller $u_0$ can be designed as
 \begin{equation}\label{20237122039}
u_0(t)= -(q+c_0)  \left[  \hat{w}(1,t)+q \int_{0}^{1}e^{q(1-x)}\hat{w}(x,t)dx\right],
 \end{equation}
under
 which we get the  closed-loop system of system \dref{20237122035}
\begin{equation}\label{2023720848}
\left\{\begin{array}{l}
w_{t}(x,t)=w_{xx}(x,t)  ,\crr
 w_{x} (0,t)=-qw(0,t) ,\crr
\disp w_{x}(1,t)=-b(q+c_0)\zeta(t) \left[  \hat{w}(1,t)+q \int_{0}^{1}e^{q(1-x)}\hat{w}(x,t)dx\right], \crr
  \hat{w}_{t}(x,t)=\hat{w}_{xx}(x,t)   ,\crr
 \hat{w}_{x} (0,t)=-q {w}(0,t),\crr
\disp  \hat{w}_{x}(1,t)= -(q+c_0) \left[  \hat{w}(1,t)+q \int_{0}^{1}e^{q(1-x)}\hat{w}(x,t)dx\right]+c_1 [w(1,t)-\hat{w}(1,t)]  , \crr
 \disp\dot{\zeta}(t)=  \mbox{sgn}(b)(q+c_0) [w(1,t)-\hat{w}(1,t)] \left[  \hat{w}(1,t)+q \int_{0}^{1}e^{q(1-x)}\hat{w}(x,t)dx\right]  .
 \end{array}\right.
\end{equation}
 % where $c_0$ and $c_1$ are positive tuning parameters.
%We consider the closed-loop system  \dref{2023720848}  in the Hilbert space %$\mathcal{X}=L^2(0,1)\times L^2(0,1)\times \R$ with inner product

 \begin{theorem}\label{th20237141510}
Let $q>0$, $b\neq 0$,  $c_0>0$ and $c_1>0$. Suppose that the sign of $b$ is known. Then, for any initial state
$(w(\cdot,0),\hat{w}(\cdot,0),\zeta(0))\in \H^2\times \R$,
the closed-loop system  \dref{2023720848}
  admits a weak  solution
$(w(\cdot,t), \hat{w}(\cdot,t), {\zeta}(t))\in C(0,\infty;\H^2\times\R)$ such that
  \begin{equation}\label{2023720844w}
 |\zeta(t)- \zeta_0|+\|(w(\cdot,t) ,\hat{w}(\cdot,t))\|_{\H^2} \to 0 \ \ \mbox{as}\ \ t\to+\infty,
 \end{equation}
%and
%\begin{equation}\label{2023720844zeta}
%%\sup_{t\geq0}| \zeta(t) |<+\infty.
%   \zeta(t)\to \zeta_0\ \ \mbox{as}\ \ t\to+\infty,
% \end{equation}
where $\zeta_0$ is a  constant that may not be $\frac1b$.

 \end{theorem}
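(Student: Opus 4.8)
The plan is to exploit the cascade structure hidden in the closed-loop system \eqref{2023720848}: the observation-error subsystem \eqref{20237122040frac1b} drives the state-observer subsystem \eqref{20238312133}, and the controller \eqref{20237122039} is exactly the backstepping feedback \eqref{2023813835} (with $b$ replaced by $1$) applied to the observer. The crucial point, already visible in \eqref{202310142044}, is that the energy identity for the error system holds \emph{regardless} of the choice of $u_0$ (the $u_0$-terms cancel in the computation of $\dot V$), so the bounds it provides are unconditional and may be fed into the observer subsystem without circular reasoning. Concretely, I would argue, in this order: (i) well-posedness of \eqref{2023720848}; (ii) from the Lyapunov functional $V$ in \eqref{202310142044}, the a priori bounds $\sup_{t\ge 0}\big(\|\tilde w(\cdot,t)\|_{\H}+|\tilde\zeta(t)|\big)<\infty$ and $\int_0^\infty\big(\|\tilde w_x(\cdot,t)\|_{\H}^2+c_1\tilde w^2(1,t)\big)\,dt<\infty$, hence, via $|\tilde w(0,t)|\le|\tilde w(1,t)|+\|\tilde w_x(\cdot,t)\|_{\H}$, also $\tilde w(0,\cdot),\tilde w(1,\cdot)\in L^2(0,\infty)$; (iii) exponential stability of the observer subsystem \eqref{20238312133} under \eqref{20237122039} with the traces $\tilde w(0,t),\tilde w(1,t)$ treated as $L^2(0,\infty)$ perturbations, giving $\|\hat w(\cdot,t)\|_{\H}\to 0$ and $\hat w\in L^2(0,\infty;H^1(0,1))$; (iv) consequently $u_0\in L^2(0,\infty)\subset W$, so Theorem \ref{Th20238281955} applies and yields $\|\tilde w(\cdot,t)\|_{\H}\to 0$ and $\zeta(t)\to\zeta_0$; (v) combining, $\|w(\cdot,t)\|_{\H}\le\|\hat w(\cdot,t)\|_{\H}+\|\tilde w(\cdot,t)\|_{\H}\to 0$, which together with $\zeta=\frac1b-\tilde\zeta$ and $w=\hat w+\tilde w$ gives \eqref{2023720844w}.

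For step (iii) I would introduce the Volterra backstepping transformation $\hat v(x,t)=\hat w(x,t)-\int_0^x k(x,s)\hat w(s,t)\,ds$ associated with the feedback \eqref{2023813835}, whose kernel $k$ is smooth and which is boundedly invertible on $\H$ and on $H^1(0,1)$. Under it, the observer system \eqref{20238312133} with $u_0$ given by \eqref{20237122039} becomes the exponentially stable target system $\hat v_t=\hat v_{xx}$, $\hat v_x(0,t)=0$, $\hat v_x(1,t)=-c_0\hat v(1,t)$, perturbed only by terms that are linear in the scalar traces $\tilde w(0,t)$ and $\tilde w(1,t)$ (an interior term $q\,k(x,0)\tilde w(0,t)$ and boundary terms $-q\tilde w(0,t)$ at $x=0$ and $c_1\tilde w(1,t)$ at $x=1$). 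Differentiating $E(t)=\tfrac12\|\hat v(\cdot,t)\|_{\H}^2$, integrating by parts, and using Young's inequality together with the Poincar\'e-type estimate $\|\hat v\|_{\H}^2\le C\big(\|\hat v_x\|_{\H}^2+\hat v^2(1)\big)$ gives $\dot E(t)\le-\alpha\|\hat v(\cdot,t)\|_{\H}^2+C\big(\tilde w^2(0,t)+\tilde w^2(1,t)\big)$ for some $\alpha>0$; since the forcing term lies in $L^1(0,\infty)$ by step (ii), a standard convolution/Barbalat argument yields $E(t)\to 0$ and $\int_0^\infty\big(\|\hat v\|_{\H}^2+\|\hat v_x\|_{\H}^2\big)\,dt<\infty$, and the same holds for $\hat w$. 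In particular $|\hat w(1,t)|\le\|\hat w(\cdot,t)\|_{\H}+\|\hat w_x(\cdot,t)\|_{\H}$ shows $\hat w(1,\cdot)\in L^2(0,\infty)$, so the feedback law \eqref{20237122039} indeed delivers $u_0\in L^2(0,\infty)$.

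For step (i), the closed-loop system \eqref{2023720848} is a semilinear PDE--ODE system whose only nonlinearities, $b\tilde\zeta(t)u_0(t)$ in the boundary condition for $\tilde w$ and $\mathrm{sgn}(b)u_0(t)\tilde w(1,t)$ in the $\tilde\zeta$-equation, are bilinear. I would write it as an abstract evolution equation on $\H\times\H\times\R$ in the variables $(\hat w,\tilde w,\tilde\zeta)$ whose linear part — the decoupled backstepping-stabilized observer for $\hat w$, the dissipative heat equation for $\tilde w$, the trivial $\tilde\zeta$-dynamics, with the admissible boundary couplings $c_1\tilde w(1,t)$ and the term proportional to $\tilde w(0,t)$ entering the $\hat w$-equation — generates a $C_0$-semigroup by well-posed linear system theory, and then obtain a local weak (mild) solution by a contraction mapping in $C([0,T];\H)\cap L^2(0,T;H^1(0,1))$ for small $T$. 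The a priori bounds from (ii)--(iii) rule out finite-time blow-up and promote the local solution to a global one in $C(0,\infty;\H^2\times\R)$; for rough initial data one first works with strong solutions of regularised problems and passes to the limit.

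The main obstacle is step (i) together with the bookkeeping that keeps the argument non-circular. The feedback $u_0$ depends on the trace $\hat w(1,t)$, which is an unbounded functional on $\H$, so both the semigroup generation (admissibility of this output feedback for the heat equation) and the Lipschitz estimates for the bilinear terms must be carried out in a space that controls $\hat w(1,t)$, such as $L^2(0,T;H^1(0,1))$. Once well-posedness is in hand, the asymptotic part is essentially the cascade estimate sketched above; the only genuinely delicate conceptual point is to recognise that the energy identity $\dot V(t)=-\|\tilde w_x(\cdot,t)\|_{\H}^2-c_1\tilde w^2(1,t)$ for \eqref{20237122040frac1b} is valid for \emph{every} admissible $u_0$, and hence may be invoked before one has verified $u_0\in W$ for this particular closed loop.
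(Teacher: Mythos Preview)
Your asymptotic argument is correct and follows the same cascade route as the paper: the unconditional energy identity for the error system \eqref{20237122040frac1b} yields $\tilde w(0,\cdot),\tilde w(1,\cdot)\in L^2(0,\infty)$ and bounded $\tilde\zeta$; the backstepping transformation $\Pi$ turns the observer into the exponentially stable target system perturbed by those traces; and then $u_0\in L^2(0,\infty)$ closes the loop via Theorem~\ref{Th20238281955}. The paper carries this out after first passing to the transformed variables $(\tilde w,\check w,\tilde\zeta)$ with $\check w=\Pi\hat w$ (system \eqref{20238292127}) and reads the $L^2$-integrability of $\check w(1,\cdot)$ directly from a \emph{combined} Lyapunov functional in the Galerkin limit (Lemma~\ref{Lm2023828938Ad92}), rather than obtaining it in two steps as you do; the order is then reversed in Lemma~\ref{Lm2023951644}: first $u_0\in L^2$, then error decay via Lemma~\ref{Lm2023811150}, then observer decay.

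The genuine methodological difference is in the existence step. The paper does not invoke semigroup or fixed-point arguments; it proves existence for the transformed nonlinear system \eqref{20238292127} by Galerkin approximation, introducing a scaling $z_N=\gamma\tilde w_N$, $\xi_N=\gamma^2\tilde\zeta_N$ to absorb the bilinear cross terms into the combined functional $L_N$ (equation \eqref{2023931622}), and then shows the approximations form Cauchy sequences. This buys existence and the key $L^2$ trace bound \eqref{2023951709} simultaneously, at the cost of a somewhat delicate computation. Your abstract route is cleaner conceptually but, as you note, requires handling the bilinear boundary term $b\tilde\zeta(t)u_0(t)$ (with $u_0$ itself a trace functional of $\hat w$) in a space like $L^2(0,T;H^1(0,1))$ where the relevant traces are controlled; this is doable, but the admissibility and Lipschitz estimates need to be written out carefully since the paper does not provide them.
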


%Since  we still  do not known whether or not  the PE condition \dref{202381809*}   is satisfied before the controller design, the unknown input gain still hard to  be  compensated.
%This means that the  observer-based controller design is still not straightforward
%even if we have known the full state feedback and the estimation of the full state.

 \begin{remark}\label{Re2023831}
 In Theorem \ref{th20237141510}, we have not verified the persistent excitation condition \dref{202381809*} for the function $u_0$ in \dref{20237122039}. In fact, the stability of the closed-loop system \dref{2023720848} always implies that the function $u_0$ is not persistently exciting. This   will be proved in Lemma \ref{Lm2023951644} and demonstrated visually  through numerical simulations in Section \ref{NumSim}.
\end{remark}

%\begin{remark}\label{Re202392}
%Applied to wave
%\end{remark}

\section{Performance output tracking}\label{Tracking}
In this section, we apply the theoretical results in Sections \ref{Obs} and \ref{Obserstabilizer} to the performance output tracking for system \dref{20237122035}.
In this way, the corresponding controller can  be   persistently
excited   and hence the unknown control coefficient $b$  can be estimated effectively.
We aim at   designing
a   feedback control such  that the performance output $w(0,t) $ satisfies
\begin{equation}\label{2023421108}
 \int_0^{\infty}| w(0,t) - r(t)|^2dt<+\infty,
  \end{equation}
where $r\in C^{\infty}[0,+\infty)$  is a given reference.
Performance output tracking for infinite-dimensional system has been  studied  extensively in recent years.   However, to the  best of our knowledge,
the result about     the infinite-dimensional systems with unknown control coefficient  is
still  fairly scarce.
All the    approaches  that have been used in  \cite{Natarajan2016TAC}, \cite{PauLassi2016TAC}, \cite{PauLassi2017SIAM},
\cite{GuoWAdaptiveAuto},   \cite{GuoZhaoRX2022}  and \cite{Guomeng}
rely on the  precise information on the control coefficient.
 Moreover,   in contrast with the  aforementioned results where the reference is assumed to be generated by an exosystem,
   the considered reference $r$   is more general and is non-collocated to the controller. This
  also brings some  difficulties to the controller design.

To achieve  the desired output tracking, we need to
  construct the servo dynamics.
Inspired by \cite{105} and \cite[Chapter 12]{Backsteppingbook}, we let
 \begin{equation}\label{2023829858}
 v(x,t)=\sum_{j=0}^{\infty}r^{(j)}(t)\left[\frac{x^{2j}}{(2j)!}-\frac{ qx ^{2j+1}}{(2j+1)!}\right], \ \ x\in[0,1],\ \ t\geq0.
 \end{equation}
% where
% \begin{equation}\label{2016571824}
% \left.\begin{array}{l}
% \disp v_1(x,t)=\sum_{j=0}^{\infty}r^{(j)}(t)\frac{x^{2j}}{(2j)!}
% \end{array}\right.
%\end{equation}
%and
% \begin{equation}\label{20194251113}
% \disp v_2(x,t)=\sum_{j=0}^{\infty}r^{(j)}(t)\frac{ qx ^{2j+1}}{(2j+1)!}.
% \end{equation}
  By a simple computation,
    the function $v$ is governed by
 \begin{equation}\label{20221131932}
 \left\{\begin{array}{l}
 \disp v_t(x,t)=v_{xx}(x,t),\crr
\disp v_x(0,t)=-qr(t),\ \ v(0,t)=r(t)
   \end{array}\right.
\end{equation}
and furthermore,
  \begin{equation}\label{2023828927}
v_x(1,t) =  -qr(t)-\sum_{j=1}^{\infty}r^{(j)}(t)\left[\frac{q}{(2j)!} -\frac{1}{(2j-1)!}\right].
\end{equation}
 If we let
 \begin{equation}\label{2023829902}
  z(x,t)=w(x,t)-v(x,t), \ \ x\in[0,1],\ \ t\geq0,
\end{equation}
then
\begin{equation}\label{2023829903}
  z(0,t)=w(0,t)-v(0,t)=w(0,t)-r(t), \   \ t\geq0,
\end{equation}
and hence
 \begin{equation}\label{20221131937}
 \left\{\begin{array}{l}
  \disp  z_t(x,t)=z_{xx}(x,t),   \crr
  \disp  z_x(0,t)=
    -qz(0,t) , \ z_x(1,t)=bu(t)-v_x(1,t).
 \end{array}\right.
\end{equation}
It
follows from \dref{2023829902}  and  \dref{2023829858} that
 \begin{equation}\label{2023828z1}
z(1,t) = w(1,t)-v(1,t)=w(1,t)- \sum_{j=0}^{\infty}r^{(j)}(t)\left[\frac{1}{(2j)!} -\frac{q }{(2j+1)!}  \right].
\end{equation}
Owing to \dref{2023829903}, we are able to achieve the output tracking \dref{2023421108} as long as we can stabilize system \dref{20221131937}.
Inspired by the observer and stabilizer  designs in Sections  \ref{Obs} and \ref{Obserstabilizer}, the observer of system \dref{20221131937} can be designed as
\begin{equation}\label{20238291031}
  \left\{\begin{array}{l}
  u(t)=\zeta(t)u_0(t),\crr
 \hat{z}_{t}(x,t)=\hat{z}_{xx}(x,t)   , \; x\in (0,1),
 \crr
 \hat{z}_{x} (0,t)=-q {z}(0,t),\  \
\hat{z}_{x}(1,t)= u_0(t)-v_x(1,t)+c_1 [z(1,t)-\hat{z}(1,t)] ,    \crr
 \dot{\zeta}(t)= -\mbox{sgn}(b)[z(1,t)-\hat{z}(1,t)]u_0(t) ,
        \end{array}\right.
\end{equation}
%Owing to \dref{2023829903}, the output tracking comes down to stabilize system \dref{20221131937}.
%Furthermore, we just need to stabilize the observer \dref{20238291031}, as considered in the
% Section \ref{Obserstabilizer}.
%If we can prove $\|z(\cdot,t)-\hat{z}(\cdot,t)\|_{\H}^2\to 0$ as $t\to+\infty$,
%Inspired by the stabilizer design in Section \ref{Obserstabilizer},
and the observer-based  controller is designed   as
  \begin{equation}\label{20221111734}
  \begin{array}{ll}
   u_0(t)&\disp =   \disp  -(q+c_0)\left[  \hat{z}(1,t)+q \int_{0}^{1}e^{q(1-x)}\hat{z}(x,t)dx\right]+ v_x(1,t),
  % &\disp =   \disp  -\frac{q+c_0}{b}\left[  z(1,t)+q \int_{0}^{1}e^{q(1-x)}z(x,t)dx\right]-\frac1b   \sum_{j=0}^{\infty}r^{(j)}(t)\left[\frac{q^{2j}}{(2j)!} +\frac{1}{(2j-1)!}+qr(t) \right] ,
        \end{array}
\end{equation}
where $c_0  $  and $c_1$ are positive     tuning parameters.
%Inspired by the stabilizer design in Section \ref{Obserstabilizer}, the controller with unknown input gain can be designed as
% \begin{equation}\label{2023828934}
%  \left\{\begin{array}{l}
%  u(t) =\zeta(t)u_0(t),\crr
%   u_0(t) \disp =   \disp  -(q+c_0)\left[  \hat{z}(1,t)+q \int_{0}^{1}e^{q(1-x)}\hat{z}(x,t)dx\right]-\frac1b v_x(1,t),\crr
%\hat{z}_{t}(x,t)=\hat{z}_{xx}(x,t)   , \; x\in (0,1),
% \crr
% \hat{z}_{x} (0,t)=-q {z}(0,t),\  \
%\hat{z}_{x}(1,t)= u_0(t)+c_1 [z(1,t)-\hat{z}(1,t)] ,    \crr
% \dot{\zeta}(t)= \mbox{sgn}(b)[z(1,t)-\hat{z}(1,t)]u_0(t) ,
%        \end{array}\right.
%\end{equation}
%where $c_1>0$ is a  tuning parameter.
 Combining \dref{2023828z1}, \dref{20238291031}, \dref{20221111734} and  \dref{2023829903},  we obtain the closed-loop system of \dref{20237122035}:
 \begin{equation}\label{2023829945}
\left\{\begin{array}{l}
w_{t}(x,t)=w_{xx}(x,t)  ,\crr
 w_{x} (0,t)=-qw(0,t) ,\ \
\disp w_{x}(1,t)= b\zeta(t)u_0(t) , \crr
  u_0(t) \disp =   \disp  -(q+c_0)\left[  \hat{z}(1,t)+q \int_{0}^{1}e^{q(1-x)}\hat{z}(x,t)dx\right]+ v_x(1,t),\crr
\hat{z}_{t}(x,t)=\hat{z}_{xx}(x,t)   , \; x\in (0,1),
 \crr
 \hat{z}_{x} (0,t)=-q [w(0,t)-r(t)],\  \
\hat{z}_{x}(1,t)= u_0(t)+c_1 [w(1,t)-v(1,t)-\hat{z}(1,t)] -v_x(1,t),    \crr
 \dot{\zeta}(t)= -\mbox{sgn}(b)[w(1,t)-v(1,t)-\hat{z}(1,t)]u_0(t) ,
 \end{array}\right.
\end{equation}
  where
  % $c_0$ and $c_1$ are positive tuning parameters and
  $v(1,t)$ and $v_x(1,t)$ are given by
  \dref{2023829858} and \dref{2023828927}, respectively.

 \begin{theorem}\label{th2023829953}
Let $q>0$, $b\neq 0$,  $c_0>0$ and $c_1>0$. Suppose that the sign of $b$ is known and 
%there exists a positive constant $M_r$ such that
the reference $r\in C^{\infty}[0,\infty)$  satisfies
  \begin{equation}\label{2023829957}
\sup_{t\geq0, j=0,1,\cdots} | r^{(j)}(t)|<+\infty.
 \end{equation}
 Then for any initial state
$(w(\cdot,0),\hat{z}(\cdot,0),\zeta(0))\in  \H^2\times\R$,
the closed-loop system  \dref{2023829945}
  admits a  weak solution
$(w(\cdot,t), \hat{z}(\cdot,t), {\zeta}(t))\in C(0,\infty;\H^2\times\R)$ such that
  \begin{equation}\label{2023829954}
  \int_0^{\infty}|w(0,t)-r(t)|^2dt<+\infty
 \end{equation}
and
\begin{equation}\label{2023829956}
\sup_{t\geq0}\left[\|w(\cdot,t)\|_{\H}+\|\hat{z}(\cdot,t)\|_{\H}+| b\zeta(t)-1 |\right]<+\infty.
 \end{equation}
If we assume further that the reference $r$ satisfies the following    persistent excitation
condition:
\begin{equation}\label{202396903}
\lim_{t\to+\infty}\int_t^{t+\tau}v_x(1,s)ds\neq0,
 \end{equation}
%where  $v_x(1,t) $ is given by \dref{2023828927},
then
\begin{equation}\label{202396902}
  \zeta(t)-\frac1b \to 0 \ \ \mbox{as}\ \ t\to+\infty.
 \end{equation}

 \end{theorem}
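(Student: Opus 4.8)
\medskip
\noindent\emph{Proof plan.}
The plan is to pass to observation-error coordinates together with a backstepping-transformed observer, and then to close a single Lyapunov estimate over the whole cascade. First, since $\sup_{t\ge0,\,j\ge0}|r^{(j)}(t)|<\infty$ by \dref{2023829957}, the series \dref{2023829858} and \dref{2023828927} converge uniformly, so $v$, $v(1,\cdot)$ and $v_x(1,\cdot)$ are bounded on $[0,\infty)$; in particular $v_x(1,\cdot)\in L^\infty(0,\infty)\subset W$. Put $\tilde z=w-v-\hat z$ and $\tilde\zeta=\frac1b-\zeta$. Using $v_t=v_{xx}$, $v(0,t)=r(t)$ and \dref{2023829945}, a direct computation shows that $(\tilde z,\tilde\zeta)$ satisfies \emph{exactly} the error system \dref{20237122040frac1b} (with $\tilde w$ replaced by $\tilde z$), driven by $u_0$, and that $w(0,t)-r(t)=z(0,t)=\hat z(0,t)+\tilde z(0,t)$ by \dref{2023829903}. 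Introduce the invertible Volterra transformation $p(x,t)=\hat z(x,t)+q\int_0^x e^{q(x-y)}\hat z(y,t)\,dy$, whose inverse $\hat z(x,t)=p(x,t)-q\int_0^x p(y,t)\,dy$ is also bounded on $\H$; then $p(1,t)$ equals the bracket in \dref{20221111734}, so $u_0(t)=-(q+c_0)p(1,t)+v_x(1,t)$, while $p$ obeys the \emph{$u_0$-free} target system $p_t=p_{xx}+q^2e^{qx}\tilde z(0,t)$, $p_x(0,t)=-q\tilde z(0,t)$, $p_x(1,t)=-c_0p(1,t)+c_1\tilde z(1,t)$, with $p(0,t)=\hat z(0,t)$. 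Well-posedness of \dref{2023829945} (a unique weak solution in $C(0,\infty;\H^2\times\R)$) then follows along the lines of the proof of Theorem \ref{th20237141510}: for a candidate $u_0$ the $(\tilde z,\tilde\zeta)$-system is non-autonomous and linear, the $p$-system is linear with $\tilde z(0,\cdot),\tilde z(1,\cdot)$ as its data, and the admissibility of the boundary observations for the heat semigroup makes $u_0\mapsto -(q+c_0)p(1,\cdot)+v_x(1,\cdot)$ a contraction on a short time interval; the a~priori bound below then excludes finite-time blow-up.

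The core is the energy estimate. Let $V$ be the functional \dref{202310142044} of the error system $(\tilde z,\tilde\zeta)$ and $V_2(t)=\frac12\|p(\cdot,t)\|_\H^2$. As in the computation following \dref{202310142044}, $\dot V=-\|\tilde z_x\|_\H^2-c_1\tilde z^2(1,t)$ (all $u_0$-terms, including the new $v_x(1,t)$ piece, cancel identically). Integrating by parts in the $p$-equation and using the trace inequality $\|f\|_{L^\infty(0,1)}^2\le 2\big(f^2(1)+\|f_x\|_\H^2\big)$ together with Young's inequality, one gets $\dot V_2\le -\frac{c_0}{2}p^2(1,t)-\frac34\|p_x\|_\H^2+C\big(\tilde z^2(1,t)+\|\tilde z_x\|_\H^2\big)$ for some constant $C=C(q,c_0,c_1)>0$. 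Choosing a small weight $A\in\big(0,\min\{1,c_1\}/C\big]$ and setting $\mathcal V=V+AV_2$, the cross terms are absorbed and $\dot{\mathcal V}\le -\gamma\big(\|\tilde z_x\|_\H^2+\tilde z^2(1,t)+p^2(1,t)+\|p_x\|_\H^2\big)\le0$ for some $\gamma>0$. Hence $\mathcal V$ is nonincreasing, so $\|\tilde z(\cdot,t)\|_\H$, $|\tilde\zeta(t)|$ and $\|p(\cdot,t)\|_\H$ are bounded; integrating in $t$ and invoking the trace inequality once more, $\|\tilde z_x\|_\H^2,\ \tilde z^2(1,\cdot),\ \|\tilde z\|_\H^2,\ p^2(1,\cdot),\ \|p_x\|_\H^2,\ \|p\|_\H^2\in L^1(0,\infty)$.

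The three assertions now follow. Since $p(0,t)=\hat z(0,t)$, $|w(0,t)-r(t)|^2=|p(0,t)+\tilde z(0,t)|^2\le 4\big(p^2(1,t)+\|p_x\|_\H^2+\tilde z^2(1,t)+\|\tilde z_x\|_\H^2\big)\in L^1(0,\infty)$, which is \dref{2023829954}; boundedness of $\|\hat z\|_\H$ (equivalently $\|p\|_\H$), of $\|v\|_\H$ and of $|\tilde\zeta|$, together with $\|w\|_\H\le\|v\|_\H+\|\hat z\|_\H+\|\tilde z\|_\H$ and $|b\zeta(t)-1|=|b|\,|\tilde\zeta(t)|$, gives \dref{2023829956}. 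For \dref{202396902}: since $p(1,\cdot)\in L^2(0,\infty)$ and $v_x(1,\cdot)\in L^\infty(0,\infty)$ we have $u_0=-(q+c_0)p(1,\cdot)+v_x(1,\cdot)\in W$, so Theorem \ref{Th20238281955} applies to the error system $(\tilde z,\tilde\zeta)$; moreover $\big|\int_t^{t+\tau}p(1,s)\,ds\big|\le\sqrt\tau\big(\int_t^{t+\tau}p^2(1,s)\,ds\big)^{1/2}\to0$ as $t\to\infty$, so the persistent-excitation hypothesis \dref{202396903} on $v_x(1,\cdot)$ transfers to $u_0$, i.e.\ \dref{202381809*} holds for $u_0$, and therefore $\zeta(t)\to\frac1b$ by Theorem \ref{Th20238281955}.

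The step I expect to be the main obstacle is the coupled estimate of the second paragraph: the backstepping transformation removes $u_0$ from the observer equation but reinjects the error trace $\tilde z(0,t)$ \emph{both} as the interior source $q^2e^{qx}\tilde z(0,t)$ and as the Neumann datum $-q\tilde z(0,t)$ at $x=0$, and these, together with the feedback of $p(1,t)$ into $u_0$ that drives the error system, must all be dominated using a single small coupling weight $A$; checking that the boundary-trace bounds and the admissible range of $A$ are uniform in the data is where the care lies. A lesser technical point is the local well-posedness in the presence of the quadratic $\zeta$-update and of boundary traces that are not bounded on $\H$, which is handled exactly as in Theorem \ref{th20237141510}.
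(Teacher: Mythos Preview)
Your proposal is correct and follows essentially the same route as the paper: pass to error coordinates $(\tilde z,\tilde\zeta)$ and the backstepping-transformed observer variable $p$ (called $\check z$ in the paper), obtain a combined energy estimate on the cascade, and then transfer the persistent-excitation condition from $v_x(1,\cdot)$ to $u_0$ using $p(1,\cdot)\in L^2(0,\infty)$ before invoking Theorem~\ref{Th20238281955}. The paper packages the energy step as Lemma~\ref{Lm2023952250}, whose Galerkin proof (inherited from Lemma~\ref{Lm2023828938Ad92}) uses the scaled functional $L_N$ of \dref{2023931622}, which in original variables is exactly $\gamma^2 V+V_2$; your ``small weight $A$'' is the paper's $\gamma^{-2}$. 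The only substantive deviation is your well-posedness step: the paper does this by Galerkin (so your phrase ``along the lines of the proof of Theorem~\ref{th20237141510}'' is slightly misleading, since that proof is also Galerkin-based), whereas you sketch a short-time contraction plus continuation via the a~priori bound; this alternative is reasonable but would need to be written out, as the trace maps $p(1,\cdot)$, $\tilde z(0,\cdot)$, $\tilde z(1,\cdot)$ are unbounded on $\H$ and the contraction has to be set up in an $L^2$-in-time framework.
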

%\begin{example}\label{Ex2023829956}

   When $r(t)\equiv r^*$ with $ r^* \neq0$.
   By \dref{2023828927}, $v_x(1,t) =  -qr(t)\equiv-qr^*$
and hence the  persistent excitation condition \dref{202396903} obviously  holds.
In this case, the
  closed-loop system \dref{2023829945} is reduced to
 \begin{equation}\label{2023829945constant}
\left\{\begin{array}{l}
w_{t}(x,t)=w_{xx}(x,t)  ,\crr
 w_{x} (0,t)=-qw(0,t) ,\ \
\disp w_{x}(1,t)= b\zeta(t)u_0(t) , \crr
  u_0(t) \disp =   \disp  -(q+c_0)\left[  \hat{z}(1,t)+q \int_{0}^{1}e^{q(1-x)}\hat{z}(x,t)dx\right]- qr^*,\crr
\hat{z}_{t}(x,t)=\hat{z}_{xx}(x,t)   , \; x\in (0,1),
 \crr
 \hat{z}_{x} (0,t)=-q [w(0,t)-r^*],\  \
\hat{z}_{x}(1,t)= u_0(t)+c_1 [w(1,t)- (1-q)r^*-\hat{z}(1,t)] +qr^*,    \crr
 \dot{\zeta}(t)= -\mbox{sgn}(b)[w(1,t)-(1-q)r^*-\hat{z}(1,t)]u_0(t) .
 \end{array}\right.
\end{equation}
 % where $c_0$ and $c_1$ are positive tuning parameters.
  Theorem \ref{th2023829953} leads immediately  to the following corollary:

\begin{corollary}\label{Co202396}
Suppose that  $q>0$, $b\neq 0$,  $c_0>0$,  $c_1>0$, $ r^*\neq0$ and the sign of $b$ is known.
 Then for any initial state
$(w(\cdot,0),\hat{z}(\cdot,0),\zeta(0))\in  \H^2\times\R$,
the closed-loop system  \dref{2023829945constant}
  admits a  weak solution
$(w(\cdot,t), \hat{z}(\cdot,t), {\zeta}(t))\in C(0,\infty;\H^2\times\R)$ such that
  \dref{2023829954}, \dref{2023829956} and \dref{202396902}  hold with $r(t)\equiv r^*$.
 \end{corollary}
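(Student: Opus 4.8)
\textbf{Proof proposal for Corollary \ref{Co202396}.}
The plan is to obtain the corollary as a direct specialization of Theorem \ref{th2023829953} to the constant reference $r(t)\equiv r^*$, so the only work is to check that \emph{all} hypotheses of that theorem, including the persistent excitation condition \dref{202396903}, are met, and that the closed-loop system \dref{2023829945} genuinely reduces to \dref{2023829945constant} in this case. There is no substantial obstacle; the argument is a short verification.

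First I would note that $r(t)\equiv r^*$ belongs to $C^{\infty}[0,\infty)$ and satisfies the boundedness condition \dref{2023829957} trivially, since $r^{(0)}(t)=r^*$ is constant and $r^{(j)}(t)\equiv 0$ for all $j\geq 1$, so $\sup_{t\geq 0,\,j\geq 0}|r^{(j)}(t)|=|r^*|<+\infty$. Next I would substitute $r^{(j)}(t)\equiv 0$ ($j\geq1$) into the defining series \dref{2023829858} and into \dref{2023828927}: this gives $v(x,t)=r^*\big(1-qx\big)$, hence $v(1,t)=(1-q)r^*$ and $v_x(1,t)=-qr^*$. Plugging these two constants into the closed-loop system \dref{2023829945} (replacing $v(1,t)$ by $(1-q)r^*$ and $v_x(1,t)$ by $-qr^*$ throughout the controller $u_0$, the boundary condition $\hat{z}_x(1,t)$, and the update law) reproduces exactly \dref{2023829945constant}, which establishes the claimed reduction.

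Finally I would verify the persistent excitation condition \dref{202396903}. Since $v_x(1,s)\equiv-qr^*$, for any $\tau>0$ we have
\begin{equation}\label{202396903verif}
\lim_{t\to+\infty}\int_t^{t+\tau}v_x(1,s)\,ds=-q r^*\tau\neq 0,
\end{equation}
because $q>0$ and $r^*\neq 0$. Thus every hypothesis of Theorem \ref{th2023829953} holds, and that theorem yields at once the existence of the weak solution $(w(\cdot,t),\hat z(\cdot,t),\zeta(t))\in C(0,\infty;\H^2\times\R)$ together with the tracking estimate \dref{2023829954}, the uniform bound \dref{2023829956}, and the parameter convergence $\zeta(t)-\frac1b\to 0$ as $t\to+\infty$ in \dref{202396902}, now with $r(t)\equiv r^*$. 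This completes the proof. $\hfill\square$
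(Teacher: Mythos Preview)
Your proposal is correct and follows essentially the same approach as the paper: the text preceding the corollary already notes that $v_x(1,t)\equiv -qr^*$ makes the persistent excitation condition \dref{202396903} obvious and that \dref{2023829945} reduces to \dref{2023829945constant}, after which the corollary is stated as an immediate consequence of Theorem~\ref{th2023829953}. Your write-up simply fills in the routine verifications (smoothness and boundedness of $r$, the explicit form $v(x,t)=r^*(1-qx)$, and the reduction) that the paper leaves implicit.
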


%  \begin{proof}
%    Since    $v_x(1,t) =  -qr(t)\equiv r^*$, the     persistent
%excitation condition \dref{202396903} holds.
%  \end{proof}

%\begin{remark}\label{Re20231092013}
%  Owing to the space limitation, we have not prove the uniqueness of  solution  to corresponding closed-loop systems  in Theorems \ref{th20237141510}, \ref{th2023829953} and Corollary \ref{Co202396}. The current results can already demonstrate all the
%   contribution of controller
%  \end{remark}

 %
%  In Section \ref{NumSim}, we will make numerical simulation for system \dref{2023829945constant} to show the theoretical results in Theorem \ref{th2023829953} visually.

 \section{Proof of Theorem  \ref{Th20238281955} }\label{PfTh1}
 Before proving   Theorem  \ref{Th20238281955}, we first give two lemmas.
\begin{lemma}\label{Lm2023828938}
Suppose that  $b\neq 0$,  $c_1>0$ and $u_0\in L^2_{\rm loc}(0,\infty)$. Then, for any initial state
$(\tilde{w}(\cdot,0),\tilde{\zeta}(0))\in \H\times\R$ and for any $T>0$, system
  \dref{20237122040frac1b} admits a unique weak solution $(\tilde{w}(\cdot,t),\tilde{\zeta}(t))\in C(0,T;\H\times\R)$  such that
 \begin{equation}\label{2023828938}
  \int_0^t\int_{0}^{1}\tilde{w}_x^2(x,s)dx+c_1\tilde{w}^2(1,s)ds = F(0)-{F}(t)
 \end{equation}
 and
  \begin{equation}\label{2023828938E}
 \int_0^t \int_{0}^{1}\tilde{w}_{ x}^2(x,s)dx  +b u_0(s) \tilde{\zeta} (s)\tilde{w}(1,s)
    + c_1     \tilde{w} ^2(1,s)ds= E(0)-E(t),
 \end{equation}
 where  $0<t\leq T$  and
 \begin{equation}\label{20237141056}
 F(t)=E(t)+\frac{|b|}{2}\tilde{\zeta}^2(t),\ \ E(t)=\frac12\int_{0}^{1}\tilde{w}^2(x,t)dx .
\end{equation}
\end{lemma}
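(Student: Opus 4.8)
The plan is to establish well-posedness by a Galerkin/semigroup argument and then to verify the two energy identities by formal differentiation, justified a posteriori by the regularity of the constructed solution. First I would rewrite the $\tilde w$-subsystem of \dref{20237122040frac1b} as an abstract evolution equation $\dot{\tilde w} = A\tilde w + B(t)$ on $\H$, where $A f = f''$ with domain encoding the Neumann condition at $x=0$ and the Robin-type condition $f'(1) = -c_1 f(1)$; this operator generates an analytic $C_0$-semigroup on $\H$, and the inhomogeneity coming from the boundary term $-b\tilde\zeta(t)u_0(t)$ at $x=1$ is handled as an admissible boundary input since $u_0\in L^2_{\rm loc}(0,\infty)$. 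Because $\tilde\zeta$ is coupled back only through the scalar ODE $\dot{\tilde\zeta}(t) = \mathrm{sgn}(b)\,u_0(t)\tilde w(1,t)$, the full system is a mild perturbation of a linear problem: I would set it up as a fixed point on $C(0,T;\H\times\R)$ for $T$ small, using the trace estimate $\tilde w(1,\cdot)\in L^2(0,T)$ that comes with the analytic semigroup and the boundary admissibility, then extend to arbitrary $T$ once the energy identity \dref{2023828938} furnishes an a priori bound (the right-hand side $F(0)-F(t)\le F(0)$ prevents blow-up).

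Next I would derive the energy identities. For \dref{2023828938E}, multiply the PDE $\tilde w_t = \tilde w_{xx}$ by $\tilde w$ and integrate over $(0,1)$: integration by parts gives $\frac{d}{dt}E(t) = \tilde w(1,t)\tilde w_x(1,t) - \tilde w(0,t)\tilde w_x(0,t) - \int_0^1 \tilde w_x^2\,dx$. The boundary conditions $\tilde w_x(0,t)=0$ and $\tilde w_x(1,t) = -b\tilde\zeta(t)u_0(t) - c_1\tilde w(1,t)$ turn this into
\begin{equation}
\dot E(t) = -\int_0^1 \tilde w_x^2(x,t)\,dx - b\,u_0(t)\tilde\zeta(t)\tilde w(1,t) - c_1\tilde w^2(1,t),
\end{equation}
and integrating in time yields \dref{2023828938E}. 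For \dref{2023828938}, add the contribution of $\frac{|b|}{2}\tilde\zeta^2$: from the $\tilde\zeta$-ODE, $\frac{d}{dt}\big(\frac{|b|}{2}\tilde\zeta^2\big) = |b|\,\mathrm{sgn}(b)\,u_0(t)\tilde w(1,t)\tilde\zeta(t) = b\,u_0(t)\tilde w(1,t)\tilde\zeta(t)$, which exactly cancels the indefinite cross term in $\dot E$, leaving $\dot F(t) = -\int_0^1 \tilde w_x^2\,dx - c_1\tilde w^2(1,t)\le 0$; integrating gives \dref{2023828938}.

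The main obstacle is rigor rather than the formal computation: the trace $\tilde w(1,t)$ and the spatial derivative $\tilde w_x$ are only weakly defined for an $\H$-valued mild solution, so the manipulations above must be justified by approximation. I would do this by working first with smooth initial data (or finite Galerkin truncations), for which $\tilde w$ is a classical solution and all integrations by parts are legitimate, establishing the identities there; then pass to the limit using that both sides of \dref{2023828938} and \dref{2023828938E} are continuous in the initial data in the $\H\times\R$ topology — the left-hand side integrals are controlled by the $L^2(0,T;H^1)$ and $L^2(0,T)$-trace norms, which depend continuously on the data by maximal-regularity estimates for the analytic semigroup, and the right-hand sides $F(0)-F(t)$, $E(0)-E(t)$ depend continuously by the mild-solution estimate and the boundedness of $\tilde\zeta$. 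Uniqueness follows from linearity of the $\tilde w$-equation in $(\tilde w,\tilde\zeta)$ together with the Gronwall argument implicit in \dref{2023828938}. A minor point to check carefully is the regularity of $u_0$: only $L^2_{\rm loc}$ is assumed, so the boundary term $b\tilde\zeta(t)u_0(t)$ is merely $L^2_{\rm loc}$ in time, which is exactly the admissibility threshold for Neumann boundary control of the heat equation, so no extra smoothing in $t$ is needed — but one must invoke the correct admissibility result rather than assume $C^1$ boundary data.
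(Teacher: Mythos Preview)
Your proposal is correct and the formal energy computations match the paper's exactly. The main difference is in the framework for rigorous well-posedness: you lean on semigroup theory, boundary-control admissibility, and a fixed-point argument, whereas the paper carries out a direct Galerkin construction throughout. Specifically, the paper picks the sine basis $\phi_n(x)=\sqrt{2}\sin\sqrt{\lambda_n}x$, writes the finite-dimensional linear ODE system for the Galerkin coefficients together with $\tilde\zeta_N$, derives the exact analogues of $\dot E_N$ and $\dot F_N$ at the approximate level, and then---crucially---applies the same energy identity to the \emph{difference} $(\tilde w_N-\tilde w_L,\tilde\zeta_N-\tilde\zeta_L)$ of two approximations (which satisfies a system of the same form with initial data $\mathcal P_N\tilde w(\cdot,0)-\mathcal P_L\tilde w(\cdot,0)$ and $\tilde\zeta_{NL}(0)=0$) to show that the approximations form Cauchy sequences in $C(0,T;\H\times\R)$, in $L^2(0,T;H^1(0,1))$, and in the boundary trace $L^2(0,T)$. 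The identities \dref{2023828938}--\dref{2023828938E} then follow by passing to the limit in their Galerkin versions. This is more elementary and self-contained than invoking admissibility or maximal regularity, and it avoids the fixed-point step entirely since the finite-dimensional linear ODE system already has global solutions; on the other hand, your semigroup route is more modular and would transfer more readily to other boundary-controlled parabolic problems once the relevant admissibility results are available off the shelf.
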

\begin{proof}
 We will prove Lemma \ref{Lm2023828938}  by Galerkin method \cite{Evans}. Let
 \begin{equation}\label{wxh201912303Ad429}
 \left\{\begin{array}{l}
 \disp \phi_n(x)=\sqrt{2}  \sin \sqrt{\lambda_n}x ,\crr
   \lambda_n=\left(n-\frac{1}{2}\right)^2\pi^2,
  \end{array}\right. \ \ x\in[0,1],\ \ n\geq 1.
 \end{equation}
Then,  $\{\phi_n(\cdot) \}_{n=1}^{\infty}$ forms an orthonormal   basis for  $\H$.
%and $\langle \phi_{jx},\phi_{kx}\rangle_{\H}=0$ if and only if $j\neq k$, $j,k=1,2,\cdots$.
%
% and forms an orthogonal basis for $H^1_L(0,1)=\{f\in H^1(0,1)\ | \ f(0)=0\}$.
%satisfies
%\begin{equation}\label{wxh201912302Ad429}
%\left\{\begin{array}{l}
%\phi_n''(x)=-\lambda_n\phi_n(x),\crr
% \phi_n(0)=\phi'_n(1)=0,
%\end{array}\right.n=1,2,\cdots.
%\end{equation}
 Fix a positive integer $N$, let $\mathcal{P}_N$ be the orthogonal projection of $\H$
 onto  $V_N:=\mbox{the linear span of }  \{\phi_j:j=1,2,\cdots,N  \} $.
Let
   \begin{equation} \label{2018961642}
    \tilde{w}_N(\cdot,t)=\sum_{j=1}^{N}\tilde{w}_{N,j}(t)\phi_j,\ \   \tilde{\zeta}_N(t)
    \end{equation}
     be
the approximate solutions that   satisfy the following system of ordinary differential equations:
 \begin{equation} \label{2023818918}
\left\{  \begin{array}{l}
\disp \langle \tilde{w}_{Nt}  (\cdot,t),\phi_j\rangle_{\H}
+ \langle \tilde{w}_{Nx}(\cdot,t)  , \phi_{jx}\rangle_{\H}
\disp =
   -\phi_j(1)      \left[   b  \tilde{\zeta}_N(t)u_0(t)
    + c_1     \tilde{w}_N(1,t) \right]  ,\crr
 \dot{\tilde{\zeta}}_N(t)=    \mbox{sgn}(b)u_0(t)  \tilde{w}_N (1,t),\crr
\disp \tilde{w}_N(\cdot,0)=\mathcal{P}_N\tilde{w}(\cdot,0), \ \ \tilde{\zeta}_N(0)=\tilde{\zeta}(0),\ \ \ j=1,2\cdots, N.
 \end{array}\right.
\end{equation}
 Clearly  the initial value  satisfies
   \begin{equation} \label{20189101501}
\lim\limits_{N\to+\infty} \|\tilde{w}_N(\cdot,0)-\tilde{w}(\cdot,0)\|_{\H} =0
\end{equation}
and system  \dref{2023818918} is an initial-value problem for a first-order $N+1$
system of linear ordinary differential equations with the  time-varying coefficient $u_0(t)$ and the
unknown functions $\tilde{w}_{N,j}, \tilde{\zeta}_N$.
By the theory of  ordinary differential equations,
  for every  $N \geq 1$ and any $T>0$, \dref{2023818918}  has a solution
$\tilde{w}_{N,j}, \tilde{\zeta}_N\in C^1[0,  T]$.

In terms of system \dref{2023818918}, we let
\begin{equation}\label{2023818807N}
 F_N(t)=E_N(t)+\frac{|b|}{2}\tilde{\zeta}_N^2(t),\ \ E_N(t)=\frac12\int_{0}^{1}\tilde{w}_N^2(x,t)dx .
\end{equation}
Then \dref{20189101501} implies  that
 \begin{equation}\label{20238291125}
 F_N(0)\to F(0)\ \ \mbox{as}\ \ N\to+\infty .
\end{equation}
%where
%\begin{equation}\label{20238291126}
% F(0)=E(0)+\frac{|b|}{2}\tilde{\zeta}^2(0),\ \ E(0)=\frac12\int_{0}^{1}\tilde{w}^2(x,0)dx .
%\end{equation}

Multiplying the first  equation in   \dref{2023818918} by  $\tilde{w}_{N,j}(t)$
 and summing  for $j = 1, \cdots ,N$, and
 multiplying the second   equation in   \dref{2023818918}
 by $\tilde{\zeta}_N(t)$, we get
  \begin{equation}\label{2023818806EN}
\dot{E}_N(t)= -\int_{0}^{1}\tilde{w}_{Nx}^2(x,t)dx -b u_0(t) \tilde{\zeta}_N(t) \tilde{w}_N (1,t)
    - c_1     \tilde{w}_N^2(1,t)
\end{equation}
  and
\begin{equation}\label{2023818940}
 \begin{array}{l}
\dot{F}_N(t)= \disp -\int_{0}^{1}\tilde{w}_{Nx}^2(x,t)dx-c_1\tilde{w}_N^2(1,t)
\leq 0.
\end{array}
\end{equation}
That is, for any $t\in [0,T]$,
 \begin{equation}\label{20238291501}
\int_0^t\int_{0}^{1}\tilde{w}_{Nx}^2(x,s)dx +b u_0(s) \tilde{\zeta}_N(s)  \tilde{w}_N (1,s)
    + c_1     \tilde{w}_N^2(1,s)ds=E_N(0)-E_N(t)
\end{equation}
and
\begin{equation}\label{2023828938N}
  \int_0^t\int_{0}^{1}\tilde{w}_{Nx}^2(x,s)dx+c_1\tilde{w}^2_N(1,s)ds  +{F}_N(t)= F_N(0).
 \end{equation}
% By \dref{20238291125}, there exists a positive constant $C$
%  that is independent of $N$     such that $ F_N(0)<C$. Consequently, for any given $T>0$, the energy estimate
%  \dref{2023828938N} implies that the sequence  $\{\tilde{w}_N \}$
%  is  bounded in    $L^{2}(0,T;\H)$ and  $L^2(0,T;H ^1(0,1))$,
%      $\{\tilde{\zeta}_N(t)\}$ is  bounded
%   in $L^{2}(0,T) $
%  and  $\{\tilde{w} _N(1,t)\}$
%is  bounded
%   in   $L^2(0,T) $.

Let $(\tilde{w}_N(\cdot,t),\tilde{\zeta}_{N}(t)) ,   (\tilde{w}_L(\cdot,t),\tilde{\zeta}_{L}(t))$  be two   approximate solutions, and without loss of generality, we assume $N > L$. Set
$$
\left\{\begin{array}{l}
\disp \tilde{w}_{NL}(\cdot,t) =\tilde{w}_N(\cdot,t)-\tilde{w}_L(\cdot,t)=
\sum_{j=1}^N[\tilde{w}_{N,j}(t)-\tilde{w}_{L,j}(t)]\phi_j,\crr
\disp   \tilde{\zeta}_{NL}(t)=
\tilde{\zeta}_{N}(t)-\tilde{\zeta}_{L}(t),
\end{array}\right.
$$
 with the understanding that
$\tilde{w}_{L,j}(t)\equiv 0$ when $j>L$. Then $\tilde{w}_{NL}(\cdot,t)$  and $\tilde{\zeta}_{NL}(t)$ satisfy
\begin{equation} \label{20239152217}
\left\{  \begin{array}{l}
\disp \langle \tilde{w}_{NLt}  (\cdot,t),\phi_j\rangle_{\H}
+ \langle \tilde{w}_{NLx}(\cdot,t)  , \phi_{jx}\rangle_{\H}
\disp =
  - \phi_j(1)      \left[  bu_0(t) \tilde{\zeta}_{NL}(t)
    + c_1    \tilde{w}_{NL}(1,t) \right]  ,\crr
  \dot{\tilde{\zeta}}_{NL}(t)=    \mbox{sgn}(b)u_0(t)  \tilde{w}_{NL} (1,t), \crr
\disp \tilde{w}_{NL}(\cdot,0)=\mathcal{P}_N\tilde{w}(\cdot,0)-\mathcal{P}_L\tilde{w}(\cdot,0), \
\tilde{\zeta}_{NL}(0)=0,\ \ j=1,2\cdots ,N.
 \end{array}\right.
\end{equation}
Let
\begin{equation} \label{20239152218}
 F_{NL}(t)=\frac12\int_{0}^{1}\tilde{w}_{NL}^2(x,t)dx +\frac{|b|}{2}\tilde{\zeta}_{NL}^2(t),\ \ t\geq0.
\end{equation}
 Since system \dref{2023818918} takes the same form as system  \dref{20239152217},
 we
 get
   \begin{equation}\label{2023828938NAd915}
  \int_0^t\int_{0}^{1}\tilde{w}_{NLx}^2(x,s)dx+c_1\tilde{w}^2_{NL}(1,s)ds  +{F}_{NL}(t)= F_{NL}(0) ,
 \end{equation}
 as we have obtained   \dref{2023828938N}.
 Note that
   \begin{equation}\label{20239152220}
  F_{NL}(0) \to 0\ \mbox{as}\ N,L\to+\infty,
\end{equation}
\dref{2023828938NAd915} implies that, for any given $t\in[0,T]$,
   $\{\tilde{w}_N(\cdot,t) \}$
  is  a Cauchy sequence in    $ \H$ and
    $\{\tilde{\zeta}_N(t)\}$ is  a Cauchy sequence
   in $\R $. Moreover,
  $\{\tilde{w}_N \}$
  is  a Cauchy sequence in
   $L^2(0,T;H ^1(0,1))$
  %    $\{\tilde{\zeta}_N(t)\}$ is  a Cauchy sequence
 %  in $L^{2}(0,T) $,
  and  $\{\tilde{w} _N(1,t)\}$
is  a Cauchy sequence
   in   $L^2(0,T) $.
Hence, there exist  $(\tilde{w}(\cdot,t),\tilde{\zeta}(t))\in C(0,T;\H\times\R)$ such that
  \begin{equation} \label{2023951453}
  \left\{\begin{array}{l}
  \tilde{w}_{N}  (\cdot,t)\to \tilde{w}(\cdot,t)\ \  \mbox{  in}\ \ \H,\ \ \forall\ t\in[0,T],
   \crr
    \disp    \tilde{\zeta}_N(t)\to \tilde{\zeta}(t) \ \  \mbox{  in}\ \ \R,\ \ \forall\ t\in[0,T],\crr
     \disp \tilde{w}_N \to \tilde{w} \ \ \mbox{  in}\ \   \ L^{2}(0,  T;H ^1(0,1)),\crr
    \disp \tilde{w}_N(1,\cdot)\to \tilde{w}(1,\cdot) \ \ \mbox{   in}\ \   \ L^{2}( 0, T ),
   \end{array}\right.\mbox{as}\ \ N\to+\infty.
\end{equation}

For any  $T>0$ and $0<t<T$,  by integrating  \dref{2023818918} from $0$ to $t$, we obtain
 \begin{equation} \label{2023818918inte}
\left\{  \begin{array}{l}
\disp \langle \tilde{w}_{N}  (\cdot,t),\phi_j\rangle_{\H}
+ \int_0^t\langle \tilde{w}_{Nx}(\cdot,\tau)  , \phi_{jx}\rangle_{\H}d\tau
\disp =\langle \tilde{w}_{N}  (\cdot,0),\phi_j\rangle_{\H}\crr
\disp  - \phi_j(1) \int_0^t     \left[   b  \tilde{\zeta}_N(\tau)u_0(\tau)
    + c_1     \tilde{w}_N(1,\tau)     \right]d\tau  ,\crr
  \tilde{\zeta} _N(t)=\disp \tilde{\zeta} (0)  +\mbox{sgn}(b) \int_0^t \tilde{w}_N (1,\tau)u_0(\tau)d\tau,
 %\disp \tilde{w}_N(\cdot,0)=\mathcal{P}_N\tilde{w}(\cdot,0), \ \ %\tilde{\zeta}_N(0)=\tilde{\zeta}(0),\
 \end{array}\right. \ \ j=1,2\cdots ,N.
\end{equation}
 We  pass to the limit  as  $N\to+\infty$ in \dref{2023818918inte} to get
 \begin{equation} \label{2023818918inteweak}
\left\{  \begin{array}{l}
\disp \langle \tilde{w}   (\cdot,t),\phi_j\rangle_{\H}
+ \int_0^t\langle \tilde{w}_{x}(\cdot,\tau)  , \phi_{jx}\rangle_{\H}d\tau
\disp =\langle \tilde{w}   (\cdot,0),\phi_j\rangle_{\H}\crr
\disp  - \phi_j(1) \int_0^t     \left[   b  \tilde{\zeta} (\tau)u_0(\tau)
    + c_1     \tilde{w} (1,\tau)     \right]d\tau  ,\crr
  \tilde{\zeta}  (t)=\disp \tilde{\zeta} (0)  +\mbox{sgn}(b) \int_0^t \tilde{w} (1,\tau)u_0(\tau)d\tau,
 %\disp \tilde{w}_N(\cdot,0)=\mathcal{P}_N\tilde{w}(\cdot,0), \ \ \ \ \ j=1,2\cdots ,N.
 \end{array}\right.
\end{equation}
which implies that $ (\tilde{w},\tilde{\zeta} )  $ is a weak solution of the system \dref{20237122040frac1b}.   Since system  \dref{20237122040frac1b}  is a linear system, the uniqueness of the solution is trivial.
   % By dominated convergence theorem and taking \dref{20238291125} into account,
   By virtue of \dref{2023951453},
     we let $N\to+\infty$ in \dref{2023828938N} and \dref{20238291501} to obtain  \dref{2023828938} and  \dref{2023828938E}, respectively.
\end{proof}

\begin{lemma}\label{Lm2023811150}
Let  $b\neq 0$ and $c_1>0$. Suppose that the controller
  $u_0\in W $.
  Then,
    for any initial state
$(\tilde{w}(\cdot,0),\tilde{\zeta}(0))\in \H\times\R$, system
  \dref{20237122040frac1b} admits a unique weak solution $(\tilde{w}(\cdot,t),\tilde{\zeta}(t))\in C(0,\infty;\H\times\R)$  such that
   \begin{equation}\label{20238291544}
   \int_{0}^{\infty}\tilde{w}^2(0,t)+\tilde{w}^2(1,t)dt<+\infty,
 \end{equation}
 \begin{equation}\label{2023811157}
 \|\tilde{w}(\cdot,t)\|_{\H}\to 0 \ \ \mbox{as}\ \ t\to+\infty
 \end{equation}
and
\begin{equation}\label{2023811157B}
  \tilde{\zeta} (t)  \to \zeta_* \ \ \mbox{as}\ \ t\to+\infty ,
 \end{equation}
where $\zeta_*$ is a constant that may not be zero.
If we suppose further  that there exists a time $\tau>0$ such that the  controller $u_0 $  satisfies
\dref{202381809*},
%\begin{equation}\label{202381809}
%      \int_{t}^{t+T}
%       u_0^2(s) ds\geq p_T , \ \ \forall\ t\geq0,\ T>0,
%\end{equation}
% \begin{equation}\label{202381809*}
%      \lim_{t\to+\infty}\int_{t}^{t+T}
%       u_0 (s) ds \neq0,
%\end{equation}
then,
\begin{equation}\label{20238231156}
  \tilde{\zeta} (t)  \to  0 \ \ \mbox{as}\ \ t\to+\infty .
 \end{equation}
\end{lemma}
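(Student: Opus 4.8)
The plan is to run a Lyapunov/integral-estimate argument on the error system \dref{20237122040frac1b}, using the identities \dref{2023828938} and \dref{2023828938E} established in Lemma \ref{Lm2023828938}, and then to upgrade the weak $L^2$-type decay to the pointwise statements \dref{2023811157} and \dref{2023811157B}. First I would note that Lemma \ref{Lm2023828938} already gives, on every finite interval $[0,T]$, a unique weak solution together with
\begin{equation}\label{plan1}
\int_0^t\!\!\int_0^1\tilde w_x^2(x,s)\,dx\,ds+c_1\!\int_0^t\tilde w^2(1,s)\,ds=F(0)-F(t),
\end{equation}
where $F(t)=E(t)+\tfrac{|b|}{2}\tilde\zeta^2(t)\ge0$ is nonincreasing. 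Since $F$ is bounded below by $0$ and nonincreasing, $F(t)\downarrow F_\infty\ge0$, so the left side of \dref{plan1} stays bounded as $t\to\infty$; in particular $\int_0^\infty\!\int_0^1\tilde w_x^2\,dx\,dt<\infty$ and $\int_0^\infty\tilde w^2(1,t)\,dt<\infty$. To also control $\tilde w^2(0,t)$ I would use the trace/Poincaré-type bound $\tilde w^2(0,t)\le 2\tilde w^2(1,t)+2\int_0^1\tilde w_x^2(x,t)\,dx$ (from $\tilde w(0,t)=\tilde w(1,t)-\int_0^1\tilde w_x\,dx$ and Cauchy–Schwarz), which gives \dref{20238291544} after integrating. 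The global existence on $[0,\infty)$ then follows because the a priori bound on $F$ prevents blow-up, so the local solutions of Lemma \ref{Lm2023828938} patch together.

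Next, for \dref{2023811157} I would show $E(t)=\tfrac12\|\tilde w(\cdot,t)\|_\H^2\to0$. From the convergence $F(t)\to F_\infty$ and the update equation $\dot{\tilde\zeta}(t)=\mathrm{sgn}(b)u_0(t)\tilde w(1,t)$, I would argue that $\tilde\zeta(t)$ converges: since $u_0\in W$ means $u_0=f+g$ with $f\in L^\infty$, $g\in L^2$, and since $\tilde w(1,\cdot)\in L^2(0,\infty)$, the product $u_0(t)\tilde w(1,t)$ is in $L^1(0,\infty)$ (the $g\tilde w(1,\cdot)$ piece by Cauchy–Schwarz, the $f\tilde w(1,\cdot)$ piece because $L^\infty\cdot L^2\subset L^2\subset L^1_{\rm loc}$ — more precisely $\int_0^\infty |f||\tilde w(1,\cdot)|\le \|f\|_\infty\int_0^\infty|\tilde w(1,\cdot)|$, which needs $\tilde w(1,\cdot)\in L^1$; I would instead bound $\int_0^\infty|u_0\tilde w(1,\cdot)|\le\|g\|_{L^2}\|\tilde w(1,\cdot)\|_{L^2}+\|f\|_\infty\cdot(\text{not }L^1)$ — so the cleaner route is: $\tilde\zeta$ is bounded because $F$ is, and then $\dot{\tilde\zeta}=\mathrm{sgn}(b)(f+g)\tilde w(1,\cdot)$; the $g$-part of $\dot{\tilde\zeta}$ is in $L^1$, while the $f$-part, together with the already-known convergence $\tfrac{|b|}{2}\tilde\zeta^2(t)=F(t)-E(t)$, is handled by showing $E(t)\to0$ first). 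So the logically clean ordering is: (i) show $E(t)\to0$; (ii) deduce $\tilde\zeta^2(t)=\tfrac{2}{|b|}(F(t)-E(t))\to\tfrac{2}{|b|}F_\infty$, hence $|\tilde\zeta(t)|\to\sqrt{2F_\infty/|b|}=:|\zeta_*|$; (iii) rule out oscillation of sign of $\tilde\zeta$ between $\pm|\zeta_*|$ using continuity (if $\zeta_*\ne0$, $\tilde\zeta$ cannot cross $0$ for large $t$, so it settles on one value), giving \dref{2023811157B}. For step (i), $E(t)\to0$: I would combine $\int_0^\infty\int_0^1\tilde w_x^2\,dx\,dt<\infty$ with a standard argument — either show $\tfrac{d}{dt}E(t)$ is bounded below (Barbalat-type) using $\dot E=-\int_0^1\tilde w_x^2-bu_0\tilde\zeta\tilde w(1,\cdot)-c_1\tilde w^2(1,\cdot)$ from \dref{2023828938E}, or directly estimate $E(t)\le E(t_0)+\int_{t_0}^t[\,\text{small}\,]$; the Poincaré inequality $\int_0^1\tilde w^2\le 2\tilde w^2(1,\cdot)+2\int_0^1\tilde w_x^2$ gives $E(t)\le \tilde w^2(1,t)+\int_0^1\tilde w_x^2(x,t)\,dx$, and since the time-integrals of the right side are finite, $E(t)\to0$ along a subsequence; then monotonic-in-the-limit control via $F$ promotes this to a full limit.

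Finally, for the persistent-excitation conclusion \dref{20238231156}: I want to show $\zeta_*=0$, equivalently $F_\infty=0$. Suppose for contradiction $\tilde\zeta(t)\to\zeta_*\ne0$. From the error PDE, $\tilde w_x(1,t)=-b\tilde\zeta(t)u_0(t)-c_1\tilde w(1,t)$; since $\tilde w(1,\cdot)\in L^2$ and (as I will have shown) $\tilde w_x(1,\cdot)$ is ``small'' in an integrated sense, this forces $\tilde\zeta(t)u_0(t)$ to be small in an integrated sense, i.e. $\int_t^{t+\tau}\tilde\zeta(s)u_0(s)\,ds\to0$. But $\tilde\zeta(s)\to\zeta_*\ne0$, so $\int_t^{t+\tau}\tilde\zeta(s)u_0(s)\,ds-\zeta_*\int_t^{t+\tau}u_0(s)\,ds\to0$, whence $\zeta_*\lim_{t\to\infty}\int_t^{t+\tau}u_0(s)\,ds=0$, contradicting \dref{202381809*}. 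Making ``$\int_t^{t+\tau}\tilde\zeta u_0\to0$'' rigorous is the delicate point: I would get it from $\tilde\zeta(t)-\tilde\zeta(t+\tau)=-\mathrm{sgn}(b)\int_t^{t+\tau}u_0(s)\tilde w(1,s)\,ds\to0$ — no, that involves $\tilde w(1,\cdot)$ not $\tilde\zeta$. The correct device is: from \dref{2023828938E} the quantity $\int_0^\infty[bu_0(s)\tilde\zeta(s)\tilde w(1,s)]\,ds$ converges (it equals $E(0)-E_\infty$ minus the other two convergent integrals), and combined with $\tilde w(1,\cdot)\in L^2$ and $\tilde\zeta$ bounded, one extracts that $u_0(s)\tilde w(1,s)\to0$ in a suitable averaged sense; alternatively, integrate $\dot{\tilde\zeta}$: $\tilde\zeta$ converges $\Rightarrow\int_0^\infty u_0(s)\tilde w(1,s)\,ds$ converges, and since $u_0\in W$ and $\tilde w(1,\cdot)\in L^2$, one then shows $\int_t^{t+\tau}u_0(s)\tilde w(1,s)\,ds\to0$; pairing this with the PDE relation and a Poincaré estimate on $\int_t^{t+\tau}\int_0^1\tilde w_x^2$ (which $\to0$ since the full integral converges) should isolate $\zeta_*\int_t^{t+\tau}u_0\to0$.

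\textbf{Expected main obstacle.} The routine part is the Lyapunov bookkeeping (existence, $L^2$-estimates, $E(t)\to0$). The genuine difficulty is the persistent-excitation step: converting the integrated/averaged smallness information about $\tilde w(1,\cdot)$ and $\tilde w_x$ into the clean statement $\zeta_*\lim_{t\to\infty}\int_t^{t+\tau}u_0(s)\,ds=0$, carefully handling the decomposition $u_0=f+g\in L^\infty+L^2$ so that the $L^\infty$-part does not spoil the $L^1$-integrability arguments. I would expect this to require a short technical lemma (possibly the one the authors have relegated to the Appendix) about how $\int_t^{t+\tau}u_0 h\to0$ when $h\in L^2(0,\infty)$ and $u_0\in W$, applied with $h=\tilde w(1,\cdot)$ and with the PDE-derived relation between $\tilde w_x(1,\cdot)$, $u_0\tilde\zeta$ and $\tilde w(1,\cdot)$.
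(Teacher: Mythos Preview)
Your Lyapunov bookkeeping and the route to \dref{20238291544} and \dref{2023811157B} match the paper almost exactly (nonincreasing $F$, trace/Poincar\'e for $\tilde w(0,t)$, then $\tilde\zeta^2(t)=\tfrac{2}{|b|}(F(t)-E(t))\to\tfrac{2}{|b|}F_\infty$ and a continuity argument to fix the sign). Two steps, however, are not yet on solid ground.

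\textbf{The step $E(t)\to0$.} Your ``subsequence $+$ $F$ monotone'' idea does not close: $E$ is not monotone, and knowing $E(t_n)\to0$ together with $F(t)\to F_\infty$ does not by itself rule out $\limsup E(t)>0$ (you would need independent control of $\tilde\zeta$, which you are planning to deduce \emph{from} $E\to0$). The paper instead derives the differential inequality
\[
\dot E(t)=-\int_0^1\tilde w_x^2-b u_0\tilde\zeta\,\tilde w(1,\cdot)-c_1\tilde w^2(1,\cdot)\le -\omega E(t)+C\,|u_0(t)|\,|\tilde w(1,t)|
\]
(using Poincar\'e to absorb $-\int\tilde w_x^2-c_1\tilde w^2(1,\cdot)\le -\omega E$ and the $F$-bound on $|\tilde\zeta|$), and then applies the Appendix lemma: if $\dot\eta\le-\omega\eta+(\gamma_1+\gamma_3)\gamma_2$ with $\gamma_1,\gamma_2\in L^2$, $\gamma_3\in L^\infty$, then $\eta\to0$. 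That lemma is exactly where the decomposition $u_0\in W=L^\infty+L^2$ is used, not in the PE step as you anticipated.

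\textbf{The persistent-excitation step.} Your plan to extract $\int_t^{t+\tau}\tilde\zeta u_0\to0$ from the boundary relation $\tilde w_x(1,t)=-b\tilde\zeta u_0-c_1\tilde w(1,t)$ stalls because you have no control on $\int_t^{t+\tau}\tilde w_x(1,s)\,ds$: the available estimate is $\int_0^\infty\!\int_0^1\tilde w_x^2\,dx\,dt<\infty$, which says nothing about the trace $\tilde w_x(1,\cdot)$. The paper's device is to introduce the spatial moment
\[
\rho(t)=\int_0^1 x\,\tilde w(x,t)\,dx,
\]
for which integration by parts gives $\dot\rho(t)=-b\tilde\zeta(t)u_0(t)-(1+c_1)\tilde w(1,t)+\tilde w(0,t)$. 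Now $\rho(t+\tau)-\rho(t)\to0$ because $\|\tilde w(\cdot,t)\|_{\H}\to0$, and the boundary terms integrate to zero over $[t,t+\tau]$ by \dref{20238291544}; this cleanly isolates $b\int_t^{t+\tau}\tilde\zeta(s)u_0(s)\,ds\to0$, after which your argument (replace $\tilde\zeta$ by $\zeta_*$ and contradict \dref{202381809*}) goes through verbatim. The point is that testing the PDE against a smooth spatial weight converts the uncontrolled Neumann trace into a time derivative of an $\H$-bounded quantity.
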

\begin{proof}
   The existence of the solution can be obtained by Lemma \ref{Lm2023828938}.
   It is sufficient to prove \dref{20238291544},  \dref{2023811157} and \dref{2023811157B}.
     We first prove \dref{20238291544} and the convergence \dref{2023811157}.
By \dref{2023828938} and \dref{20237141056}, we have
%  \begin{equation}\label{20237191102}
%%\dot{F}(t)=-\int_{0}^{1}\tilde{w}_x^2(x,t)dx-c_1\tilde{w}^2(1,t)\leq0,
%\tilde{\zeta}^2(t)\leq \frac{2}{|b|} F(t) \leq  \frac{2}{|b|}F(0),
%\end{equation}
 \begin{equation}\label{2023720757}
\tilde{\zeta}^2(t)\leq \frac{2}{|b|} F(t) \leq  \frac{2}{|b|}F(0).
\end{equation}
By \cite[p.258, Theorem 1]{Evans}, there exists a positive constant $M$ such that
 \begin{equation}\label{202411242219FD}
\tilde{w}^2(0,t) \leq M \left [\tilde{w}^2(1,t)+\disp \int_{0}^{1}\tilde{w}_x^2(x,t)dx \right].
\end{equation}
Passing to the limit  as $ t\to+\infty$ in \dref{2023828938}, we obtain
 \begin{equation}\label{20238231205}
 \int_0^{\infty}\left[ \int_{0}^{1}\tilde{w}_x^2(x,t)dx+c_1\tilde{w}^2(1,t)\right] dt
  \leq F(0),
 \end{equation}
 which, together with \dref{202411242219FD}, leads to  \dref{20238291544}.

By \dref{2023828938E}, \dref{2023720757} and the Poincar\'{e}'s inequality,  there exists a positive constant $\omega$ such that
   \begin{equation*}\label{2023941442}
    \begin{array}{rl}
 \disp \dot{E}(t)&\disp =-\int_{0}^{1}\tilde{w}_{ x}^2(x,t)dx  -b u_0(t) \tilde{\zeta} (t)\tilde{w}(1,t)
    - c_1     \tilde{w} ^2(1,t) \crr
    &\leq -\omega E(t)  +\sqrt{2|b|F(0)} \cdot |u_0(t)|    |  \tilde{w}(1,t)|
         .
        \end{array}
 \end{equation*}
Applying  Lemma \ref{Lm20231004} to this inequality, we get
   \dref{2023811157} easily.

Now we prove the convergence \dref{2023811157B}. Owing to \dref{2023828938}  and \dref{20237141056}, we conclude that
 \begin{equation}\label{202395808}
  \dot{F}(t)\leq0\ \ \mbox{and}\ \ 0\leq F(t)\leq F(0),\ \ \forall\ t\geq0.
 \end{equation}
By the monotone convergence theorem, there exists a $F_*\geq0$  such that
$F(t)\to F_*$ as $t\to+\infty$. Furthermore,
\begin{equation}\label{202395815}
   \lim_{t\to+\infty}\tilde{\zeta}^2(t)=\frac{2}{|b|}  \lim_{t\to+\infty}\left[F(t)-E(t)\right]=
   \frac{2 F_*}{|b|}
 \end{equation}
and hence
\begin{equation}\label{202395815abs}
   \lim_{t\to+\infty}|\tilde{\zeta} (t)|=
   \sqrt{\frac{2 F_* }{|b|} } .
 \end{equation}

When $F_*=0$, \dref{202395815abs} implies that $\tilde{\zeta} (t)\to0$ as $t\to+\infty$. It suffices to consider the case that $F_*\neq0$.  By \dref{202395815}, there exists a $t_0>0$ such that
\begin{equation}\label{202395821}
 \tilde{\zeta}^2(t)>
   \frac{F_*}{|b|}>0,\ \ \forall\ t>t_0.
 \end{equation}
Hence, $\tilde{\zeta}(t)\neq 0$ for all $t>t_0$.
Note that $\tilde{\zeta}\in C[0,\infty)$, it follows from the intermediate value theorem
that
\begin{equation}\label{202395827}
 |\tilde{\zeta}(t)|=\tilde{\zeta}(t) \ \ \mbox{or}\ \  |\tilde{\zeta}(t)|=-\tilde{\zeta}(t),\ \ \forall\ t>t_0,
 \end{equation}
which, together with \dref{202395815abs}, leads to
  \begin{equation*}\label{2023831906}
  \lim_{t\to+\infty} \tilde{\zeta} (t) =
   \sqrt{\frac{2 F_* }{|b|} } \ \ \mbox{or}\ \    \lim_{t\to+\infty} \tilde{\zeta} (t) =
   -\sqrt{\frac{2 F_* }{|b|} } .
 \end{equation*}
So we have proved  the convergence \dref{2023811157B}.

%By \dref{20237141056} and \dref{2023828938}, the function $F$ is   a monotonically decreasing function with a lower bound $0$. By the monotone bounded convergence theorem,
%there exists a constant $F_0$ such that  $\lim_{t\to\infty}F(t)=F_*$.
%Hence,
%$$
%\tilde{\zeta}^2(t)=\frac{2}{|b|}F(t)-\frac{1}{|b|}E(t)\to\frac{2}{|b|}F_*\ \ \mbox{as}\ \ t\to+\infty,
%$$
%which leads to \dref{2023811157B} with $\zeta_*=\frac{2}{|b|}F_*$.

In order to prove \dref{20238231156},
we suppose   that the condition   \dref{202381809*}  holds but $\zeta_*\neq0$.
%For any $\tau>0$, a simple computation shows that
%\begin{equation}\label{20238291530}
%   \zeta_*\int_{t}^{t+\tau} u_0(s)ds=
%  \int_{t}^{t+\tau}[\zeta_*-\tilde{\zeta}^2(s)]u_0(s)ds +
%   \int_{t}^{t+\tau} \tilde{\zeta}^2(s) u_0(s)ds
% \end{equation}
 Define
\begin{equation}\label{20238251127}
\rho (t)=\int_0^1x\tilde{w} (x,t)dx,\ \ t\geq0 .
%\ \ \mbox{and}\ \
%\vartheta_n(t)=-c_1\tilde{w} (1,t)\phi_n(1)
% +\phi_{nx}(0)\tilde{w}(0,t).
\end{equation}
Finding the derivative along the solution of system   \dref{20237122040frac1b} to get
\begin{equation}\label{202382511250}
\begin{array}{l}
 \disp  \dot{\rho} (t)=     -b\tilde{\zeta} (t)u_0(t)
 -(1+c_1)\tilde{w} (1,t)
 + \tilde{w}(0,t).
 \end{array}
 \end{equation}
%Consequently,
%\begin{equation}\label{20238251125}
%\begin{array}{l}
% \disp \tilde{\zeta}(t)\dot{\rho}_n(t)=  -\lambda_n\rho_n(t) \tilde{\zeta}(t) -b\tilde{\zeta}^2(t)u_0(t)\phi_n(1)+\tilde{\zeta}(t)\vartheta_n(t)
% \end{array}
% \end{equation}
 % \begin{equation}\label{wxh201912303Ad429}
% \left\{\begin{array}{l}
% \disp \phi_n(x)=\sqrt{2}  \sin \sqrt{\lambda_n}x ,\crr
%   \lambda_n=\left(n-\frac{1}{2}\right)^2\pi^2,
%  \end{array}\right. \ \ x\in[0,1],\ \ n\geq 1.
% \end{equation}
%Then,  $\{\phi_n(\cdot) \}_{n=1}^{\infty}$ forms an orthonormal   basis for  $L^2[0,1]$ and
%satisfies
%\begin{equation}\label{wxh201912302Ad429825}
%\left\{\begin{array}{l}
%\phi_n''(x)=-\lambda_n\phi_n(x),\crr
% \phi_n(0)=\phi'_n(1)=0,
%\end{array}\right.n=1,2,\cdots.
%\end{equation}
% \begin{equation}\label{20238251116}
%\begin{array}{l}
% \disp \langle \tilde{w}_t(\cdot,t),\phi_n\rangle_{\H}= \langle \tilde{w}_{xx}(\cdot,t),\phi_{n}\rangle_{\H}=
% \tilde{w}_x(1,t)\phi_n(1)+\phi_{nx}(0)\tilde{w}(0,t)-\lambda_n\langle \tilde{w} (\cdot,t),\phi_{n}\rangle_{\H}\crr
% \disp=  -b\tilde{\zeta}(t)u_0(t)\phi_n(1)-c_1\tilde{w} (1,t)\phi_n(1)
% +\phi_{nx}(0)\tilde{w}(0,t)-\lambda_n\langle \tilde{w} (\cdot,t),\phi_{n}\rangle_{\H}\crr
% \end{array}
% \end{equation}
% In view the last equation of system   \dref{20237122040frac1b},  for any $\tau>0$,  we use
% integration by parts
 % to get
 For any $t>0$ and   $\tau>0$, it follows from \dref{2023811157} and  \dref{20238251127} that
  \begin{equation}\label{20238251130toinfty}
 \int_{t}^{t+\tau} \dot{\rho} (s)ds=
    {\rho} (t+\tau)-   {\rho} (t) \to 0\ \ \mbox{as}\  \ t\to+\infty.
 \end{equation}
% which, together with \dref{2023811157}, \dref{20238251127} and \dref{2023720757}, leads to
% \begin{equation}\label{20238251130toinfty}
% \int_{t}^{t+\tau} \tilde{\zeta}(s)\dot{\rho}_n(s)ds \to 0\ \ \mbox{as}\  \ t\to+\infty.
% \end{equation}
 %By the Sobolev trace-embedding, there exists a positive constant $M $ such that
%   \begin{equation}\label{20238291540}
%   \tilde{w}^2(0,t)\leq M \left[ \tilde{w}^2(1,t)+\int_{0}^{1}\tilde{w}_x^2(x,t)dx\right]
% ,
% \end{equation}
% which, together with \dref{2023828938}, leads to
%   \begin{equation}\label{20238291544}
%   \int_{0}^{\infty}\tilde{w}^2(0,t)+\tilde{w}^2(1,t)dt<+\infty.
% \end{equation}
By \dref{20238291544}, for any $\tau>0$, we have
 \begin{equation}\label{20238291546}
   \int_{t}^{t+\tau}\tilde{w}^2(0,s)+\tilde{w}^2(1,s)ds\to 0 \ \ \mbox{as}\  \ t\to+\infty,
 \end{equation}
which yields
   \begin{equation}\label{20238251135}
 \int_{t}^{t+\tau} \tilde{w}(0,s) -(1+c_1)\tilde{w} (1,s)
  ds
  \to0\ \ \mbox{as}\  \ t\to+\infty.
 \end{equation}
 Combining \dref{202382511250}, \dref{20238251130toinfty} and \dref{20238251135}, we arrive at
 \begin{equation}\label{20238291552}
 b  \int_{t}^{t+\tau}\tilde{\zeta} (s) u_0(s)ds \to0\ \ \mbox{as}\  \ t\to+\infty.
 \end{equation}

 On the other hand,
 it follows from  \dref{2023811157B} that
\begin{equation}\label{20238291555}
 b  \int_{t}^{t+\tau}[\zeta_*-\tilde{\zeta} (s)]u_0(s)ds \to0\ \ \mbox{as}\  \ t\to+\infty.
  \end{equation}
%it follows from  \dref{202382511250} and \dref{20238251135} that
Note that
\begin{equation}\label{20238251136}
 b  \int_{t}^{t+\tau}\zeta_* u_0(s)ds=
 b  \int_{t}^{t+\tau}[\zeta_*-\tilde{\zeta} (s)]u_0(s)ds +
 b  \int_{t}^{t+\tau} \tilde{\zeta} (s) u_0(s)ds.
  \end{equation}
Combining \dref{20238291555}, \dref{20238251136} and \dref{20238291552}, we have
\begin{equation}\label{20238291557}
 b \zeta_* \int_{t}^{t+\tau} u_0(s)ds \to0\ \ \mbox{as}\  \ t\to+\infty,
  \end{equation}
 which, together with the assumption  $ b \zeta_* \neq0$,    contradicts  to the   condition \dref{202381809*}.
 \end{proof}

{\it Proof of Theorem \ref{Th20238281955}.}
By Lemma \ref{Lm2023828938}, we can suppose that  $(\tilde{w}(\cdot,t),\tilde{\zeta}(t))\in C(0,\infty;\H\times\R)$  is a weak solution to  system
  \dref{20237122040frac1b}.
  Let
   \begin{equation}\label{20238291644}
   \zeta(t)=\frac1b-\tilde{\zeta}(t),\ \ t\geq0.
   \end{equation}
       Then   $\zeta\in C[0,\infty)$ and
       $u(t)=\zeta(t)u_0(t)$ belongs to $L^2_{\rm loc}(0,\infty)$.
    Hence,
system \dref{20237122035} with $u(t)=\zeta(t)u_0(t)$ admits a unique solution
$ {w}(\cdot,t) \in C(0,\infty;\H)$.  Let
 \begin{equation}\label{20238291646}
   \hat{w}(x,t)=w(x,t)-\tilde{w}(x,t),\ \ x\in[0,1],\ t\geq0.
   \end{equation}
A simple computation   shows that   $(\hat{w}(\cdot,t),\zeta(t))\in C(0,\infty;\H\times\R)$ defined by \dref{20238291644}
 and \dref{20238291646} is a weak solution of  the observer \dref{20237122036}.
Furthermore, the convergence    \dref{20238282000}  and \dref{20238282005} can be obtained by Lemma \ref{Lm2023811150} directly.
\hfill$\Box$

%We first consider the following transformed system
% \begin{equation}\label{20238291624}
%\left\{\begin{array}{l}
% \hat{w}_{t}(x,t)=\hat{w}_{xx}(x,t)   , \; x\in (0,1),
% \crr
% \hat{w}_{x} (0,t)=-q \hat{w}(0,t)-q\tilde{w}(0,t),\  \
%\hat{w}_{x}(1,t)= u_0(t)+c_1  \tilde{w}(1,t)  ,   \crr
%  \tilde{w}_{t}(x,t)=\tilde{w} _{xx}(x,t)   ,
% \crr
% \tilde{w} _{x} (0,t)= 0 ,\ \ \tilde{w} _{x}(1,t)=-b\tilde{\zeta}(t)u_0(t) -c_1\tilde{w} (1,t),    \crr
%\dot{\tilde{\zeta}}(t)=  \mbox{sgn}(b) u_0(t)\tilde{w} (1,t),
% \end{array}\right.
%\end{equation}
%where $u_0\in L^{\infty}(0,\infty)$.

 \section{Proof of Theorem  \ref{th20237141510}}\label{PfTh2}
 Before proving Theorem  \ref{th20237141510}, we first consider the following transformed system:
 \begin{equation}\label{20238292127}
\left\{\begin{array}{l}
 \tilde{w}_{t}(x,t)=\tilde{w} _{xx}(x,t)   ,
 \crr
 \tilde{w} _{x} (0,t)= 0 ,\
  \disp\tilde{w} _{x}(1,t)= b(q+c_0)\tilde{\zeta}(t)  \check{w}(1,t)-c_1\tilde{w} (1,t),    \crr
\disp\dot{\tilde{\zeta}}(t)=  - (q+c_0) \mbox{sgn}(b)  \tilde{w} (1,t)\check{w}(1,t) ,\crr
  \check{w}_{t}(x,t)=\check{w}_{xx}(x,t)  +q^2e^{qx}\tilde{w}(0,t) ,\crr
 \check{w}_{x} (0,t)=-q\tilde{w}(0,t),\ \
\disp  \check{w}_{x}(1,t)= - c_0  \check{w}(1,t) +c_1  \tilde{w}(1,t)   .
 \end{array}\right.
\end{equation}
%We consider system \dref{20238292127} in the Hilbert space $\X=\H\times \R\times \R$.
%Define the operator $\mathcal{A}: D(\mathcal{A})\subset\X\to\X$ by
%\begin{equation}\label{20239192102}
%\left\{\begin{array}{l}
%\mathcal{A}(f,h,g)=(f'',0,g''),\ \ \forall\ (f,h,g)\in D(\mathcal{A}),\crr
%\disp D(\mathcal{A})=\Big{\{}(f,h,g)\in H^2(0,1)\times\R\times H^2(0,1)\ |\crr
% f'(0)=0,f'(1)=-c_1f(1),g'(0)=-qf(0),g'(1)=-c_0g(1)+c_1f(1)\Big{\}}.
%\end{array}\right.
%\end{equation}
%Define the operator $\mathcal{F}: D(\mathcal{A})\subset\X\to\X$ by
%\begin{equation}\label{20239192103}
%\left\{\begin{array}{l}
%\mathcal{F}(f,h,g)=(f'',0,g''),\ \ \forall\ (f,h,g)\in D(\mathcal{A}),\crr
%\disp D(\mathcal{A})=\Big{\{}(f,h,g)\in H^2(0,1)\times\R\times H^2(0,1)\ |\crr
% f'(0)=0,f'(1)=-c_1f(1),g'(0)=-qf(0),g'(1)=-c_0g(1)+c_1f(1)\Big{\}}.
%\end{array}\right.
%\end{equation}

\begin{lemma}\label{Lm2023828938Ad92}
Suppose that  $b\neq 0$,  $c_0 >0$ and $c_1>0$. Then for any initial state
$(\tilde{w}(\cdot,0),\check{w}(\cdot,0),\tilde{\zeta}(0))\in \H^2\times\R$,
system
  \dref{20238292127} admits a   weak solution $(\tilde{w}(\cdot,t),\check{w}(\cdot,t),\tilde{\zeta}(t))\in C(0,\infty;\H^2\times\R)$ such that

 \begin{equation}\label{2023951709}
   \int_{0}^{\infty} \int_{0}^{1}\tilde{w}_{x}^2(x,t)+\check{w}_{x}^2(x,t)dx +         \tilde{w}^2(1,t)+\check{w}^2(1,t)dt<+\infty.
  % C_0\|(\tilde{w}(\cdot,0),\check{w}(\cdot,0), \tilde{\zeta}(0))\|_{\H^2\times \R}^2,
 \end{equation}
% where   $C_0 $ is a positive constant.
  \end{lemma}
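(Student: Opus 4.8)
The plan is to combine a Lyapunov argument, which yields the estimate \dref{2023951709}, with a Galerkin construction (or an iteration built on Lemma \ref{Lm2023828938}) for the existence of a weak solution. For the $(\tilde w,\tilde\zeta)$-block I would use the functional behind the design, $F_1(t)=\tfrac12\int_0^1\tilde w^2(x,t)\,dx+\tfrac{|b|}{2}\tilde\zeta^2(t)$ (cf. \dref{202310142044}). Differentiating along \dref{20238292127}, with $\tilde w_x(0,t)=0$, the cross term $b(q+c_0)\tilde\zeta(t)\tilde w(1,t)\check w(1,t)$ coming from the flux $\tilde w_x(1,t)$ is cancelled exactly by the one coming from $\dot{\tilde\zeta}(t)$, so that $\dot F_1(t)=-\int_0^1\tilde w_x^2\,dx-c_1\tilde w^2(1,t)\le0$. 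For $\check w$ I would take $F_2(t)=\tfrac12\int_0^1\check w^2(x,t)\,dx$ and obtain, after two integrations by parts and insertion of $\check w_x(0,t)=-q\tilde w(0,t)$, $\check w_x(1,t)=-c_0\check w(1,t)+c_1\tilde w(1,t)$ and the interior source $q^2e^{qx}\tilde w(0,t)$,
\[
\dot F_2(t)=-\int_0^1\check w_x^2\,dx-c_0\check w^2(1,t)+c_1\check w(1,t)\tilde w(1,t)+q\check w(0,t)\tilde w(0,t)+q^2\tilde w(0,t)\!\int_0^1e^{qx}\check w\,dx .
\]
Thus $\dot F_2$ is dissipative in $\check w$ up to three indefinite terms, each of them linear in a \emph{boundary value} of $\tilde w$.

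The idea is then to absorb the $\check w$-factors of those three terms into the dissipation and to pay for the $\tilde w$-factors with $F_1$. Applying Young's inequality, the trace estimate \dref{202411242219FD} for $\check w$ (i.e. $\check w^2(0,t)\le M[\check w^2(1,t)+\int_0^1\check w_x^2\,dx]$), and the Poincar\'{e}--Wirtinger inequality $\int_0^1\check w^2\,dx\le2\check w^2(1,t)+2\int_0^1\check w_x^2\,dx$ (used also to bound $(\int_0^1e^{qx}\check w\,dx)^2\le e^{2q}\int_0^1\check w^2\,dx$), and choosing the Young parameter small, one gets $\dot F_2(t)\le-\tfrac12\int_0^1\check w_x^2\,dx-\tfrac{c_0}{2}\check w^2(1,t)+C_1\tilde w^2(1,t)+C_2\tilde w^2(0,t)$ with $C_1,C_2$ depending only on $q,c_0,c_1$. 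Using \dref{202411242219FD} once more, now for $\tilde w$, to replace $\tilde w^2(0,t)$ by $M[\tilde w^2(1,t)+\int_0^1\tilde w_x^2\,dx]$, and then picking $\beta>0$ large enough that $\beta\dot F_1$ overwhelms the remaining $\tilde w$-terms, the functional $V:=\beta F_1+F_2$ satisfies $\dot V(t)\le-\delta\big[\int_0^1(\tilde w_x^2+\check w_x^2)\,dx+\tilde w^2(1,t)+\check w^2(1,t)\big]\le0$ for some $\delta>0$. Integrating on $[0,t]$ and letting $t\to+\infty$ gives $\sup_{t\ge0}V(t)\le V(0)$ (so the solution stays in $\H^2\times\R$) and the desired bound \dref{2023951709}.

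For the existence of a weak solution I would follow the Galerkin scheme of Lemma \ref{Lm2023828938}: expand $\tilde w_N(\cdot,t),\check w_N(\cdot,t)$ in the basis \dref{wxh201912303Ad429} and keep $\tilde\zeta_N(t)$ as an extra scalar, getting a system of $2N+1$ ordinary differential equations with quadratic, hence locally Lipschitz, right-hand side; its finite-dimensional energy identity gives $\dot V_N\le0$, hence global solvability on every $[0,T]$ and the uniform bounds needed to pass to the limit. (Alternatively, one may keep $\check w(1,\cdot)$ fixed and apply Lemma \ref{Lm2023828938} with $u_0:=-(q+c_0)\check w(1,\cdot)$ to solve the $(\tilde w,\tilde\zeta)$-block, solve the linear heat equation for $\check w$ driven by $\tilde w(0,\cdot),\tilde w(1,\cdot)$, iterate on a short interval, and continue globally via the a priori bound $V(t)\le V(0)$.) The limit $(\tilde w,\check w,\tilde\zeta)$ lies in $C(0,\infty;\H^2\times\R)$ and \dref{2023951709} follows by passing to the limit in the corresponding approximate inequality, exactly as at the end of the proof of Lemma \ref{Lm2023828938}.

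The step I expect to be the real obstacle is this existence part --- more precisely, the passage to the limit in the two nonlinear boundary products $\tilde\zeta_N(t)\check w_N(1,t)$ and $\tilde w_N(1,t)\check w_N(1,t)$: the Cauchy argument used for the linear system in Lemma \ref{Lm2023828938} no longer applies verbatim (the difference of two Galerkin solutions does not solve a system of the same form), so one needs instead strong $L^2(0,T)$ compactness of the boundary traces --- obtainable from an Aubin--Lions argument using the uniform bounds on $\{\tilde w_N\},\{\check w_N\}$ in $L^2(0,T;H^1(0,1))$ and on their time derivatives --- together with uniform convergence of $\{\tilde\zeta_N\}$ via Arzel\`{a}--Ascoli. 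The energy computation itself is routine once the cancellation in $\dot F_1$ is noticed and one recalls that $\check w$, lacking a homogeneous boundary condition, calls for the Poincar\'{e}--Wirtinger inequality rather than the ordinary Poincar\'{e} inequality.
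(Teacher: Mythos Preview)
Your proposal is correct and lands on essentially the same Lyapunov functional as the paper, but the two proofs are organised differently in ways worth noting.

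\medskip
\textbf{Energy estimate.} The paper also uses a weighted sum of your $F_1$ and $F_2$: its scaling $z_N=\gamma\tilde w_N$, $\xi_N=\gamma^2\tilde\zeta_N$ and $L_N=\tfrac12\int_0^1(z_N^2+\check w_N^2)\,dx+\tfrac{|b|}{2\gamma^2}\xi_N^2$ is exactly $\gamma^2F_1+F_2$, i.e.\ your $\beta=\gamma^2$. The difference is that the paper performs the estimate \emph{at the Galerkin level} with the sine basis $\phi_j(x)=\sqrt{2}\sin\sqrt{\lambda_j}x$, for which $\phi_j(0)=0$. Hence $\tilde w_N(0,t)\equiv0$ and the two $\tilde w(0,t)$--terms in your $\dot F_2$ simply do not appear; the only indefinite term left is $\tfrac{c_1}{\gamma}z_N(1,t)\check w_N(1,t)$, which is killed by choosing $\gamma$ large (equation \dref{2023100161043}). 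Your route---bounding $\check w(0,t)$ and $\int_0^1e^{qx}\check w\,dx$ by the dissipation via the trace estimate and Poincar\'e--Wirtinger, then using \dref{202411242219FD} on $\tilde w$ and picking $\beta$ large---achieves the same thing without relying on a special basis, at the cost of a few more constants. Both give \dref{2023951709} after integration and lower semicontinuity.

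\medskip
\textbf{Passage to the limit.} Here the paper does \emph{not} invoke Aubin--Lions. Instead it runs a Cauchy argument on the Galerkin sequence: for $N>L$ it writes the system satisfied by $(z_{NL},\check w_{NL},\xi_{NL})$, introduces a second functional $L_{NL}$ with an extra small parameter $\varepsilon$ (equation \dref{2023931622NL1012}), and controls the nonlinear remainder terms $I_1,I_2$ pointwise by the a priori bound \dref{202310131615} and $\sup_{[0,t]}|\xi_{NL}|$. This yields strong convergence of $\tilde w_N,\check w_N$ in $C([0,T];\H)\cap L^2(0,T;H^1)$ and of their traces in $L^2(0,T)$ directly, from which the nonlinear products converge (equations \dref{20239201457}--\dref{20239201610}). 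Your suggested alternative---Aubin--Lions for the PDE components (using the uniform bounds in $L^2(0,T;H^1)$ together with bounds on the time derivatives, then the compact embedding $H^1(0,1)\hookrightarrow C[0,1]$ to get strong trace convergence) and Arzel\`a--Ascoli for $\tilde\zeta_N$ (equicontinuity from $\dot{\tilde\zeta}_N\in L^1(0,T)$ uniformly)---is the more standard compactness route and also works; it is arguably quicker but gives only subsequential convergence, whereas the paper's Cauchy argument gives convergence of the full sequence. Your diagnosis that the linear Cauchy argument of Lemma \ref{Lm2023828938} ``no longer applies verbatim'' is exactly the point the paper addresses with the $\varepsilon$-weighted $L_{NL}$ and the $|\xi_{NL}(t_*)|$ device.
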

\begin{proof}
 We will prove Lemma \ref{Lm2023828938Ad92}  by Galerkin method. Let
 $\{\phi_n \} $ be given by \dref{wxh201912303Ad429}.
 %Then $\phi_j(0)=0$, $j=1,2,\cdots$.
Suppose that
   \begin{equation} \label{2018961642Ad92}
    \tilde{w}_N(\cdot,t)=\sum_{j=1}^{N}\tilde{w}_{N,j}(t)\phi_j,\ \
    \check{w}_N(\cdot,t)=\sum_{j=1}^{N}\check{w}_{N,j}(t)\phi_j,\ \   \tilde{\zeta}_N(t)
    \end{equation}
   satisfy the following system of ordinary differential equations:
 \begin{equation} \label{2023818918Ad92}
\left\{  \begin{array}{l}
\disp \langle \tilde{w}_{Nt}  (\cdot,t),\phi_j\rangle_{\H}
+ \langle \tilde{w}_{Nx}(\cdot,t)  , \phi_{jx}\rangle_{\H}
\disp =
    \phi_j(1)      \left[   b (q+c_0) \tilde{\zeta}_N(t)   \check{w}_N(1,t)
    - c_1     \tilde{w}_N(1,t) \right]  ,\crr
    \disp \langle \check{w}_{Nt}  (\cdot,t),\phi_j\rangle_{\H}
+ \langle \check{w}_{Nx}(\cdot,t)  , \phi_{jx}\rangle_{\H}
\disp = \phi_j(1)      \left[
     c_1     \tilde{w}_N(1,t) -c_0\check{w}_N(1,t)\right]  ,\crr
 \dot{\tilde{\zeta}}_N(t)=    -\mbox{sgn}(b)(q+c_0)   \tilde{w}_N (1,t) \check{w}_N(1,t),\crr
\disp \tilde{w}_N(\cdot,0)=\mathcal{P}_N\tilde{w}(\cdot,0), \  \disp \check{w}_N(\cdot,0)=\mathcal{P}_N\check{w}(\cdot,0), \  \tilde{\zeta}_N(0)=\tilde{\zeta}(0),\ \   j=1,2\cdots ,N,
 \end{array}\right.
\end{equation}
where   $\mathcal{P}_N$ be the orthogonal projection of $\H$
 onto  $V_N:=\mbox{the linear span of }  \{\phi_j:j=1,2,\cdots,N  \} $.
 Clearly  the initial values satisfy
   \begin{equation} \label{20189101501Ad92}
\lim\limits_{N\to+\infty} \|\tilde{w}_N(\cdot,0)-\tilde{w}(\cdot,0)\|_{\H}+\|\check{w}_N(\cdot,0)-\check{w}(\cdot,0)\|_{\H}
 =0.
\end{equation}
%and
%   \begin{equation} \label{20189101501Ad92Check}
%\lim\limits_{N\to+\infty}  \|\check{w}_N(\cdot,0)-\check{w}(\cdot,0)\|_{\H} =0.
%\end{equation}
%and system  \dref{2023818918} is an initial-value problem for a first-order $2N+1$
%system of ordinary differential equations with the
%unknown functions $\tilde{w}_{N,j},\check{w}_{N,j},  \tilde{\zeta}_N$.
It follows from the
Cauchy-Peano theorem that for every  $N \geq 1$, \dref{2023818918Ad92}  has a solution
$\tilde{w}_{N,j}, \check{w}_{N,j}, \tilde{\zeta}_N\in C^1[0, T_N]$ for some $T_N > 0$.
 %Since system \dref{2023818918Ad92} is nonlinear, it is more difficult than system \dref{2023818918}
 % in Lemma \ref{Lm2023828938}.  So we divide the left proof into four  parts for clarity.

To address the nonlinearities  in \dref{2023818918Ad92}, we introduce
 \begin{equation}\label{202310131547}
 z_{N}(x,t)=\gamma \tilde{w}_{N}(x,t),\ \ \xi_{N}(t)=\gamma^2\tilde{\zeta}_{N}(t),
  %\ x\in[0,1],\  t\geq0,
\end{equation}
where $\gamma$ is a positive constant that is sufficiently large. Then, system \dref{2023818918Ad92} becomes
 \begin{equation} \label{202310131549}
\left\{  \begin{array}{l}
\disp \langle  z _{Nt}  (\cdot,t),\phi_j\rangle_{\H}
+ \langle  z _{Nx}(\cdot,t)  , \phi_{jx}\rangle_{\H}
\disp =
    \phi_j(1)      \left[   \frac{b (q+c_0)}{\gamma} \xi _N(t)   \check{w}_N(1,t)
    - c_1      z _N(1,t) \right]  ,\crr
    \disp \langle \check{w}_{Nt}  (\cdot,t),\phi_j\rangle_{\H}
+ \langle \check{w}_{Nx}(\cdot,t)  , \phi_{jx}\rangle_{\H}
\disp = \phi_j(1)      \left[
     \frac{c_1}{\gamma}      z _N(1,t) -c_0\check{w}_N(1,t)\right]  ,\crr
 \dot{\xi }_N(t)=    -\mbox{sgn}(b)(q+c_0)\gamma    z _N (1,t) \check{w}_N(1,t),\crr
\disp  z _N(\cdot,0)=\gamma\mathcal{P}_N \tilde{w} (\cdot,0), \  \disp \check{w}_N(\cdot,0)=\mathcal{P}_N\check{w}(\cdot,0), \  \xi _N(0)=\gamma^2\tilde{\zeta} (0),\ \   j=1,2\cdots ,N.
 \end{array}\right.
\end{equation}
We divide the left proof into four  parts for clarity.

{\it (1),  A priori estimation.}
%Let
%$\varepsilon$ and $\beta$ be two
%   positive constants that satisfy
%\begin{equation}\label{2023931456}
% 0<\varepsilon<\frac{4c_0}{c_1} \ \ \mbox{and}\ \ \frac{\varepsilon}{2}<\beta<\frac{2 c_0}{c_1}.
% \end{equation}
%  By Young's inequality, we have
% \begin{equation}\label{2023828938NAd92}
% | \check{w}_N(1,t)\tilde{w}_N(1,t)|\leq  \frac{ \beta}{2} \check{w}_N^2(1,t)+
% \frac{1}{2\beta }\tilde{w}_N^2(1,t).
% \end{equation}
Define
\begin{equation}\label{2023931622}
 L_N(t)=\frac12\int_{0}^{1}z_N^2(x,t)+\check{w}_N^2(x,t)dx
 +\frac{|b|}{2\gamma^2}\xi_N^2(t).
 \end{equation}
Multiplying the  first two equations  of  \dref{202310131549} by
$\gamma\tilde{w}_{N,j}(t)$,
 $\check{w}_{N,j}(t)$ respectively
  and summing  for $j = 1, \cdots ,N$,
  and
  multiplying the third    equation of    \dref{202310131549}
 by $\xi_N(t)$, we get
\begin{equation}\label{2023818940Ad93}
 \begin{array}{l}
\dot{L}_N(t)= \disp -\int_{0}^{1} z _{Nx}^2(x,t)+ \check{w} _{Nx}^2(x,t)dx-c_1 z _N^2(1,t)-c_0\check{w}_N^2(1,t)+\frac{c_1}{\gamma}z_N (1,t)\check{w}_N (1,t).
\end{array}
\end{equation}
By Cauchy's  inequality, we get
 \begin{equation}\label{2023931658}
 \begin{array}{l}
\dot{L}_N(t)\leq \disp   -\int_{0}^{1} z _{Nx}^2(x,t)+\check{w}_{Nx}^2(x,t)dx
-\left(c_1-\frac{c_1}{2\gamma}\right) z _N^2(1,t)-
\left(c_0-\frac{c_1}{2\gamma}\right)\check{w}_N^2(1,t)
\leq 0.
\end{array}
\end{equation}
If we choose $\gamma$ large enough such that
 \begin{equation}\label{2023100161043}
  \gamma>\max\left\{\frac12,\frac{c_1}{2c_0}\right\},
  \end{equation}
then, for any $t\geq0$,  there exists a positive constant $\alpha$  such that
\begin{equation}\label{2023951539}
 \begin{array}{l}
\disp  \alpha \int_0^t\int_{0}^{1} z _{Nx}^2(x,s)+\check{w}_{Nx}^2(x,s)dx+  z _N^2(1,s)+
 \check{w}_N^2(1,s)ds+ {L}_N(t) \leq L_N(0)< M_0,
\end{array}
\end{equation}
where
  $M_0$ is a positive constant  that is independent of $\gamma$ and $N$.
Combing \dref{2023951539},
 \dref{2023931622} and \dref{202310131547},  we can conclude that    the solution of system \dref{2023818918Ad92}
 can be extended to $\tilde{w}_{N,j},\check{w}_{N,j}, \tilde{\zeta}_N\in C^1[0, \infty)$.
 %   \begin{equation}\label{20238181107Ad92}
%  0\leq L_N(t)\leq L_N(0)<+\infty,\ \ \forall\ t\geq0,\  N=1,2,\cdots ,
%  \end{equation}
%and
%\begin{equation}\label{20238181614}
%\int_{0}^{\infty}\int_{0}^{1}\tilde{w}_{Nx}^2(x,s)dxds+     c_1\int_{0}^{\infty}\tilde{w}_N^2(1,t)dt\leq F(0) .
%  \end{equation}
 % Moreover,
%for any $t\geq0$ and any  positive integer $N$.

It follows from \dref{2023931622} and  \dref{2023951539} that
\begin{equation}\label{202310131615}
 |\xi_N(t)|<\sqrt{\frac{2 M_{0}}{|b|}}\gamma,\ \ \forall\ t\geq0.
\end{equation}
Moreover, it follows from the \dref{202310131549}   that
\begin{equation}\label{202310231510}
 \gamma(q+c_0)\left|\int_{0}^{t} z_N  (1,s)\check{w}_N(1,s)ds\right| \leq  |\xi_N (0)|+|\xi_N(t)|\leq   \gamma^2|\tilde{\zeta} (0)|+    \sqrt{\frac{2 M_{0}}{|b|}}\gamma .
\end{equation}

 {\it (2),  Weak convergence.}
     \dref{2023951539} implies that,  for any given $T>0$,
  $\{ z _N  \}$  and  $\{\check{w}_N  \}$
  are  bounded
 sequences   in  $L^{2}(0,T;H^1 (0,1))$.
 Therefore,
  there exist    subsequences   of  $\{ z _N \}$  and  $\{\check{w}_N  \}$
    which we still denote respectively by  $\{ z _N \}$ and  $\{\check{w}_N  \}$
   that satisfy
  \begin{equation} \label{2023951542}
  \left\{\begin{array}{l}
   \disp   z _N  \to   z    \ \ \mbox{weakly  in}\ \   \ L^{2}(0,  T; H ^1(0,1) ),\crr
    \disp  \check{w}_N \to  \check{w}   \ \ \mbox{weakly  in}\ \   \ L^{2}(0,  T; H ^1(0,1) ),
   % \disp     z _N(1,\cdot)\to  z (1,\cdot) \ \ \mbox{weakly  in}\ \   \ L^{2}( 0, t ),\crr
 %       \disp    \check{w}_N(1,t)\to \check{w}(1,t) \ \ \mbox{weakly  in}\ \   \ L^{2}( 0, T ),\crr
  %  \disp  z _N(0,\cdot)\to  z (0,\cdot) \ \ \mbox{weakly  in}\ \   \ L^{2}( 0, t),
   \end{array}\right.\mbox{as}\ \ N\to+\infty,
\end{equation}
where $  z (\cdot,t), \check{w}(\cdot,t) \in C(0,T;H ^1(0,1) )$.
 Using \dref{2023951539} again, the sequences
  $\{ z  _N(1,\cdot)\}$  and
  $\{\check{w} _N(1,\cdot)\}$
  are  bounded
    in   $L^{2}(0,T)$. By \dref{2023951542}, there exist subsequences of
  $\{ z  _N(1,\cdot)\}$  and
  $\{\check{w} _N(1,\cdot)\}$
   which we still denote respectively by   $\{ z  _N(1,\cdot)\}$  and
  $\{\check{w} _N(1,\cdot)\}$
   that satisfy
  \begin{equation} \label{2023951542check}
  \left\{\begin{array}{l}
   \disp     z _N(1,\cdot)\to  z (1,\cdot) \ \ \mbox{weakly  in}\ \   \ L^{2}( 0, T ),\crr
        \disp    \check{w}_N(1,\cdot)\to \check{w}(1,\cdot) \ \ \mbox{weakly  in}\ \   \ L^{2}( 0, T ),
   \end{array}\right.\mbox{as}\ \ N\to+\infty.
\end{equation}
 As a consequence of \dref{202310131615}, there exists a subsequence of $\{\xi_N\}$
 which we
still denote   by  $\{\xi_N\}$ that satisfies
\begin{equation}\label{202310231458}
 \xi_N(t)\to \xi(t) \ \ \mbox{as}\ \ N\to+\infty,\ \ \forall\ t\geq0.
\end{equation}

  {\it (3),  Strong  convergence.}
  Let $( z _N(\cdot,t),\xi _{N}(t)) ,   ( z _L(\cdot,t),\xi _{L}(t))$  be two   approximate solutions, and without loss of generality, we assume $N > L$. Set
$$\left\{  \begin{array}{l}
 \disp z _{NL}(\cdot,t) = z _N(\cdot,t)- z _L(\cdot,t)=
\gamma\sum_{j=1}^N[ \tilde{w} _{N,j}(t)- \tilde{w} _{L,j}(t)]\phi_j,\crr
\disp \xi _{NL}(t)=
\xi _{N}(t)-\xi _{L}(t),
\end{array}\right.
$$
 with the understanding that
$ \tilde{w} _{L,j}(t)\equiv 0$ when $j>L$. Then $ z _{NL}(\cdot,t)$, $\check{w}_{NL}(\cdot,t)$   and $\xi _{NL}(t)$ satisfy
 \begin{equation} \label{20231013856}
\left\{  \begin{array}{l}
\disp \langle  z _{NLt}  (\cdot,t),\phi_j\rangle_{\H}
+ \langle  z _{NLx}(\cdot,t)  , \phi_{jx}\rangle_{\H}
\disp =
    \phi_j(1)      \left[  \Gamma_{\gamma}(t)
    - c_1     z _{NL}(1,t) \right]  ,\crr
  \disp \langle \check{w}_{NLt}  (\cdot,t),\phi_j\rangle_{\H}
+ \langle \check{w}_{NLx}(\cdot,t)  , \phi_{jx}\rangle_{\H}
\disp = \phi_j(1)      \left[
     \frac{c_1}{\gamma}      z _{NL}(1,t) -c_0\check{w}_{NL}(1,t)\right]  ,\crr
     \dot{ \xi  }_{NL}(t)=    -\mbox{sgn}(b)\gamma(q+c_0)   \left[ z _N (1,t) \check{w}_N(1,t)- z _L (1,t) \check{w}_L(1,t)\right],\crr
\disp  z _{NL}(\cdot,0)=\gamma\mathcal{P}_N \tilde{w} (\cdot,0)-\gamma\mathcal{P}_L \tilde{w} (\cdot,0), \ \check{w}_{NL}(\cdot,0)= \mathcal{P}_N \check{w} (\cdot,0)- \mathcal{P}_L \check{w} (\cdot,0),\crr
 \xi  _{NL}(0)=0,\  j=1,2\cdots ,N,
 \end{array}\right.
\end{equation}
  where
\begin{equation*}\label{20231013903}
\begin{array}{l}
\Gamma_{\gamma}(t)= \disp  \frac{ b (q+c_0)}{\gamma} \left[  {\xi}_{NL}(t) \check{w}_N(1,t) +{\xi}_L(t)   \check{w}_{NL}(1,t)\right].
\end{array}
 \end{equation*}
  Let
 \begin{equation}\label{2023931622NL1012}
 L_{NL}(t)= \frac{\varepsilon}2\int_{0}^{1} z  _{NL}^2(x,t)dx +\frac{|b|}{2} \xi _{NL}^2(t)
 +  \frac{1} 2\int_{0}^{1}\check{w}_{NL}^2(x,t)dx,
 \end{equation}
  where $\varepsilon $ is a positive constant that is  small enough.
  Multiplying the  first two equations  of  \dref{20231013856} by
$ \varepsilon\gamma[ \tilde{w}  _{N,j}(t)- \tilde{w}  _{L,j}(t)]$ and
 $\check{w}_{N,j}(t)-\check{w}_{L,j}(t)$, respectively,
  and summing  for $j = 1, \cdots ,N$,
  and
  multiplying the third    equation in    \dref{20231013856}
 by $ \xi _{NL}(t)$, we get
   \begin{equation}\label{20231092217}
   \begin{array}{l}
 \disp  \dot{L}_{NL}(t) \leq\disp
    - \varepsilon \| z  _{NLx}(\cdot,t)\|_{\H}^2- \|\check{w}_{NLx}(\cdot,t)\|_{\H}^2
  \disp -c_0 \check{w}_{NL}^2(1,t)-c_1 \varepsilon  z  _{NL}^2(1,t)\crr
  \disp+\left[\frac{c_1}{\gamma}+   \varepsilon (q+c_0)\sqrt{ {2|b| M_{0}} }\right]  |\check{w}_{NL}(1,t) z  _{NL}(1,t)|
  + [I_{1}(t)+I_{2}(t)]|\xi  _{NL}(t)|
  \end{array}
 \end{equation}
 where
 \begin{equation}\label{202310231520}
  \left\{ \begin{array}{l}
  \disp  I_{1}(t)=  |b|\gamma(q+c_0)|z_N(1,t)\check{w}_N(1,t)-z_L(1,t)\check{w}_L(1,t)| ,\crr
  \disp  I_{2}(t)=\frac{\varepsilon |b|(q+c_0)}{\gamma}|\check{w}_N(1,t)z_{NL}(1,t)| .
  \end{array}\right.
 \end{equation}
    For any $T>0$ and any $ 0<t\leq T$, $\xi_{NL}$ is continuous on $[0,t]$. Hence, there exists a
 $t_*\in [0,t]$ such that
 \begin{equation}\label{202310231538}
     |\xi_{NL}(t_*)|=\max_{s\in [0,t]}|\xi_{NL}(s)|.
 \end{equation}
Hence, it follows from \dref{202310231510} that there exists a positive constant $M_1$ that is independent of $N,L$ and $t$ such that
 \begin{equation}\label{202310231529}
  \int_{0}^{t} I_{1}(s)|\xi_{NL}(s)|ds\leq
    |\xi_{NL}(t_*)|\int_{0}^{t}I_1(s)ds\leq
    M_1 |\xi_{NL}(t_*)|.
 \end{equation}
 Combining \dref{202310231538},  \dref{2023951539} and \dref{202310231520}, there exists a positive constant $M_2$ that is independent of $N,L$ and $t$ such that
 \begin{equation}\label{202310231550}
 \begin{array}{l}
 \disp \int_{0}^{t} I_{2}(s)|\xi_{NL}(s)|ds\leq
    |\xi_{NL}(t_*)|\int_{0}^{t}I_2(s)ds\crr
    \disp \leq
    \frac{\varepsilon|b|(q+c_0)}{2\gamma}|\xi_{NL}(t_*)|\int_{0}^{t}\check{w}_N^2(1,s)+z_{NL}^2(1,s) ds
     \leq
    M_2|\xi_{NL}(t_*)|.
    \end{array}
 \end{equation}
    By Young's inequality,  for any $  \delta>0 $, we have
  \begin{equation}\label{202310142306}
    |\check{w}_{NL}(1,t) z  _{NL}(1,t)|\leq \frac{1}{2\delta}\check{w}_{NL}^2(1,t)+
   \frac{\delta}{2} z^2_{NL}(1,t) .
 \end{equation}
We choose
  \begin{equation}\label{202310142308}
  0<\delta<\frac{\sqrt{2}c_1}{(q+c_0)\sqrt{  |b| M_{0}  }},\ \
  0<\varepsilon<\frac{\sqrt{2}c_0\delta}{ (q+c_0)\sqrt{|b| M_{0}}    }
 \end{equation}
 and choose $\gamma$ large enough such that
  \begin{equation}\label{202310142315}
 0<\frac{c_1}{2\gamma }<\min \left\{  c_0 -\frac{\varepsilon(q+c_0)\sqrt{   |b| M_{0} }}{\sqrt{2}\delta},  c_1\varepsilon-\frac{\delta\varepsilon(q+c_0)\sqrt{  |b| M_{0}}  }{\sqrt{2}} \right\}.
 \end{equation}
Substituting the inequalities
\dref{202310231529}, \dref{202310231550},
 \dref{202310142306},  \dref{202310142308} and \dref{202310142315} into \dref{20231092217}, we get
  %\begin{equation}\label{202310142313}
%   \begin{array}{ll}
% \disp  \dot{L}_{NL}(t) \leq&\disp
%    - \varepsilon \| z  _{NLx}(\cdot,t)\|_{\H}^2- \|\check{w}_{NLx}(\cdot,t)\|_{\H}^2
%  \disp -c_0 \check{w}_{NL}^2(1,t)-c_1 \varepsilon  z  _{NL}^2(1,t)\crr
%  &\disp+\left[\frac{c_1}{\gamma}+   \varepsilon (q+c_0)\sqrt{ {2|b| M_{0}} }\right]  \check{w}_{NL}(1,t) z  _{NL}(1,t)+ {C}_T \xi  _{NL}(t) ,
%  \end{array}
% \end{equation}
% Choosing $\gamma$ large enough,  \dref{20231092217} yields
%%  \begin{equation}\label{20231092228}
%% \dot{L}_{NL}(t)\leq -\varepsilon\| z  _{NLx}(\cdot,t)\|_{\H}^2- \|\check{w}_{NLx}(\cdot,t)\|_{\H}^2
%%  + (M_1+M_2) |\xi  _{NL}(t_*)|  .
%% \end{equation}
%%Consequently,
 \begin{equation}\label{20231013939}
  {L}_{NL}(t)+\int_{0}^{t}\varepsilon\| z  _{NLx}(\cdot,s)\|_{\H}^2+ \|\check{w}_{NLx}(\cdot,s)\|_{\H}^2ds\leq  {L}_{NL}(0)+
     (M_1+M_2)  |\xi  _{NL}(t_*)|
 \end{equation}
 for any $t\in [0,T]$.
 Since
   $\xi_{NL}(t_*)\to 0$ as $N,L\to+\infty$,
it follows from   \dref{20189101501Ad92},  \dref{202310131547} and \dref{20231013939} that
  \begin{equation}\label{20231013944}
  {L}_{NL}(t) +\int_{0}^{t}\varepsilon\| z  _{NLx}(\cdot,s)\|_{\H}^2+ \|\check{w}_{NLx}(\cdot,s)\|_{\H}^2ds\to 0\ \ \mbox{as}\ \ N,L\to+\infty,\ \ \forall\ t\in[0,T],
 \end{equation}
 which, together with \dref{202310131547} and \dref{2023931622NL1012}, implies that  $\{\tilde{w}_N(\cdot,t) \}$ and $\{\check{w}_N(\cdot,t) \}$
  are   two  Cauchy sequences  in    $ \H$;  $\{\tilde{w}_N  \}$ and $\{\check{w}_N  \}$
  are   two  Cauchy sequences  in    $ L^2(0,t;H^1(0,1))$. Notice that $z_{NL}(0,t)=\check{w}_{NL}(0,t)=0$, it follows from the Sobolev trace-embedding that
   \begin{equation}\label{20231014901}
    z_{NL}^2(1,t)\leq  \int _0^1z  _{NLx}^2(x,t)dx,\ \ \ \check{w}_{NL}^2(1,t)\leq   \int_0^1\check{w}_{NLx}^2(x,t)dx,
   \end{equation}
which, together with \dref{202310131547} and \dref{20231013944}, implies that  $\{\tilde{w}_N(1,t) \}$ and $\{\check{w}_N(1,t) \}$
  are   two  Cauchy sequences  in   $L^2(0,t)$ for any $t\in [0,T]$.
  In particular,  we have
   \begin{equation}\label{20239201457}
 \int_0^t
  \tilde{w}_N (1,s) \check{w}_N(1,s)ds
 \to \int_0^t
  \tilde{w}  (1,s) \check{w} (1,s)ds  \ \ \mbox{as}\ \ N\to+\infty.
\end{equation}
Moreover,
 \dref{202310131615} and  \dref{202310231458}  yield
 \begin{equation}\label{20239201610}
  \lim_{N\to+\infty }\int_0^t\xi _N(s)   \check{w}_N(1,s)ds=
   \int_0^t\xi  (s)   \check{w} (1,s) ds.
  \end{equation}

 {\it (4),  Passage to the limit.}
 For any  $0<t<T$,  by integrating  \dref{2023818918Ad92} from $0$ to $t$,
we obtain
 \begin{equation} \label{20239201614}
\left\{ \begin{array}{l}
\disp \langle \tilde{w}_N   (\cdot,t),\phi_j\rangle_{\H}
+ \int_0^t\langle \tilde{w}_{Nx}(\cdot,\tau)  , \phi_{jx}\rangle_{\H}d\tau
\disp =\langle \tilde{w}_N   (\cdot,0),\phi_j\rangle_{\H}\crr
\disp  + \phi_j(1) \int_0^t     \left[   b (q+c_0) \tilde{\zeta}_N (\tau)\check{w}_N(1,\tau)
    - c_1     \tilde{w}_N (1,\tau)     \right]d\tau  ,\crr
  \tilde{\zeta}_N  (t)=\disp \tilde{\zeta}  (0)  -\mbox{sgn}(b)(q+c_0) \int_0^t \tilde{w} _N (1,\tau)\check{w}_N(1,\tau)d\tau,\crr
    \disp \langle \check{w}_N   (\cdot,t),\phi_j\rangle_{\H}
+ \int_0^t\langle \check{w}_{Nx}(\cdot,\tau)  , \phi_{jx}\rangle_{\H}d\tau
\disp =\langle \check{w}_N   (\cdot,0),\phi_j\rangle_{\H}\crr
\disp   +\phi_j(1)   \int_0^t      \left[
     c_1     \tilde{w}_N (1,\tau) -c_0\check{w}_N (1,\tau)\right]  d\tau.
    \end{array}\right.
\end{equation}
In view of \dref{2023951542}, \dref{2023951542check},   \dref{202310231458}, \dref{20239201457},
\dref{20239201610}, \dref{20189101501Ad92}  and \dref{202310131547}, we  pass   to the limit  as  $N\to+\infty$ in
\dref{20239201614} to get
  \begin{equation} \label{20239161659}
\left\{  \begin{array}{l}
\disp \langle \tilde{w}   (\cdot,t),\phi_j\rangle_{\H}
+ \int_0^t\langle \tilde{w}_{x}(\cdot,\tau)  , \phi_{jx}\rangle_{\H}d\tau
\disp =\langle \tilde{w}   (\cdot,0),\phi_j\rangle_{\H}\crr
\disp  + \phi_j(1) \int_0^t     \left[   b (q+c_0) \tilde{\zeta} (\tau)\check{w}(1,\tau)
    -c_1     \tilde{w} (1,\tau)     \right]d\tau  ,\crr
  \tilde{\zeta}  (t)=\disp \tilde{\zeta} (0)  -\mbox{sgn}(b)(q+c_0) \int_0^t \tilde{w} (1,\tau)\check{w}(1,\tau)d\tau,\crr
  \disp \langle \check{w}   (\cdot,t),\phi_j\rangle_{\H}
+ \int_0^t\langle \check{w}_{x}(\cdot,\tau)  , \phi_{jx}\rangle_{\H}d\tau
\disp =\langle \check{w}   (\cdot,0),\phi_j\rangle_{\H}\crr
\disp      +\phi_j(1)   \int_0^t      \left[
     c_1     \tilde{w} (1,\tau) -c_0\check{w} (1,\tau)\right]  d\tau,
 \end{array}\right. \
 j=1,2\cdots  ,
\end{equation}
which implies   that $ (\tilde{w},\tilde{\zeta},\check{w} )  $ is a weak solution of the system \dref{20238292127}.
 Since the weak convergence \dref{2023951542} and   \dref{2023951542check},
     \dref{2023951709}  follows from  \dref{2023951539}, \dref{202310131547}  and    the weakly lower semicontinuity of the norm $\|\cdot\|_{L^2(0,\infty)}$  \cite[p.6, Theorem 1.1.1]{EvansweakConv}.
    \end{proof}

\begin{lemma}\label{Lm2023951644}
Suppose that  $b\neq 0$,  $c_0 >0$ and $c_1>0$. Then for any initial state
$(\tilde{w}(\cdot,0),\check{w}(\cdot,0),\tilde{\zeta}(0))\in \H^2\times\R$,
system
  \dref{20238292127} admits a   weak solution $(\tilde{w}(\cdot,t),\check{w}(\cdot,t),\tilde{\zeta}(t))\in C(0,\infty;\H^2\times\R)$ such that
\begin{equation}\label{2023951645}
   |\tilde{\zeta}(t)-  {\zeta}_* |+\|(\tilde{w}(\cdot,t),\check{w}(\cdot,t))\|_{\H^2} \to 0 \ \ \mbox{as}\ \ t\to+\infty,
\end{equation}
%and
%\begin{equation}\label{2023951646}
% \tilde{\zeta}(t) \to  {\zeta}_* \ \ \mbox{as}\ \ t\to+\infty,
% \end{equation}
 where $ {\zeta}_*$  is a constant that may not be $0$.

\end{lemma}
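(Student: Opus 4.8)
Since Lemma \ref{Lm2023828938Ad92} already delivers the weak solution of \dref{20238292127} together with the dissipation bound \dref{2023951709}, the plan is to read off from \dref{2023951709} enough integrability to pin down, one at a time, the three limits in \dref{2023951645}. First I would observe that \dref{2023951709} gives $\tilde w(1,\cdot),\check w(1,\cdot)\in L^2(0,\infty)$, hence $\tilde w(1,\cdot)\check w(1,\cdot)\in L^1(0,\infty)$; integrating the $\tilde\zeta$-equation of \dref{20238292127},
$$\tilde\zeta(t)=\tilde\zeta(0)-(q+c_0)\,\mbox{sgn}(b)\int_0^t\tilde w(1,s)\check w(1,s)\,ds,$$
so the right-hand side converges absolutely and $\tilde\zeta(t)\to\zeta_*$ for a finite constant $\zeta_*$ (which there is no reason to expect to be $0$). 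In particular $M_\zeta:=\sup_{t\ge0}|\tilde\zeta(t)|<\infty$. This settles the $\tilde\zeta$-part of \dref{2023951645} and, crucially, makes the nonlinear coefficient appearing in the $\tilde w$-equation uniformly bounded.

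Next I would establish $\|\tilde w(\cdot,t)\|_{\H}\to0$ and $\|\check w(\cdot,t)\|_{\H}\to0$. Writing $E_1(t)=\tfrac12\|\tilde w(\cdot,t)\|_{\H}^2$ and $E_2(t)=\tfrac12\|\check w(\cdot,t)\|_{\H}^2$, the energy identities for the two heat components (justified at the level of the Galerkin approximations and passed to the limit as in the proof of Lemma \ref{Lm2023828938Ad92}) read
$$\dot E_1=-\|\tilde w_x\|_{\H}^2-c_1\tilde w^2(1,t)+b(q+c_0)\tilde\zeta(t)\tilde w(1,t)\check w(1,t),$$
$$\dot E_2=-\|\check w_x\|_{\H}^2-c_0\check w^2(1,t)+c_1\tilde w(1,t)\check w(1,t)+q\tilde w(0,t)\check w(0,t)+q^2\tilde w(0,t)\!\int_0^1 e^{qx}\check w(x,t)\,dx.$$
Using the one-dimensional trace/Poincar\'e inequalities $g^2(0)+\|g\|_{\H}^2\le C\big(g^2(1)+\|g_x\|_{\H}^2\big)$ applied to $\tilde w$ and $\check w$ together with \dref{2023951709}, I get $\tilde w(0,\cdot),\|\tilde w(\cdot,\cdot)\|_{\H},\|\check w(\cdot,\cdot)\|_{\H}\in L^2(0,\infty)$. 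Then, invoking $M_\zeta<\infty$, Cauchy's inequality, and $\big|\int_0^1 e^{qx}\check w\,dx\big|\le C\|\check w\|_{\H}$, each right-hand side can be bounded in the form $\dot E_i\le-\mu_i E_i+h_i$ with $\mu_i>0$ and $h_i\in L^1(0,\infty)$ (absorbing a small multiple of $\|\check w_x\|_{\H}^2$ into the $-\|\check w_x\|_{\H}^2$ term where the trace $\check w(0,t)$ has to be controlled). Gronwall's inequality then gives $E_i(t)\le e^{-\mu_i t}E_i(0)+\int_0^t e^{-\mu_i(t-s)}h_i(s)\,ds\to0$ as $t\to+\infty$, which is precisely the decay lemma (Lemma \ref{Lm20231004}) already used in the proof of Lemma \ref{Lm2023811150}. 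Combining this with $\tilde\zeta(t)\to\zeta_*$ yields \dref{2023951645}.

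The main obstacle is the bookkeeping in the second step rather than any new idea. Two points require care: (i) the nonlinear term $b(q+c_0)\tilde\zeta\,\tilde w(1,t)\check w(1,t)$ in $\dot E_1$ is harmless only because $\tilde\zeta$ is already known to be bounded, which is why the $\tilde\zeta$-limit must be — and can be — obtained first and independently of the state decay; and (ii) since $\check w_x(0,t)=-q\tilde w(0,t)\neq0$, the boundary contribution $-\check w(0,t)\check w_x(0,t)=q\tilde w(0,t)\check w(0,t)$ genuinely involves the trace $\check w(0,t)$, so its estimate has to be split via $\check w^2(0,t)\le 2\check w^2(1,t)+2\|\check w_x\|_{\H}^2$ with a coefficient small enough not to spoil the $-\|\check w_x\|_{\H}^2$ dissipation. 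Once these are handled, everything else reduces to the routine ``$\dot E\le-\mu E+L^1$'' argument.
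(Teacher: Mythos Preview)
Your argument is correct, but it takes a different route from the paper. The paper's proof is shorter and more modular: it observes that the $(\tilde w,\tilde\zeta)$-subsystem of \dref{20238292127} is \emph{exactly} the error system \dref{20237122040frac1b} with the particular input $u_0(t)=-(q+c_0)\check w(1,t)$, and since \dref{2023951709} gives $\check w(1,\cdot)\in L^2(0,\infty)\subset W$, Lemma~\ref{Lm2023811150} applies as a black box and yields both $\|\tilde w(\cdot,t)\|_{\H}\to0$ and $\tilde\zeta(t)\to\zeta_*$ (together with $\tilde w(0,\cdot),\tilde w(1,\cdot)\in L^2(0,\infty)$) in one stroke. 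The $\check w$-decay is then dispatched by noting that the $\check w$-subsystem is an exponentially stable heat equation driven by the $L^2$ inputs $q^2e^{qx}\tilde w(0,t)$, $-q\tilde w(0,t)$, $c_1\tilde w(1,t)$, and citing \cite[Lemma~3.1]{FengAnnual}.

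Your approach, by contrast, is more self-contained: you extract $\tilde\zeta(t)\to\zeta_*$ first by direct integration of the ODE (this is actually cleaner than the monotone-$F$/sign argument buried inside Lemma~\ref{Lm2023811150}), and then run separate energy estimates for $E_1$ and $E_2$. Two remarks on the bookkeeping you flag. For (i), you are right that the boundedness of $\tilde\zeta$ must precede the $E_1$-estimate; the paper avoids this ordering issue only because Lemma~\ref{Lm2023811150} handles $\tilde w$ and $\tilde\zeta$ jointly. For (ii), your concern is slightly overstated: since \dref{2023951709} already gives $\check w_x\in L^2(0,\infty;\H)$ and $\check w(1,\cdot)\in L^2(0,\infty)$, the trace inequality immediately yields $\check w(0,\cdot)\in L^2(0,\infty)$, so the cross term $q\tilde w(0,t)\check w(0,t)$ is in $L^1(0,\infty)$ outright and can go straight into $h_2$ without any $\delta$-splitting against $\|\check w_x\|_{\H}^2$. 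Either way the ``$\dot E\le-\mu E+L^1$'' conclusion stands. The trade-off is that the paper reuses prior machinery and an external reference, while you redo the estimates by hand; both are legitimate.
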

\begin{proof}
 By  Lemma \ref{Lm2023828938Ad92}, system
  \dref{20238292127} admits a  weak solution $(\tilde{w}(\cdot,t),\check{w}(\cdot,t),\tilde{\zeta}(t))\in C(0,\infty;\H^2\times\R)$ such that
 \dref{2023951709} holds.
 % Now, we prove   the stability.
If we let $u_0(t)=-(q + c_0 )\check{w}(1,t)$, then
$u_0\in L^2(0,\infty)$.    Since the $\tilde{w},\tilde{\zeta}$-subsystem of \dref{20238292127}    happens to be system \dref{20237122040frac1b},     by exploiting Lemma \ref{Lm2023811150}, there exists a constant $\zeta_*$ that may not be zero  such that
 \begin{equation}\label{2023951705}
 \|\tilde{w}(\cdot,t)\|_{\H} +|\tilde{\zeta}(t)-\zeta_*|\to 0 \ \ \mbox{as}\ \ t\to+\infty.
 \end{equation}
Moreover, $\tilde{w}(0,\cdot),\tilde{w}(1,\cdot)\in L^2(0,\infty) $.
Since the
$\check{w}$-subsystem of \dref{20238292127}
is a well-known exponentially stable heat equation with inhomogeneous terms $q^2e^{qx}\tilde{w}(0,t)$,$-q\tilde{w}(0,t)$ and $c_1\tilde{w}(1,t)$,
 we can  deduce  from   \cite[Lemma 3.1]{FengAnnual} that   $\|\check{w}(\cdot,t)\|_{\H} \to 0$   as $t\to+\infty$.
%Therefore, if we define
%\begin{equation}\label{2023931622Ad95}
% P (t)=\frac{1} 2\int_{0}^{1}\check{w} ^2(x,t)dx,
% \end{equation}
% then the $\check{w}$-subsystem of \dref{20238292127} satisfies
%    \begin{equation}\label{2023818940Ad95**}
%     \begin{array}{ll}
%\dot{P} (t)&\disp = \disp -\int_{0}^{1} \check{w}_{x}^2(x,t)dx
%\disp+
%q \check{w} (0,t)\tilde{w}(0,t)-c_0 \check{w}^2(1,t)+c_1 \check{w}(1,t)\tilde{w}(1,t)\crr
%&\leq  \disp -\omega_1 P(t)
%\disp+C\left[ \tilde{w} ^2(0,t) +
%      \check{w} ^2(1,t)+  \tilde{w} ^2(1,t)  \right] ,
%%q \check{w} (0,t)\tilde{w}(0,t)-c_0 \check{w}^2(1,t)+c_1 \check{w}(1,t)\tilde{w}(1,t),
%\end{array}
%\end{equation}
%where  $\omega_1$ and $C$ are    positive constants.
%% \begin{equation}\label{20238291501Ad95}
%% \begin{array}{l}
%% \disp f(t)=  \frac{q}2  \left[\check{w} ^2(0,t)+ \tilde{w} ^2(0,t)\right]+
%%     \frac{c_1}2\left[\check{w} ^2(1,t)+  \tilde{w} ^2(1,t)  \right].
%%     \end{array}
%%\end{equation}
%Thanks to  \dref{2023951709} and the fact $\tilde{w}(0,\cdot) \in L^2(0,\infty) $, $f\in L^1(0,\infty)$.
%  Applying  Lemma \ref{Lm20231004} in Appendix to the inequality \dref{2023818940Ad95**}, we get
%  $\lim_{t\to+\infty} P(t)=0$,
%% \begin{eqnarray}\label{APPP5Ad921}
%% \lim_{t\to\infty} P(t)=0,
%%\end{eqnarray}
% which, together with \dref{2023931622Ad95},   implies $\|\check{w}(\cdot,t)\|_{\H}\to 0$ as $t\to+\infty$.
  \end{proof}

{\it Proof of Theorem \ref{th20237141510}.}
By Lemma \ref {Lm2023951644},
the transformed system
  \dref{20238292127} admits a  weak solution
  $(\tilde{w}(\cdot,t),\check{w}(\cdot,t),\tilde{\zeta}(t))\in C(0,\infty;\H^2\times\R)$
 such that  \dref{2023951645} holds.
 In order to relate the transformed system
  \dref{20238292127} to the  original system   \dref{2023720848}, we define the transformation
 $ \Pi : \H\to \H $ by
  \begin{equation}\label{2023813837}
( \Pi f)(x)=f(x)+q\int_0^xe^{q(x-s)}f(s)ds,\     \ \forall\ f\in \H .
 \end{equation}
 Then, $ \Pi \in \mathcal{L}(\H )$ is invertible and more specifically
   \begin{equation}\label{2023813841}
( \Pi ^{-1}g)(x)=g(x)-q\int_0^xg(s)ds,\    \ \forall\ g\in \H .
 \end{equation}
In terms of the constant $b$, we define the transformation
 $ \Upsilon_b : \R\to \R$ by
  \begin{equation}\label{2023951821}
 \Upsilon_b (s)  = \frac1b-s,\ \    \forall\ s\in \R.
 \end{equation}
 Clearly,
 %  $ \Upsilon_b $ is invertible and more specifically
$  \Upsilon_b ^{-1}  =\Upsilon_b$.

 Let
 \begin{equation}\label{2023951826}
\begin{pmatrix}
  w(\cdot,t) \\
  \hat{w}(\cdot,t) \\
  \zeta(t)
\end{pmatrix}
=\begin{pmatrix}
                1&-1&0 \\
                0&\Pi&0 \\
                0&0&\Upsilon_b
              \end{pmatrix}^{-1}\begin{pmatrix}
  \tilde{w}(\cdot,t) \\
  \check{w}(\cdot,t) \\
  \tilde{\zeta}(t)
\end{pmatrix}
=\begin{pmatrix}
                1&\Pi^{-1}&0 \\
                0&\Pi^{-1}&0 \\
                0&0&\Upsilon_b
              \end{pmatrix} \begin{pmatrix}
  \tilde{w}(\cdot,t) \\
  \check{w}(\cdot,t) \\
  \tilde{\zeta}(t)
\end{pmatrix}.
 \end{equation}
A straightforward computation shows that such a defined
  $( {w}(\cdot,t),\hat{w}(\cdot,t), {\zeta}(t))\in C(0,\infty;\H^2\times\R)$
is a weak solution of system  \dref{2023720848}. Moreover,
 we can obtain the convergence    \dref{2023720844w}
   directly by combining   \dref{2023951826} and \dref{2023951645}.
  \hfill$\Box$

  \section{Proof of Theorem \ref{th2023829953}}\label{PfTh3}
We first consider the following transformed system:
    \begin{equation}\label{2023952151}
\left\{\begin{array}{l}
 \tilde{z}_{t}(x,t)=\tilde{z} _{xx}(x,t)   ,
 \crr
 \tilde{z} _{x} (0,t)= 0 ,\
  \disp\tilde{z} _{x}(1,t)=- b \tilde{\zeta}(t)  u_0(t)-c_1\tilde{z} (1,t),    \crr
\disp\dot{\tilde{\zeta}}(t)=   \mbox{sgn}(b)  \tilde{z} (1,t)\left[d(t)-  (q+c_0)\check{z}(1,t)  \right] ,\ \ d\in L^{\infty}(0,\infty),\crr
  \check{z}_{t}(x,t)=\check{z}_{xx}(x,t)   ,\crr
 \check{z}_{x} (0,t)=-q\tilde{z}(0,t),\
\disp  \check{z}_{x}(1,t)= - c_0  \check{z}(1,t) +c_1  \tilde{z}(1,t)  .
%u_0(t)= d(t)-  (q+c_0)\check{z}(1,t) ,\ \ d\in L^{\infty}(0,\infty).
 \end{array}\right.
\end{equation}
%where $d\in L^{2}_{\rm loc}(0,\infty)$.

\begin{lemma}\label{Lm2023952250}
Let $b\neq 0$,  $c_0 >0$,  $c_1>0$ and
$d\in L^{\infty}(0,\infty)$.
  Then for any initial state
$(\tilde{z}(\cdot,0),\check{z}(\cdot,0),\tilde{\zeta}(0))\in \H^2\times\R$,
system
  \dref{2023952151} admits a   weak solution $(\tilde{z}(\cdot,t),\check{z}(\cdot,t),\tilde{\zeta}(t))\in C(0,\infty;\H^2\times\R)$ such that
  \begin{equation}\label{2023961016}
    \int_0^{\infty}\left[\check{z}^2(0,t) + \tilde{z}^2(0,t) + \tilde{z}^2(1,t)+\check{z}^2(1,t)  \right]
    dt<+\infty,
    \end{equation}
  and
\begin{equation}\label{2023952251}
 \sup_{t\geq0} \left[ \|(\tilde{z}(\cdot,t),\check{z}(\cdot,t))\|_{\H^2}+|\tilde{\zeta}(t) |\right]< +\infty.
\end{equation}
If we assume further that
  there exists a time $\tau>0$ such that
%$d\in L^{\infty}(0,\infty)$ satisfies
\begin{equation}\label{202381809*d}
      \lim_{t\to+\infty}\int_{t}^{t+\tau}
       d (s) ds \neq0,
\end{equation}
then
 \begin{equation}\label{2023952252}
  \tilde{\zeta}(t) \to 0 \ \ \mbox{as}\ \ t\to+\infty.
  \end{equation}

\end{lemma}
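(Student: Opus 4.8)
I would follow the pattern of Lemmas~\ref{Lm2023828938Ad92} and~\ref{Lm2023951644}, adding one remark at the end to accommodate the exogenous signal $d$. Throughout, I write $u_0(t):=d(t)-(q+c_0)\check z(1,t)$ for the controller in \eqref{2023952151}, so that the last two equations of \eqref{2023952151} read $\tilde z_x(1,t)=-b\tilde\zeta(t)u_0(t)-c_1\tilde z(1,t)$ and $\dot{\tilde\zeta}(t)=\mathrm{sgn}(b)\,\tilde z(1,t)\,u_0(t)$.

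\emph{Step 1 (well-posedness and the estimates \eqref{2023961016}, \eqref{2023952251}).} I would construct $(\tilde z,\check z,\tilde\zeta)$ by the Galerkin scheme used for Lemma~\ref{Lm2023828938Ad92}: project \eqref{2023952151} onto the span of the first $N$ basis functions and track, for the $N$-th approximation, the functional
\begin{equation*}
L_N(t)=\frac{1}{2\varepsilon}\Big(\|\tilde z_N(\cdot,t)\|_{\H}^2+|b|\,\tilde\zeta_N^2(t)\Big)+\frac12\|\check z_N(\cdot,t)\|_{\H}^2,
\end{equation*}
with a small weight $\varepsilon>0$ (equivalently the rescaling $z_N=\gamma\tilde z_N$, $\xi_N=\gamma^2\tilde\zeta_N$ with $\gamma$ large, exactly as in Lemma~\ref{Lm2023828938Ad92}). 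The decisive point is that the update law carries precisely the factor $u_0(t)$ that enters the flux $\tilde z_x(1,t)$, so the indefinite term $-\,b\tilde\zeta_N u_0\,\tilde z_N(1,t)$ produced by the $\tilde z_N$-equation is cancelled identically by $b\tilde\zeta_N\tilde z_N(1,t)u_0$ produced by the $\tilde\zeta_N$-equation (using $|b|\,\mathrm{sgn}(b)=b$); in particular the \emph{bounded but non-decaying} signal $d$ drops out of $\dot L_N$ altogether. What is left is a strictly negative ``diffusion + damping'' part together with the linear coupling terms: the $x=1$ coupling $c_1\check z_N(1,t)\tilde z_N(1,t)$, the $x=0$ coupling $q\,\check z_N(0,t)\tilde z_N(0,t)$ generated by $\check z_x(0,t)=-q\tilde z(0,t)$, and any distributed coupling through $\tilde z_N(0,t)$. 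These are absorbed by Young's inequality together with the trace inequality $f^2(0)\le M[f^2(1)+\|f_x\|_{\H}^2]$ of \cite[p.258, Theorem~1]{Evans} and a Poincar\'e-type estimate for the damped $\check z_N$-part: choosing the Young constants small first and then $\varepsilon$ (or $1/\gamma$) small yields $\dot L_N(t)\le-\alpha\big[\|\tilde z_{Nx}\|_{\H}^2+\tilde z_N^2(1,t)+\|\check z_{Nx}\|_{\H}^2+\check z_N^2(1,t)+\|\check z_N\|_{\H}^2\big]$ for some $\alpha>0$. Hence $L_N(t)\le L_N(0)$ (global existence and the uniform bounds in \eqref{2023952251}) and the space--time integral of the bracket is bounded by $\alpha^{-1}L_N(0)$; the trace inequality then also bounds $\int_0^\infty[\tilde z_N^2(0,t)+\check z_N^2(0,t)]\,dt$. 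Passing to the limit exactly as in Lemma~\ref{Lm2023828938Ad92} --- weak $L^2_{\mathrm{loc}}(0,\infty;H^1(0,1))$ convergence from the energy bounds, strong $L^2_{\mathrm{loc}}$ convergence of the boundary traces via the rescaled Cauchy-sequence estimate (needed to pass to the limit in the quadratic term $\tilde z_N(1,\cdot)\check z_N(1,\cdot)$), then passage to the limit in the integrated weak formulation --- produces a weak solution $(\tilde z,\check z,\tilde\zeta)\in C(0,\infty;\H^2\times\R)$ satisfying \eqref{2023961016} and \eqref{2023952251}.

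\emph{Step 2 (the persistent-excitation conclusion).} By \eqref{2023961016}, $\check z(1,\cdot)\in L^2(0,\infty)$, and $d\in L^\infty(0,\infty)$ by hypothesis, so $u_0=d-(q+c_0)\check z(1,\cdot)\in W$. The $(\tilde z,\tilde\zeta)$-subsystem of \eqref{2023952151} is literally system \eqref{20237122040frac1b} with this $u_0$, so Lemma~\ref{Lm2023811150} applies and gives, in particular, $\tilde\zeta(t)\to\zeta_*$ for some constant $\zeta_*$. Now assume \eqref{202381809*d}. Since $\check z(1,\cdot)\in L^2(0,\infty)$, for each fixed $\tau>0$ the window integral satisfies $\big|\int_t^{t+\tau}\check z(1,s)\,ds\big|\le\sqrt{\tau}\,\big(\int_t^{t+\tau}\check z^2(1,s)\,ds\big)^{1/2}\to 0$ as $t\to+\infty$; therefore
\begin{equation*}
\lim_{t\to+\infty}\int_t^{t+\tau}u_0(s)\,ds=\lim_{t\to+\infty}\int_t^{t+\tau}d(s)\,ds\neq0,
\end{equation*}
i.e.\ $u_0$ meets the persistent-excitation condition \eqref{202381809*}. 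Lemma~\ref{Lm2023811150} then yields $\tilde\zeta(t)\to0$ as $t\to+\infty$, which is \eqref{2023952252}.

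\emph{Where the difficulty lies.} The whole weight of the argument is in Step~1: first, recognizing and exploiting the exact cancellation of the $d$-bearing cross term --- this is what allows a Lyapunov functional to work at all even though $d$ is merely bounded and need not decay; and second, ordering the weight $\varepsilon$ (or $\gamma$), the Young constants and the trace/Poincar\'e constants so that the residual $x=0$, $x=1$ and distributed couplings are dominated, and then pushing the quadratic boundary term $\tilde z_N(1,\cdot)\check z_N(1,\cdot)$ through the Galerkin limit, which forces the strong convergence of the boundary traces and the rescaled Cauchy-sequence estimate of Lemma~\ref{Lm2023828938Ad92}. Step~2 is then a short addendum to Lemma~\ref{Lm2023811150}.
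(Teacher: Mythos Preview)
Your proposal is correct and follows essentially the same route as the paper: a Galerkin argument with the weighted Lyapunov functional (the exact cancellation of the $d$-bearing cross term you single out is precisely the point that makes the a~priori estimate go through), yielding existence together with \eqref{2023961016}--\eqref{2023952251}, and then Lemma~\ref{Lm2023811150} applied to the $(\tilde z,\tilde\zeta)$-subsystem with $u_0=d-(q+c_0)\check z(1,\cdot)\in W$ for the persistent-excitation conclusion. Two minor remarks: system~\eqref{2023952151} carries no distributed source in the $\check z$-equation (unlike~\eqref{20238292127}), and with the paper's Galerkin basis $\phi_j(0)=0$ the $x=0$ boundary couplings you list vanish identically, so Step~1 is in fact a little simpler than you sketch; also, the paper obtains \eqref{2023952251} by first proving decay of $\tilde z$ and $\check z$ (via Lemma~\ref{Lm2023811150} and the exponential stability of the $\check z$-subsystem), whereas your direct bound $L_N(t)\le L_N(0)$ is a shorter path to the same boundedness.
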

 \begin{proof}
   By the  Galerkin approximation,
     for any initial state
$(\tilde{z}(\cdot,0),\check{z}(\cdot,0),\tilde{\zeta}(0))\in \H^2\times\R$,
system
\dref{2023952151}  admits a   weak solution $(\tilde{z}(\cdot,t),\check{z}(\cdot,t),\tilde{\zeta}(t))\in C(0,\infty;\H^2\times\R)$ such that
 \begin{equation}\label{2023951709zz}
   \int_{0}^{\infty}  \int_{0}^{1}\tilde{z}_{x}^2(x,t)+\check{z}_{x}^2(x,t)dx+\tilde{z}^2(1,t)+\check{z}^2(1,t)dt<+\infty,
 \end{equation}
which leads to \dref{2023961016} directly due to the Sobolev trace-embedding.
 Since the proof    is almost the same as Lemma  \ref{Lm2023828938Ad92}, we omit   it due to the space limitation. We only consider the stability.

Let  $u_0(t)=d(t)-  (q+c_0)\check{z}(1,t)$.
Since  \dref{2023951709zz} and \dref{2023952151},    we have
  $d\in L^{\infty}(0,\infty)$ and $\check{z}(1,\cdot)\in L^2(0,\infty)$.
  By exploiting
Lemma \ref{Lm2023811150} that $\tilde{z}(0,\cdot)\in L^2(0,\infty)$ and
  \begin{equation}\label{2023811157zz}
 \|\tilde{z}(\cdot,t)\|_{\H} +|\tilde{\zeta} (t)- \zeta_*|\to 0 \ \ \mbox{as}\ \ t\to+\infty,
 \end{equation}
 where $\zeta_*$ is a constant that may not be zero.
 Since the $\check{z}$-subsystem of \dref{2023952151} is a well-known exponentially stable heat equation with inhomogeneous terms $-q\tilde{z}(0,t)$ and $c_1\tilde{z}(1,t)$,
 we can conclude from   \cite[Lemma 3.1]{FengAnnual} that   $\|\check{z}(\cdot,t)\|_{\H} \to 0$   as $t\to+\infty$.
%  \begin{equation}\label{20239211535}
% \|\check{z}(\cdot,t)\|_{\H}^2 \to 0 \ \ \mbox{as}\ \ t\to+\infty,
% \end{equation}
As a result, the boundedness  \dref{2023952251}  follows immediately from
  \dref{2023811157zz}.

By \dref{2023951709zz},   it follows that
\begin{equation}\label{20239211456}
      \lim_{t\to+\infty}\int_{t}^{t+\tau}
       \check{z}^2(1,s) ds=0 ,\ \ \forall\ \tau>0.
\end{equation}
H\"{o}lder's inequality yields
\begin{equation}\label{20239211511}
  \int_{t}^{t+\tau}
       |\check{z} (1,s)| ds  \leq  \sqrt{\tau} \left(\int_{t}^{t+\tau}
       \check{z}^2(1,s) ds\right)^{1/2}  ,\ \ \forall\ \tau>0.
\end{equation}\
Consequently, it follows from \dref{20239211456},  \dref{20239211511}  and the assumption \dref{202381809*d} that
\begin{equation}\label{202396802}
      \lim_{t\to+\infty}\int_{t}^{t+\tau}
       u_0 (s) ds=\lim_{t\to+\infty}\int_{t}^{t+\tau}
      d (s) ds \neq0.
\end{equation}
 Note that the $\tilde{z},\tilde{\zeta}$-subsystem of \dref{2023952151} is equivalent to   system \dref{20237122040frac1b},  we employ   Lemma \ref{Lm2023811150}   to get  $\zeta_*=0$. So \dref{2023952252} holds.
   \end{proof}

{\it Proof of Theorem \ref{th2023829953}.}
Let $d(t)=v_x(1,t)$, where
$v_x(1,t)$  is given by
  \dref{2023828927}. Owing to the assumption \dref{2023829957},
  $\|d\|_{L^{\infty}(0,\infty)}<+\infty$ and
   \begin{equation}\label{2023961052}
   \sup_{t\geq 0}\|v(\cdot,t)\|_{\H}<+\infty,
   \end{equation}
  where $v$ is given by \dref{2023829858}.
By Lemma \ref {Lm2023952250},
the transformed system
  \dref{2023952151} admits a   weak solution
  $(\tilde{z}(\cdot,t),\check{z}(\cdot,t),\tilde{\zeta}(t))\in C(0,\infty;\H^2\times\R)$
 such that  \dref{2023961016} and \dref{2023952251}   hold.
In terms of the operators $\Pi$ and  $\Upsilon_b$ that are given   by \dref{2023813837} and \dref{2023951821} respectively, we define
 \begin{equation}\label{2023952235}
\begin{pmatrix}
  z(\cdot,t) \\
  \hat{z}(\cdot,t) \\
  \zeta(t)
\end{pmatrix}
 =\begin{pmatrix}
                1&\Pi^{-1}&0 \\
                0&\Pi^{-1}&0 \\
                0&0&\Upsilon_b
              \end{pmatrix} \begin{pmatrix}
  \tilde{z}(\cdot,t) \\
  \check{z}(\cdot,t) \\
  \tilde{\zeta}(t)
\end{pmatrix}.
 \end{equation}
A direct computation shows that  such a defined
 $( {z}(\cdot,t),\hat{z}(\cdot,t), {\zeta}(t))\in C( 0,\infty ;\H^2\times\R)$
is governed by
\begin{equation}\label{2023952148}
  \left\{\begin{array}{l}
   \disp  z_t(x,t)=z_{xx}(x,t),   \crr
  \disp  z_x(0,t)=
    -qz(0,t) , \ z_x(1,t)=b\zeta(t)u_0(t)-v_x(1,t),\crr
  \hat{z}_{t}(x,t)=\hat{z}_{xx}(x,t)   , \; x\in (0,1),
 \crr
 \hat{z}_{x} (0,t)=-q {z}(0,t),\  \
\hat{z}_{x}(1,t)= u_0(t)-v_x(1,t)+c_1 [z(1,t)-\hat{z}(1,t)] ,    \crr
 \dot{\zeta}(t)= -\mbox{sgn}(b)[z(1,t)-\hat{z}(1,t)]u_0(t) ,\crr
 u_0(t) \disp =   \disp  -(q+c_0)\left[  \hat{z}(1,t)+q \int_{0}^{1}e^{q(1-x)}\hat{z}(x,t)dx\right]+ v_x(1,t).
        \end{array}\right.
\end{equation}

Let
  \begin{equation}\label{202396916}
w(x,t)=z(x,t)+v(x,t), \ \ x\in[0,1],\ t\geq0,
\end{equation}
where $v$  is given by \dref{2023829858}.
By using \dref{2023829858}, \dref{20221131932}  and \dref{2023952148},
  a straightforward computation shows that
$( w(\cdot,t),\hat{z}(\cdot,t), {\zeta}(t))\in C(0,\infty;\H^2\times\R)$
is a weak solution of the closed-loop system \dref{2023829945}.
Note that
\begin{equation}\label{2023961037}
w(0,t)-r(t)=  w(0,t)-v(0,t)=z(0,t)=\hat{z}(0,t)+\tilde{z}(0,t)=
   \check{ z}(0,t)+\tilde{z}(0,t),
\end{equation}
   \dref{2023829954}  holds due to \dref{2023961016}.
Furthermore,  the boundedness  \dref{2023829956} follows from \dref{202396916}, \dref{2023952235}, \dref{2023961052} and \dref{2023952251}.

 %By  \dref{2023829945}, \dref{2023813837} and \dref{2023952235}, we obtain
%   \begin{equation}\label{2023961108}
%\begin{array}{rl}
%   u_0(t) &\disp =   \disp  -(q+c_0)\left[  \hat{z}(1,t)+q \int_{0}^{1}e^{q(1-x)}\hat{z}(x,t)dx\right]+ v_x(1,t)\crr
%   &\disp =   \disp  -(q+c_0) \check{z}(1,t) + v_x(1,t).
% \end{array}
% \end{equation}
 When the reference $r$ satisfies \dref{202396903},  $d(t)=v_x(1,t)$  satisfies  \dref{202381809*d}. Hence, \dref{2023952252} holds due to Lemma \ref{Lm2023952250}.
By recalling \dref{2023951821} and \dref{2023951826},  it follows that $\zeta(t)=\Upsilon_b\tilde{\zeta}(t)= \frac1b- \tilde{\zeta}(t)$
 %  \begin{equation}\label{2023828927d}
% \zeta(t)=\Upsilon_b\tilde{\zeta}(t)= \frac1b- \tilde{\zeta}(t),
%\end{equation}
  which, together with  \dref{2023952252}, leads to the convergence \dref{202396902} easily.

\section{Numerical simulations}\label{NumSim}

In this section, we  present   numerical simulations  for  systems \dref{2023720848} and  \dref{2023829945constant} to validate  the  theoretical    results. The finite difference scheme is adopted in discretization.
The numerical results are programmed in Matlab. The time step and the space step are
taken as $0.0001$ and $0.02$, respectively.
The reference signal  and the  corresponding parameters  are chosen  as
 \begin{equation}\label{20181211918}
 \left.\begin{array}{l}
\disp r^* =3,\  c_0=c_1=5,\  q=2,\  b=-10.
\end{array}\right.
\end{equation}
In both cases, the initial state is chosen as
\begin{equation}\label{2023962257}
w(x,0)=qx-1, \ \hat{w}(x,0)=\hat{z}(x,0)=0, \ \zeta(0)=0.
\end{equation}

The solution  of system   \dref{2023720848} is
 plotted in
Figure \ref{Fig1}.
From Figure \ref{Fig1} we   see that
the system state has been  stabilized effectively, which shows that our controller works well.
Both the
  state  observer and the  control coefficient update law
  are convergent. However, $\zeta(t)$ has not converged to $\frac1b$. Hence, the numerical simulation is  consistent  with our theoretical results in Theorem \ref{th20237141510} and Remark \ref{Re2023831}.
  The tracking result of system
  \dref{2023829945constant} is  plotted in
  Figure \ref{Fig2} which shows that the performance output converges to the constant reference
smoothly. Since  the   persistent excitation
condition is satisfied, the estimate $\zeta$ converges to $\frac1b$ as $t\to+\infty$.  All the state of system
  \dref{2023829945constant} are bounded.

\begin{figure}[!htb]\centering
\subfigure[$w(x,t)$.]
  {\includegraphics[width=0.32\textwidth]{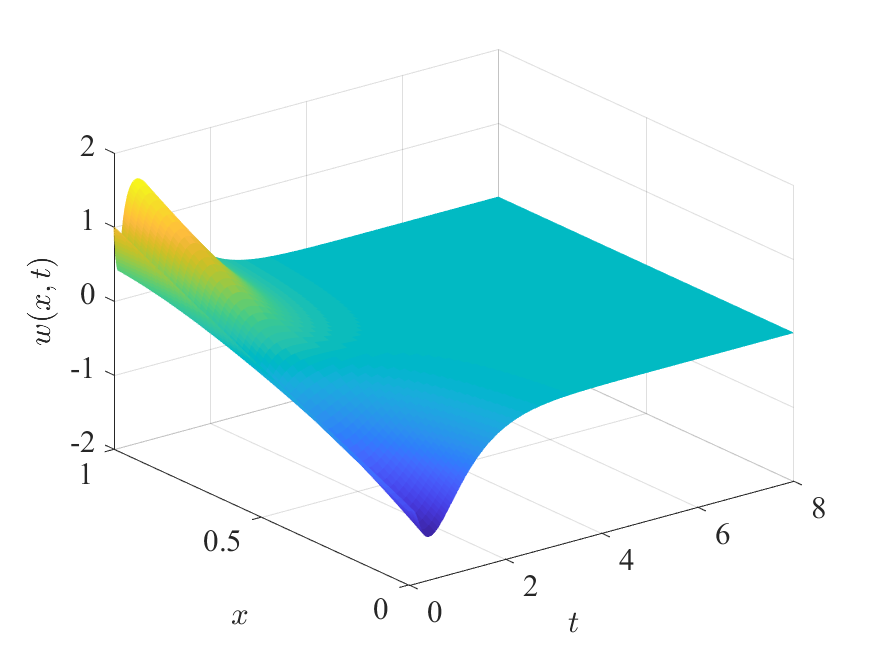}}
\subfigure[ $\hat{w}(x,t)$.]
  {\includegraphics[width=0.32\textwidth]{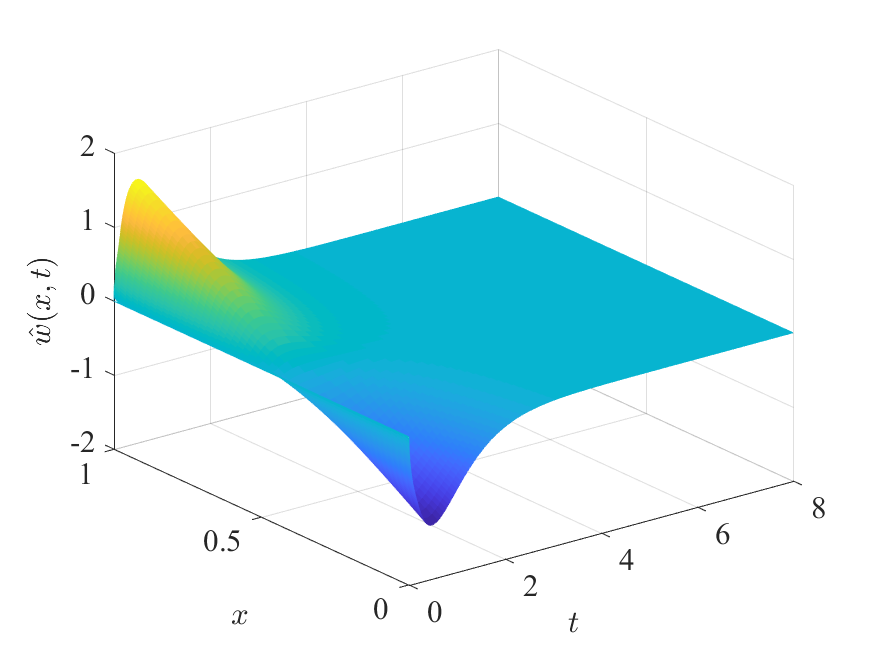}}
\subfigure[Control coefficient estimation.]
 {\includegraphics[width=0.32\textwidth]{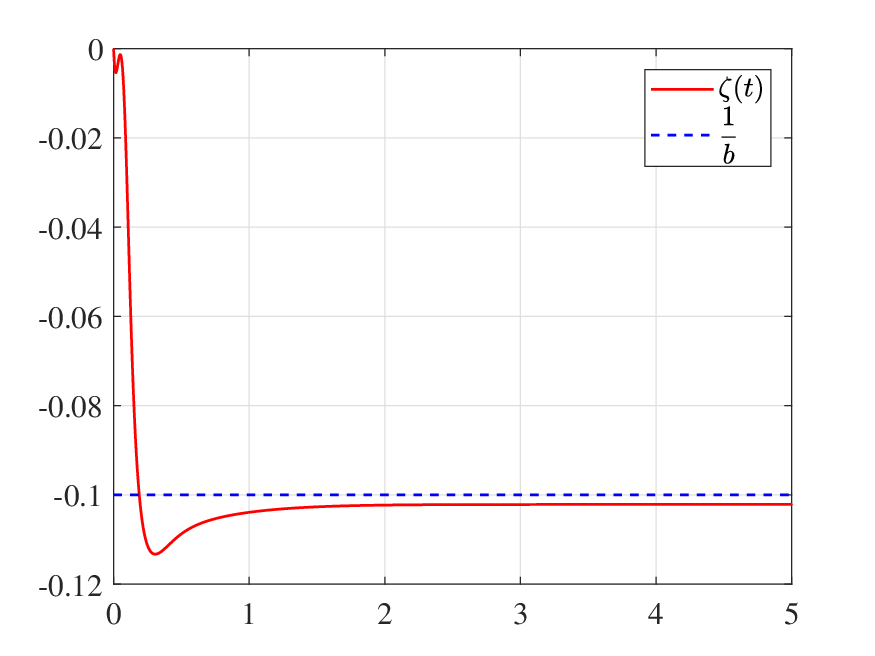}}
 \caption{Simulations for system \dref{2023720848}.}\label{Fig1}
\end{figure}

\begin{figure}[!htb]\centering
\subfigure[$w(x,t)$.]
% {\includegraphics[width=1.5in,height=3cm]{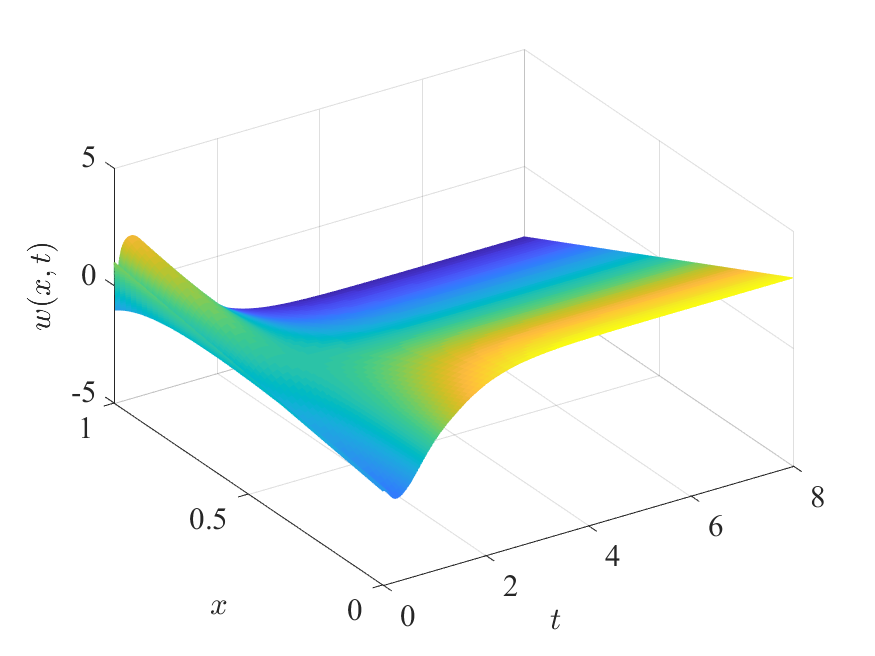}}
 %{\includegraphics[width=0.45\textwidth,height=0.3\textheight]{w}}
  {\includegraphics[width=0.48\textwidth]{w}}
\subfigure[ $\hat{z}(x,t)$.]
 %{\includegraphics[width=1.5in,height=3cm]{phi}}
  {\includegraphics[width=0.48\textwidth]{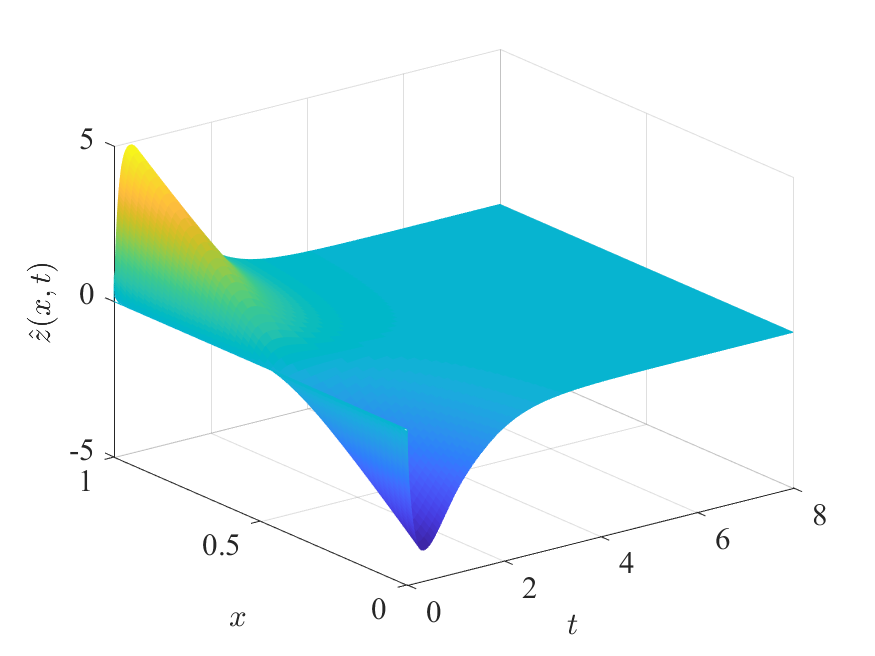}}\\
\subfigure[Control coefficient estimation.]
 {\includegraphics[width=0.48\textwidth]{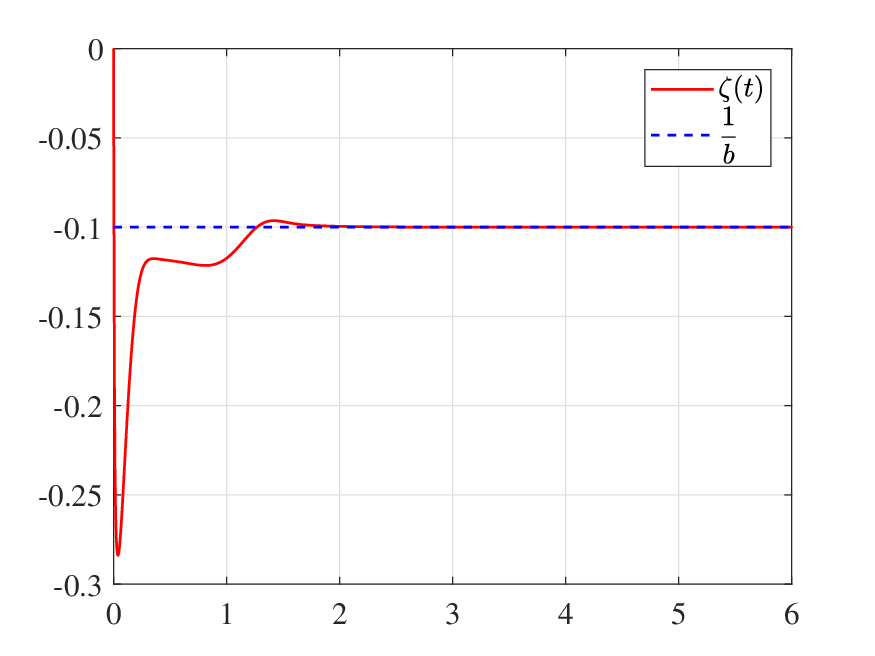}}
\subfigure[Output tracking.]
 {\includegraphics[width=0.48\textwidth]{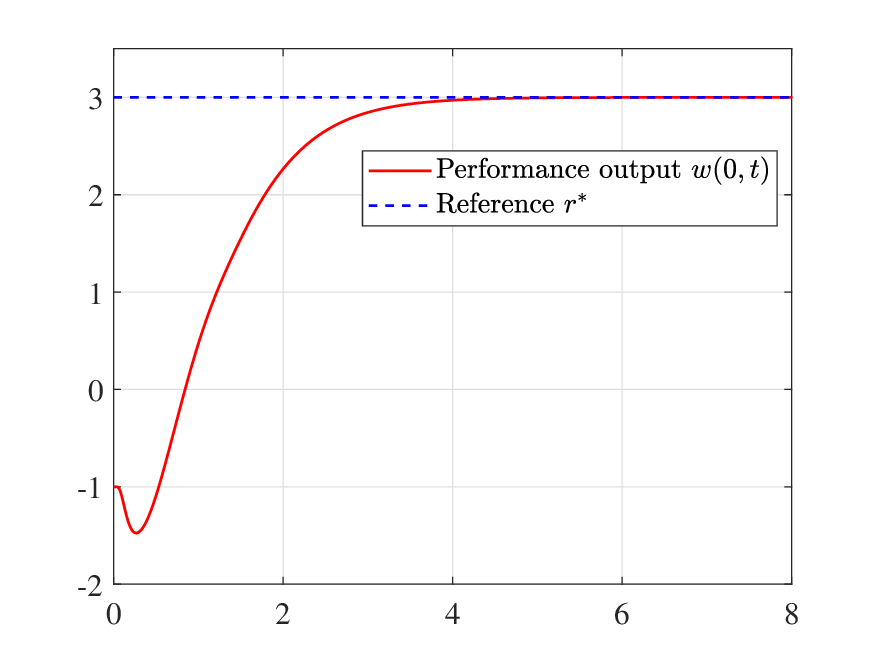}}
\caption{Simulations for system \dref{2023829945constant}.}\label{Fig2}
\end{figure}
 \section{Conclusions}\label{Concluding}
This paper
  presents a new adaptive control scheme for the
  unstable heat equation \dref{20237122035}.
 Since  the control
   coefficient is unknown, the conventional partial differential equation backstepping
  can not be used directly.  We overcome this
 obstacle by dividing the controller into two factors. One of the factors is used to compensate for
  the
 unknown control coefficient and the other  factor is used to stabilize the system.
 A state  observer,  where  both the unknown
  control coefficient and its corresponding estimate are absent,   is
  designed  to estimate the system state,  while a  new update law is proposed to estimate
   the reciprocal of   control coefficient.
 Very importantly,  the convergence of the state estimation  does not rely on the convergence of
the control coefficient estimation. This implies  that the  conventional controller design based on the
 separation principle
 is not trivial  since  we always do not know
the effective estimation of the unknown
control coefficient before the controller design.

%As one novelty  of this paper, we design the controller for the state observer
% rather than the control plant itself.

In summary, the problems that are caused by the unknown control
coefficient are addressed successfully  via stabilizing the unstable heat equation.
The successful  controller design  is mainly  attributed to  two   innovative ideas: (1) we  design the update law to estimate the    reciprocal of $b$
rather than to estimate the control coefficient $b$ itself; (2)
 we design the  controller for the observer rather than the control  plant itself.
Inspired by these ideas, the proposed approach can also be extended straightforwardly   to stabilize other distributed parameter systems with unknown control coefficient such as the wave equation and the beam equation.

%\noindent {\bf Hongyinping Feng }  received the B.Sc. degree, the
%M.Sc. degree,   and the Ph.D. degree, in mathematics  in 2003, 2006,
%2013, respectively, all from Shanxi University, P.R. China. He was a
%visiting scholar  at University of the Witwatersrand, South Africa
%from 2013 to 2014. He is currently a lecturer in the School of
%Mathematics Science, Shanxi University, P.R. China. His research
%interests focus on distributed parameter systems control.
%\noindent {\bf Xiao-Hui Wu} received the B.Sc. degree in
%mathematics from Shanxi University, Taiyuan,
%Shanxi, P.R.China in 2014, and is currently pursuing the Ph.D degree at Shanxi University,Taiyuan,
%Shanxi, P.R.China. His current research interests focus on distributed
%parameter systems control.

\section{Appendix}

\begin{lemma}\label{Lm20231004}
 Let $\omega>0$.   Suppose that the  functions $\gamma_1,\gamma_2\in L^2(0, \infty)$ and  $\gamma_3\in L^{\infty}(0,\infty)$. If the nonnegative
 %, absolutely continuous
 function $\eta$ satisfies
the differential inequality
%   Then, for any initial data $\eta_0\in\R$, the
%      solution  of   ODE     system
    \begin{equation}\label{202310041509}
\dot{\eta}(t)\leq -\omega\eta(t)+[\gamma_1(t)+\gamma_3(t)]\gamma_2(t) ,
\end{equation}
   then    $\eta(t)\to 0$ as $t\to+\infty$.
  \end{lemma}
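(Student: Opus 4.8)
The plan is to use Duhamel's formula to write $\eta$ explicitly in terms of the forcing term, and then to estimate each piece using the decaying exponential kernel together with the hypotheses $\gamma_1,\gamma_2\in L^2(0,\infty)$ and $\gamma_3\in L^\infty(0,\infty)$. First I would set $g(t)=[\gamma_1(t)+\gamma_3(t)]\gamma_2(t)$ and observe that, by Cauchy--Schwarz, $\gamma_1\gamma_2\in L^1(0,\infty)$ while $\gamma_3\gamma_2\in L^2(0,\infty)$; thus $g$ splits as a sum of an $L^1$ function and an $L^2$ function. From the differential inequality \dref{202310041509} and Gronwall's inequality one gets, for $t\ge 0$,
\begin{equation}\label{202310041510}
0\le \eta(t)\le e^{-\omega t}\eta(0)+\int_0^t e^{-\omega(t-s)}g(s)\,ds.
\end{equation}
The first term clearly tends to $0$, so everything reduces to showing that $(K*g)(t):=\int_0^t e^{-\omega(t-s)}g(s)\,ds\to 0$.

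For the $L^1$ part $g_1=\gamma_1\gamma_2$, I would split the integral at $t/2$: on $[t/2,t]$ the tail $\int_{t/2}^{\infty}|g_1(s)|\,ds\to0$ bounds the contribution, while on $[0,t/2]$ the kernel is bounded by $e^{-\omega t/2}$, which kills the (finite) mass $\|g_1\|_{L^1}$. For the $L^2$ part $g_2=\gamma_3\gamma_2$, the cleanest route is to note that $K*g_2$ is exactly the mild solution of $\dot\psi=-\omega\psi+g_2$, $\psi(0)=0$, and that the operator $g_2\mapsto K*g_2$ maps $L^2(0,\infty)$ into the space of functions tending to $0$ at infinity; concretely, $\|K*g_2\|_{L^2}\le\frac1\omega\|g_2\|_{L^2}$ and $\frac{d}{dt}(K*g_2)=-\omega(K*g_2)+g_2\in L^2$, so $K*g_2$ and its derivative are both in $L^2(0,\infty)$, whence $K*g_2\to0$. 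Alternatively one repeats the split-at-$t/2$ argument using Cauchy--Schwarz on $[t/2,t]$ (the kernel is in $L^2$, the tail of $g_2$ in $L^2$ is small) and the exponentially small kernel on $[0,t/2]$.

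I do not expect a genuine obstacle here; this is a standard ``$L^1+L^2$ forcing into a stable first-order filter'' estimate. The only point requiring a little care is handling the mixed product structure $[\gamma_1+\gamma_3]\gamma_2$ correctly --- one must not try to treat $\gamma_1\gamma_2$ as $L^2$ (it is only $L^1$) nor $\gamma_3\gamma_2$ as $L^1$ (it is only $L^2$), so the two summands must be estimated by different arguments as above. Everything else is routine.
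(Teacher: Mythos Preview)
Your proposal is correct and follows essentially the same approach as the paper: both arguments apply the Duhamel/Gronwall bound \dref{202310041510}, decompose the forcing into an $L^1$ piece plus an $L^2$ piece, and then kill each convolution with the exponential kernel via a tail-splitting argument (the paper uses a fixed cut $t_0$ and H\"older on the $L^2$ part, you use $t/2$ and offer the equivalent $H^1$ alternative). The only cosmetic difference is the form of the split---the paper bounds $\gamma_1\gamma_2$ by $\tfrac12\gamma_1^2+\tfrac12\gamma_2^2\in L^1$ via Cauchy's inequality, whereas you place $\gamma_1\gamma_2\in L^1$ directly via H\"older---but the resulting $L^1+L^2$ structure and the subsequent estimates are the same.
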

\begin{proof}
By the Cauchy's inequality,
  \begin{equation*}\label{202310041359}
  |[\gamma_1(t)+\gamma_3(t)]\gamma_2(t)|\leq \frac{1}{2}\gamma_1^2(t)+\frac{1}{2}\gamma_2^2(t)+ |\gamma_2 (t)\gamma_3 (t)|.
  \end{equation*}
 If we let  $   g_1(t)=  |\gamma_2 (t)\gamma_3 (t)|$,
 $   g_2(t)=\frac{1}{2}\gamma_1^2(t)+\frac{1}{2}\gamma_2^2(t)$,
 then $g_1\in L^2(0,\infty)$ and $g_2\in L^1(0,\infty)$. Furthermore,
   \begin{equation}\label{202310011537}
\disp \dot{\eta} (t)\leq  -\omega\eta (t)+g_1(t)+g_2(t).
\end{equation}
% Letting   with $\eta(t)=\eta_1(t)+\eta_2(t)$, we divide  \dref{202310011537}  into two   systems:
%    \begin{equation}\label{202381919A}
%    \left.\begin{array}{l}
%\disp \dot{\eta}_j(t)\leq -\omega\eta_j(t)+g_j(t),
%%\disp \dot{\eta}_2(t)= -\omega\eta_2(t)+g_2(t).
%\end{array}\right.\ \ j=1,2.
%\end{equation}
  % Note that   \dref{20238231205} and \dref{2023941442f}, we have $f\in L^p(0,\infty)$, $p=1$ or $2$.
  Since $g_1\in L^2(0,\infty)$,
for any $\sigma >0$,   there exists a  $t_1>0$ such that $\|g_1\|_{L^2(t_1,\infty )}<\sigma$.
%\begin{equation}\label{2016112491020}
%\left.\begin{array}{l}
%\disp  \|g_1\|_{L^2(t_1,\infty )}<\sigma.
%\end{array}\right.
%\end{equation}
 It follows from the H\"{o}lder's inequality that
 \begin{equation}\label{20239202222}
 \int_{t_1}^te^{-\omega(t-s)} g_1 (s)ds  \leq \left( \int_{t_1}^te^{-2\omega(t-s)}ds\right)^{1/2}
\left( \int_{t_1}^tg_1^2  (s)ds \right)^{1/2}\leq \frac{\|g_1\|_{L^{2}(t_1,\infty  )}}{\sqrt{2\omega}}
 <  \frac{\sigma}{\sqrt{2\omega}}.
\end{equation}
   Since $g_2\in L^1(0,\infty)$,
for any $\sigma >0$,   there exists a  $t_2>0$ such that $\|g_2\|_{L^1(t_2,\infty)}<\sigma$,
%\begin{equation}\label{2016112491020}
%\left.\begin{array}{l}
%\disp  \|g_2\|_{L^1(t_2,\infty)}<\sigma,
%\end{array}\right.
%\end{equation}
which means that
 \begin{equation}\label{APPP6}
 \int_{t_2}^te^{-\omega(t-s)} g_2 (s)ds  \leq  \int_{t_2}^t
g_2  (s)ds
 \leq  \|g_2\|_{L^{1}(t_2,\infty  )}<\sigma.
\end{equation}
Choosing $t_0=\max\{t_1,t_2\}$ and taking \dref{20239202222} and \dref{APPP6} into account,  we solve   \dref{202310011537} to get
  \begin{eqnarray}\label{APPP4}
  \begin{array}{ll}
  \eta ( t)
&\disp \leq  e^{-\omega  t}  \eta (0)
+e^{-\omega(t-t_0)}\int_0^{t_0}e^{-\omega(t_0-s)} [ g_1  (s)+g_2(s)]ds
 +  \int_{t_0}^te^{-\omega(t-s)}  [ g_1  (s)+g_2(s)]ds\crr
 &\disp \leq  e^{-\omega  t}  \eta (0)
+e^{-\omega(t-t_0)}\int_0^{t_0}e^{-\omega(t_0-s)} [ g_1  (s)+g_2(s)]ds
 +   \left(\frac{1}{\sqrt{2\omega}}+1\right)\sigma.
 \end{array}
\end{eqnarray}
 Passing to the limit as $t \to+\infty $ in \dref{APPP4}, we obtain
 \begin{eqnarray}\label{APPP5B}
 \lim_{t\to+\infty} \eta (t)\leq  \left(\frac{1}{\sqrt{2\omega}}+1\right)\sigma,
\end{eqnarray}
 which leads to  $\eta(t)\to 0$ as $t\to+\infty$   due to   the arbitrariness of $\sigma$.
 \end{proof}

\end{document}